\def\rr{{\mathbb R}}
\def\rn{{{\rr}^n}}
\def\zz{{\mathbb Z}}
\def\cc{{\mathbb C}}
\def\nn{{\mathbb N}}
\def\ca{{\mathcal A}}
\def\ccc{{\mathcal C}}
\def\cd{{\mathcal D}}
\def\cm{{\mathcal M}}
\def\mr{{\mathcal R}}
\def\cs{{\mathcal S}}
\def\cx{{\mathcal X}}
\def\mi{{\mathrm I}}
\def\mj{{\mathrm J}}
\def\mh{{\mathrm H}}
\def\fz{\infty}
\def\az{\alpha}
\def\bz{\beta}
\def\dz{\delta}
\def\bdz{\Delta}
\def\ez{\epsilon}
\def\gz{{\gamma}}
\def\bgz{{\Gamma}}
\def\lz{\lambda}
\def\blz{\Lambda}
\def\boz{{\Omega}}
\def\tz{\theta}
\def\sz{\sigma}
\def\lf{\left}
\def\r{\right}
\def\la{\langle}
\def\ra{\rangle}
\def\hs{\hspace{0.25cm}}
\def\ls{\lesssim}
\def\gs{\gtrsim}
\def\pa{\partial}
\def\ov{\overline}
\def\noz{\nonumber}
\def\wz{\widetilde}
\def\wh{\widehat}
\def\ev{\equiv}
\def\st{\subset}
\def\com{\complement}
\def\bh{\backslash}
\def\gfz{\genfrac{}{}{0pt}{}}
\def\iint{\int\hspace{-0.2cm}\int}
\def\dist{\mathop\mathrm{\,dist\,}}
\def\supp{\mathop\mathrm{\,supp\,}}
\def\cro{\ccc_\rho}
\def\mz{{\frac n2(\frac 1{p_\Phi^-}-\frac12)}}
\def\mzx{{\frac n2(\frac 1{\wz p_\Phi}-\frac12)}}
\def\hl{{\mathrm H}_l}
\def\rb{{\rho(\mu(B))[\mu(B)]^{1/2}}}
\def\rkb{{\rho(\mu(2^kB))[\mu(2^kB)]^{1/2}}}
\def\dxt{\,\frac{d\mu(x)\,dt}{t}}
\def\dyt{\,\frac{d\mu(y)\,dt}{t}}
\def\dt{\,\frac{dt}{t}}
\def\ds{\,\frac{ds}{s}}
\def\cub{\chi_{U_k(B)}}
\def\lp{{L^p(\cx)}}
\def\lx{{L^\Phi(\cx)}}
\def\xt{{\cx\times(0,\fz)}}
\def\ml{{\cm^{M,\ez}_\Phi(L)}}
\def\mlx{{\cm^{M,\ez}_\Phi(L^*)}}
\def\mly{{\cm^{M_1}_{\Phi,L}(\cx)}}
\def\tx{{T_\Phi(\cx)}}
\def\txx{{(T_\Phi(\cx))^*}}
\def\txz{{T_\Phi^\fz(\cx)}}
\def\txv{{T_{\Phi,{\rm v}}^\fz(\cx)}}
\def\txl{{T_{\Phi,0}^\fz(\cx)}}
\def\txy{{T_{\Phi,1}^\fz(\cx)}}
\def\txb{{T_{2,b}^2(\cx)}}
\def\txe{{T_{2}^2(\cx)}}
\def\ttx{{\wz T_\Phi(\cx)}}
\def\dfb{\dz_k(f,B)}
\def\tml{(t^2L)^Me^{-t^2L}}
\def\tmlx{(t^2L^*)^Me^{-t^2L^*}}
\def\tmy{(t^2L)^{M_1}e^{-t^2L}}
\def\Iem{\lf(I-e^{-r_B^2L}\r)^M}
\def\Ilm{\lf(I-[I+r_B^2L]^{-1}\r)^M}
\def\iemx{(I-e^{-r_B^2L^{\ast}})^M}
\def\Iemx{\lf(I-e^{-r_B^2L^{\ast}}\r)^M}
\def\tl{t^2Le^{-t^2L}}
\def\tlx{t^2L^*e^{-t^2L^*}}
\def\vy{{V_{k,1}(B)}}
\def\ve{{V_{k,2}(B)}}
\def\pme{(\Phi,\,M,\,\ez)_L}
\def\pml{(\Phi,\,\wz M,\,\ez)_{L^*}}
\def\pmx{(\Phi,\,M,\,\ez)_{L^*}}
\def\bmol{{\mathop\mathrm{\,BMO}^M_{\rho,L}(\cx)}}
\def\vmol{{\mathop\mathrm{\,VMO}^M_{\rho,L}(\cx)}}
\def\tvmol{{\mathop\mathrm{\wz{\,VMO}}^M_{\rho,L}(\cx)}}
\def\vmo{{\mathop\mathrm{\,VMO}_{\rho,L}(\cx)}}
\def\bmo{{\mathop\mathrm{\,BMO}_{\rho,L}(\cx)}}
\def\bmox{{\mathop\mathrm{\,BMO}_{\rho,L^*}(\cx)}}
\def\bmoxx{{(\bmox)^*}}
\def\vmox{{(\vmo)^*}}
\def\hx{{H_{\Phi,L}(\cx)}}
\def\hxx{{H_{\Phi,L^*}(\cx)}}
\def\hmfl{{H_{\Phi,\mathrm {fin},L}^{\mathrm {mol},\ez, M}(\cx)}}
\def\hmx{{H_{\Phi,L}^{M,\ez}(\cx)}}
\def\bmx{{B_{\Phi,L}^{M,\ez}(\cx)}}
\def\byx{{B_{\Phi,L}^{M_1,\ez_1}(\cx)}}
\def\bx{{B_{\Phi,L}(\cx)}}
\def\bxx{{B_{\Phi,L^*}(\cx)}}
\def\plm{\pi_{L,M}}
\def\ply{\pi_{L,1}}
\newtheorem{theorem}{Theorem}[section]
\newtheorem{lemma}{Lemma}[section]
\newtheorem{proposition}{Proposition}[section]
\newtheorem{remark}{Remark}[section]
\newtheorem{definition}{Definition}[section]
\numberwithin{equation}{section}
\begin{document}
\arraycolsep=1pt

\title{{\vspace{-5cm}\small\hfill\bf Kyoto J. Math. (to appear)}\\
\vspace{4.5cm}\Large Vanishing Mean Oscillation Spaces Associated with
Operators Satisfying Davies-Gaffney Estimates
\footnotetext{\hspace{-0.35cm} 2010 {\it
Mathematics Subject Classification}. Primary 42B35; Secondary 42B30, 46E30, 30L99.
\endgraf {\it Key words and phrases}. metric measure space,
operator, bounded $H_\infty$ functional
calculus, Davies-Gaffney estimate, Orlicz function, Orlicz-Hardy space,
BMO, VMO, molecule, dual.
\endgraf The second author is supported by the National
Natural Science Foundation (Grant No. 10871025) of China
and Program for Changjiang Scholars and Innovative
Research Team in University of China.}}
\author{Yiyu Liang, Dachun Yang and Wen Yuan\,\footnote{Corresponding author}}
\date{ }
\maketitle

\vspace{-0.5cm}

\begin{center}
\begin{minipage}{13cm}
{\small {\bf Abstract}\quad Let $(\mathcal{X}, d, \mu)$ be a metric measure
space, $L$ a linear operator which has a bounded $H_\infty$ functional
calculus and satisfies the Davies-Gaffney estimate, $\Phi$ a concave
function on $(0,\infty)$ of critical lower type $p_\Phi^-\in(0,1]$
and $\rho(t)\equiv t^{-1}/\Phi^{-1}(t^{-1})$ for all $t\in(0,\infty)$.
In this paper, the authors introduce the generalized VMO space
${\mathrm {VMO}}_{\rho,L}({\mathcal X})$ associated with $L$, and establish
its characterization via the tent space. As applications, the authors show that
$({\mathrm {VMO}}_{\rho,L}({\mathcal X}))^*=B_{\Phi,L^*}({\mathcal X})$,
where $L^*$ denotes the adjoint operator of $L$ in $L^2({\mathcal X})$ and
$B_{\Phi,L^*}({\mathcal X})$ the Banach completion of the Orlicz-Hardy space
$H_{\Phi,L^*}({\mathcal X})$.
}
\end{minipage}
\end{center}

\vspace{0.2cm}

\section{Introduction}\label{s1}

\hskip\parindent John and Nirenberg \cite{jn61} introduced the {\it
space ${\mathrm {BMO}\,}(\rn)$}, which is defined to be the space of
all $f\in L_{\rm loc}^1(\rn)$ such that
$$\|f\|_{\rm BMO\,(\rn)}\ev\sup_{{\rm ball}\ B\st\rn}\frac1{|B|}
\int_B|f(x)-f_B|\,dx<\fz,$$
where and in what follows, $f_B\ev\frac1{|B|}\int_Bf(x)\,dx$. The
space ${\mathrm {BMO}\,}(\rn)$ was proved to be the dual of the Hardy space
$H^1(\rn)$ by Fefferman and Stein in \cite{fs72}.

Sarason \cite{s75} introduced the {\it space ${\mathrm {VMO}\,}(\rn)$}, which
is defined to be the space of all $f\in{\mathrm {BMO}\,}(\rn)$ such that
$$\lim_{c\to0}\sup_{\gfz{\mathrm{ball}\,B\st\rn}{r_B\le c}}\frac1{|B|}
\int_B|f(x)-f_B|\,dx=0,$$
where $r_B$ denotes the radius of the ball $B$.
In order to represent $H^1(\rn)$ as a dual
space, Coifman and Weiss \cite{cw77} introduced the {\it space
${\mathrm {CMO}\,}(\rn)$}, which is defined to be the closure of all
infinitely differentiable functions with compact support in the
${\mathrm {BMO}\,}(\rn)$ norm and was originally denoted by the \emph{symbol}
${\mathrm {VMO}\,}(\rn)$ in \cite{cw77}, and proved that $({\mathrm
{CMO}\,}(\rn))^*=H^1(\rn)$. For more properties of ${\mathrm
{BMO}\,}(\rn)$, ${\mathrm {VMO}\,}(\rn)$ and ${\mathrm
{CMO}\,}(\rn)$, we refer the reader to Janson \cite{j80} and
Bourdaud \cite{b02}.

Let $L$ be a \emph{linear operator} in $L^2(\rn)$ that generates an
analytic semigroup $\{e^{-tL}\}_{t\ge0}$ with kernels satisfying an
\emph{upper bound of Poisson type}. The Hardy space $H_L^1(\rn)$,
the BMO space ${\mathrm {BMO}}_L(\rn)$ and Morrey spaces
associated with $L$ were
introduced and studied in \cite{adm,dy05b,dxy07}. Duong and Yan
\cite{dy05a} further proved that $(H_L^1(\rn))^*={\mathrm
{BMO}}_{L^*}(\rn)$, where and in what follows, $L^*$ denotes the
{\it adjoint operator} of $L$ in $L^2(\rn)$. Moreover, recently,
Deng et al. \cite{ddsty} introduced the \emph{space} ${\mathrm
{VMO}}_L(\rn)$, the space of vanishing mean oscillation associated
with operator $L$, and proved that $({\mathrm
{VMO}}_L(\rn))^*=H_{L^*}^1(\rn)$ and also
$${\mathrm {VMO}}_\Delta(\rn) ={\mathrm {CMO}}(\rn)={\mathrm
{VMO}}_{\sqrt\bdz}(\rn)$$
with equivalent norms. Let $\Phi$ on
$(0,\fz)$ be a continuous, strictly increasing, subadditive function
of upper type $1$ and of critical lower type $p_\Phi^-\le 1$
but near to $1$ (see Section \ref{s2.4} below for the definition).
Let $\rho(t)\equiv t^{-1}/\Phi^{-1}(t^{-1})$ for all $t\in(0,\fz)$.
A typical example of such Orlicz functions is $\Phi(t)\equiv t^p$
for all $t\in(0,\fz)$ and $p\le 1$ but near to $1$. Jiang and Yang
\cite{jya} introduced the VMO-type space ${\mathrm
{VMO}}_{\rho,L}(\rn)$ and proved that the dual space of ${\mathrm
{VMO}}_{\rho, L^*}(\rn)$ is the space $B_{\Phi, L}(\rn)$, where
$B_{\Phi,L}(\rn)$ denotes the \emph{Banach completion} of the Orlicz-Hardy space
$H_{\Phi,L}(\rn)$ in \cite{jyz09}.

Let $L$ be a \emph{second order divergence form elliptic operator}
with complex bounded measurable coefficients and $\Phi$ a \emph{continuous,
strictly increasing, concave function} of
critical lower type $p_\Phi^-\in (0,1]$.
Jiang and Yang \cite{jy10a} studied the VMO-type spaces
${\mathrm {VMO}}_{\rho,L}(\rn)$ and proved that
the dual space of ${\mathrm {VMO}}_{\rho, L^*}(\rn)$ is
the space $B_{\Phi, L}(\rn)$, where $B_{\Phi, L}(\rn)$
denotes the \emph{Banach completion} of the Orlicz-Hardy space
$H_{\Phi,L}(\rn)$ in \cite{jy10}. (We remark that
the \emph{assumptions on $p_\Phi$ in \cite{jy10, jy10a} can
be relaxed into the same assumptions on
$p_\Phi^-$}; see Remark \ref{r2.2}(ii) below.)
In particular, when $\Phi(t)
\equiv t$ for all $t\in (0,\fz)$, then $\rho(t)\ev1$ and
$({\mathrm {VMO}}_{1,L}(\rn))^*=H_{L^*}^1(\rn)$, which was also
independently obtained by Song and Xu \cite{sx},
where $H_{L^*}^1(\rn)$ denotes the Hardy space first
introduced by Hofmann and Mayboroda \cite{hm09} (see also
\cite{hm09c}).

Let $(\cx,d)$ be a \emph{metric space endowed with a doubling measure} $\mu$ and
$L$ a \emph{non-negative self-adjoint operator satisfying Davies-Gaffney
estimates}. Hofmann et al. \cite{hlmmy} introduced the Hardy space
$H_L^1(\cx)$ associated to $L$. Jiang and Yang \cite{jy} further
introduced the Orlicz-Hardy space $\hx$. Anh \cite{a10} studied the
VMO space ${\mathrm {VMO}}_L(\cx)$ associated to $L$ and proved that
the dual space of ${\mathrm {VMO}}_L(\cx)$ is the Hardy space
$H_L^1(\cx)$. Recently, Duong and Li \cite{dl} observed  that the
assumption ``$L$ is a non-negative self-adjoint operator'' in
\cite{hlmmy} can be replaced by a weaker assumption that ``\emph{$L$ has a
bounded $H_\fz$ functional calculus on $L^2(\cx)$}'' and introduced
the Hardy space $H_L^p(\cx)$ with $p\in (0,1]$, which was further
generalized by Anh and Li \cite{al11} to the Orlicz-Hardy spaces
$\hx$.

From now on, we always assume that {\it $L$ is a linear operator
which has a bounded $H_\fz$ functional calculus and satisfies
Davies-Gaffney estimates} and that {\it $\Phi$ is a continuous,
strictly increasing, concave function of critical lower
type $p_\Phi^-\in (0,1]$}. In this paper, we introduce the generalized
VMO space $\vmo$ associated with $L$, and establish its characterization
via the tent space in \cite{jy}. Then, we further prove that $\vmox=\bxx$,
where $\bxx$ denotes the \emph{Banach completion} of the Orlicz-Hardy space
$\hxx$ in \cite{al11}. When $\Phi(t)\equiv t$ for all $t\in(0,\fz)$,
we denote $\vmo$ simply by $\mathrm{\,VMO}_{L}(\cx)$. As a special
case of the main results in this paper, we show that
$({\mathrm{\,VMO}_{L}(\cx)})^*=H_{L^*}^1(\cx)$, which,
when $L$ is nonnegative self-adjoint, was already obtained by Anh \cite{a10}.

Precisely, the paper is organized as follows. In Section \ref{s2},
we recall some known notions and notation concerning metric measure
spaces $\cx$, then describe some basic assumptions on the considered
operator $L$ and the Orlicz function $\Phi$ and present some
properties of the operator $L$ and the Orlicz function $\Phi$ considered in
this paper.

In Section \ref{s3}, we first obtain the $\rho$-Carleson measure
characterization (see Theorem \ref{t3.1} below) of the space $\bmo$
in \cite{al11} via first establishing a Calder\'on reproducing formula
(see Proposition \ref{p3.3} below). Differently from
the Calder\'on reproducing formula in \cite[Proposition\,4.6]{jy},
the Calder\'on reproducing formula in
Proposition \ref{p3.3} below holds for
all molecules instead of atoms in \cite{jy}, which
brings us some extra difficulty due to the lack of the
support of molecules.
Then we introduce the generalized VMO space $\vmo$
associated with $L$, and the tent space $\txv$ and establish some
basic properties of these spaces. In particular, we characterize the
space $\vmo$ via $\txv$; see Theorem \ref{t3.4} below. To this end,
we first need make clear the dual relation between $\hxx$ and $\bmo$
(see Theorem \ref{t3.2} below), which is deduced from a technical
result on the optimal representation of finite linear combinations
of molecules (see Theorem \ref{t3.1} below). We remark that variants
of Theorems \ref{t3.1} and \ref{t3.2} below have already been
given respectively in \cite[Theorems 3.15, 3.13 and 3.16]{al11}
without a detailed proof of \cite[Theorem 3.15]{al11}.
We give a detailed proof of Theorem \ref{t3.1} below
which induces \emph{more accurate indices} appearing in
Theorems \ref{t3.1} and \ref{t3.2} below, comparing with
\cite[Theorems 3.13 and 3.15]{al11} (see Remark
\ref{r3.2} below). Moreover, the proof of
Theorem \ref{t3.1} below simplifies the proof of
\cite[Theorem 5.4]{hlmmy} in a subtle way, the proof of
\cite[Theorem 5.4]{hlmmy} strongly depends
on the support of atoms; see Remark \ref{r3.1} below.

In Section \ref{s4}, we first obtain, in Theorem \ref{t4.1} below,
the dual space of the tent space $\txv$ in Definition \ref{d3.4}
below, from which, we further deduce that $\vmox=\bxx$
in Theorem \ref{t4.2} below, where $\bxx$
denotes the \emph{Banach completion} of $\hxx$. In particular, we obtain
$({\mathrm{VMO}_{L}(\cx)})^*=H_{L^*}^1(\cx)$.

Finally we make some conventions on notation. Throughout the whole
paper, we denote by $C$ a {\it positive constant} which is
independent of the main parameters, but it may vary from line to
line. The {\it constant with subscripts}, such as $C_1$, does not
change in different occurrences. We also use $C(\gz,\cdots)$ to
denote {\it a positive constant depending on the indicated
parameters $\gz,$ $\cdots$}. The {\it symbol} $A\ls B$ means that
$A\le CB$. If $A\ls B$ and $B\ls A$, then we write $A\sim B$. We
also set $\nn\equiv\{1,\, 2,\, \cdots\}$ and
$\zz_+\equiv\nn\cup\{0\}$. The {\it symbol $B(x,r)$} denotes the
ball $\{y\in\cx:\ d(x,y)<r\}$; moreover, let $CB(x,r)\equiv
B(x,Cr)$. For a measurable set $E$,
denote by $\chi_{E}$ the {\it characteristic function} of $E$
and by $E^\com$ the \emph{complement} of $E$ in $\cx$.

\section{Preliminaries\label{s2}}

\hskip\parindent In this section, we first recall some notions and
notation on metric measure spaces and then describe some basic
assumptions on the considered operator $L$ in this paper and its
functional calculus; finally, we also present some basic assumptions
and properties on Orlicz functions.

\subsection{Metric measure spaces\label{s2.1}}

\hskip\parindent Throughout the whole paper, let $\cx$ be a \emph{set}, $d$
a \emph{metric} on $\cx$ and $\mu$ a \emph{nonnegative Borel regular measure} on
$\cx$. Moreover, assume that there exists a constant $C_1\ge1$ such
that for all $x\in\cx$ and $r>0$,
\begin{equation}\label{2.1}
V(x,2r)\le C_1V(x,r)<\fz,
\end{equation}
where $B(x,r)\equiv\{y\in\cx:\ d(x,y)<r\}$ and
\begin{equation}\label{2.2}
V(x,r)\equiv \mu(B(x,r)).
\end{equation}

Observe that if $d$ is further assumed to be a quasi-metric, then
$(\cx,d,\mu)$ is called a \emph{space of homogeneous type} in the sense of
Coifman and Weiss \cite{cw71} (see also \cite{cw77}).

Notice that the doubling property \eqref{2.1} implies the following
\emph{strong homogeneity property}: there exist some positive constants $C$
and $n$, depending on $C_1$, such that
\begin{equation}\label{2.3}
V(x,\lz r)\le C\lz^nV(x,r)
\end{equation}
uniformly for all $\lz\ge1$, $x\in\cx$ and $r>0$. The parameter $n$
measures the \emph{dimension} of the space $\cx$ in some sense. Also, there
exist constants $C\in (0,\fz)$ and $N\in [0, n]$, depending on
$C_1$, such that
\begin{equation}\label{2.4}
V(x, r)\le C\lf(1+\frac{d(x,y)}r\r)^NV(y,r)
\end{equation}
uniformly for all $x,y\in\cx$ and $r>0$. Indeed, the property \eqref{2.4}
with $N=n$ is a simple corollary of the strong homogeneity property
\eqref{2.3}. In the cases of Euclidean spaces, Lie groups of polynomial
growth and, more generally, Ahlfors regular spaces, $N$ can be chosen
to be $0$.

In what follows, for any ball $B\subset\cx$, we set
\begin{equation}\label{2.5}
U_0(B)\equiv B\quad {\rm and}\quad U_j(B)\equiv 2^jB\backslash2^{j-1}B
\quad{\rm for\ }j\in\nn.
\end{equation}

The following covering lemma established in \cite[Lemma 2.1]{a10} plays
a key role in the sequel.

\begin{lemma}\label{l2.1}
For any $\ell>0$, there exists $N_\ell\in\nn$,
depending on $\ell$, such that for all balls $B(x_B,\ell r)$,
with $x_B\in\cx$ and $r>0$, there exists a family
$\{B(x_{B, i},r)\}_{i=1}^{N_\ell}$ of balls such that

{\rm i)} $B(x_B,\ell r)\st\cup_{i=1}^{N_\ell}B(x_{B, i},r)$;

{\rm ii)} $N_\ell\le C\ell^n$;

{\rm iii)} $\sum_{i=1}^{N_\ell}\chi_{B(x_{B, i},r)}\le C$.

\noindent Here $C$ is a positive constant independent of $x_B$, $r$ and $\ell$.
\end{lemma}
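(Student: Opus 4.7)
The plan is to produce the family $\{x_{B,i}\}$ by a Vitali/greedy selection and verify the three properties using the doubling condition \eqref{2.1} (equivalently the strong homogeneity \eqref{2.3}). First I would apply Zorn's lemma to choose a maximal collection $\{x_{B,i}\}_{i\in I}\subset B(x_B,\ell r)$ that is $r$-separated, i.e., $d(x_{B,i},x_{B,j})\ge r$ whenever $i\ne j$. The associated balls $\{B(x_{B,i},r/2)\}_{i\in I}$ are then pairwise disjoint. The finiteness of $I$ (and hence the value of $N_\ell$) will be established a posteriori by the volume estimate used to prove (ii).

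For (i), maximality immediately forces that every $y\in B(x_B,\ell r)$ satisfies $d(y,x_{B,i})<r$ for some $i$, since otherwise $y$ could be adjoined to the $r$-separated family, contradicting maximality. This yields the covering $B(x_B,\ell r)\subset\bigcup_i B(x_{B,i},r)$. For (ii), I would argue by volume comparison: assuming without loss of generality that $\ell\ge 1/2$ (the alternative case being trivial), each $B(x_{B,i},r/2)$ is contained in $B(x_B,2\ell r)$, so by disjointness
$$\sum_i \mu(B(x_{B,i},r/2))\le \mu(B(x_B,2\ell r)).$$
The key step is the matching lower bound: since $x_{B,i}\in B(x_B,\ell r)$, one has $B(x_B,2\ell r)\subset B(x_{B,i},3\ell r)$, and \eqref{2.3} (applied at the center $x_{B,i}$) gives $\mu(B(x_B,2\ell r))\le \mu(B(x_{B,i},3\ell r))\le C\ell^n\mu(B(x_{B,i},r/2))$. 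Summing over $i$ and comparing with the previous inequality yields $N_\ell\le C\ell^n$.

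For (iii), fix any $y\in\cx$ and count the indices $i$ with $y\in B(x_{B,i},r)$. For each such $i$, $B(x_{B,i},r/2)\subset B(y,3r/2)$ while $B(y,r)\subset B(x_{B,i},2r)$, so \eqref{2.1} gives $\mu(B(x_{B,i},r/2))\gtrsim \mu(B(y,r))$ with a constant depending only on $C_1$; summing and using disjointness together with doubling bounds the number of such indices by $\mu(B(y,3r/2))/\mu(B(y,r))\le C$. Hence $\sum_{i=1}^{N_\ell}\chi_{B(x_{B,i},r)}(y)\le C$ uniformly in $y$.

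The one subtlety I would flag as the main technical point is the choice of reference center in (ii): applying the doubling inequality at $x_{B,i}$ rather than at $x_B$ (and observing $B(x_B,2\ell r)\subset B(x_{B,i},3\ell r)$) is what produces the clean bound $N_\ell\lesssim \ell^n$. If one instead invoked \eqref{2.4} to transfer between $\mu(B(x_B,\cdot))$ and $\mu(B(x_{B,i},\cdot))$, an additional factor of $(1+\ell)^N$ would enter, giving only the weaker exponent $n+N$. Everything else is the standard greedy-covering package for doubling spaces.
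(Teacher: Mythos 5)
Your argument is correct and is the standard Vitali-type covering proof for doubling spaces. Note that the paper itself does not prove this lemma---it simply cites it from \cite[Lemma 2.1]{a10}---so there is no in-paper proof to compare against; your proof supplies a self-contained verification.

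All three steps check out. The maximal $r$-separated family makes the half-radius balls $B(x_{B,i},r/2)$ pairwise disjoint; maximality gives the covering (i); and for (ii) the crucial move is exactly the one you flag: since $x_{B,i}\in B(x_B,\ell r)$ one has $B(x_B,2\ell r)\subset B(x_{B,i},3\ell r)$, so applying the strong homogeneity \eqref{2.3} \emph{at the center $x_{B,i}$} with scaling factor $6\ell$ gives $\mu(B(x_B,2\ell r))\le C\ell^n\mu(B(x_{B,i},r/2))$, and summing the disjoint measures yields $N_\ell\le C\ell^n$ (this also establishes a posteriori that the maximal family is finite, since each half-ball has measure bounded below by a fixed positive fraction of $\mu(B(x_B,2\ell r))$). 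For (iii), for any $y$ and any $i$ with $y\in B(x_{B,i},r)$ you have $B(y,r)\subset B(x_{B,i},2r)$ and $B(x_{B,i},r/2)\subset B(y,3r/2)$, and two or three applications of \eqref{2.1} bound the multiplicity by a constant depending only on $C_1$, uniformly in $y$, $r$ and $\ell$. Your remark that invoking \eqref{2.4} instead would degrade the exponent to $n+N$ is also accurate and worth keeping in mind; recentering the doubling estimate is what makes the bound $\ell^n$ rather than $\ell^{n+N}$.

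One minor point of bookkeeping: you should say explicitly that $\mu$ is assumed nontrivial, so that $\mu(B(x_B,2\ell r))>0$ and the division in (ii) is legitimate; under \eqref{2.1} any ball of measure zero would force all balls to have measure zero, so nontriviality of $\mu$ is equivalent to all balls having positive finite measure. Otherwise the proof is complete.
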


\subsection{Holomorphic functional calculi\label{s2.2}}

\hskip\parindent We now recall some basic notions of holomorphic functional
calculi introduced by McIntosh \cite{m86}.

Let $0<\nu<\gz<\pi$. Define the {\it closed sector $S_{\nu}$} in the
complex plane $\cc$ by setting $S_{\nu}\equiv\{z\in\cc:\
|\mathrm{arg}\,z|\le\nu\}\cup\{0\}$ and denote by $S^0_{\nu}$ its
{\it interior}. We employ the following \emph{subspaces},
$H_\fz(S_\nu^0)$ and $\Psi(S_\nu^0)$, of the {\it space
$H(S^0_{\nu})$} of all holomorphic functions on $S^0_{\nu}$:
$$H_{\fz}(S^0_{\nu})\equiv\lf\{b\in H(S^0_{\nu}):\ \|b\|_{L^\fz(S^0_{\nu})}\equiv
\sup_{z\in S^0_{\nu}}|b(z)|<\fz\r\}$$ and
\begin{eqnarray*}
&&\Psi(S^0_{\nu})\equiv\{\psi\in H(S^0_{\nu}):\ \text{there exist}\
s\in(0,\fz)\ \text{and}\ C\in(0,\fz)\ \text{such that}\noz\\
&&\hspace{5em}\text{for all}\ z\in S^0_{\nu},\,|\psi(z)|\le C|z|^s
(1+|z|^{2s})^{-1}\}.
\end{eqnarray*}

Given $\nu\in(0,\pi)$, a closed operator $L$ in $L^2 (\rn)$ is
called to be of {\it type $\nu$} if $\sz(L)\subset S_{\nu}$, where $\sz(L)$
denotes its {\it spectra}, and if for all $\gz>\nu$, there
exists a positive constant $C_{\gz}$ such that for all $\lz\not\in
S_{\gz}$, $\|(L-\lz I)^{-1}\|_{L^2 (\rn)\to L^2 (\rn)}\le
C_{\gz}|\lz|^{-1}$. Let $\mathscr{X}$ and $\mathscr{Y}$ be two
\emph{linear normed spaces} and $T$ be a \emph{continuous linear operator} from
$\mathscr{X}$ to $\mathscr{Y}$. Here and in what follows,
$\|T\|_{\mathscr{X}\to\mathscr{Y}}$ denotes the {\it operator norm
of $T$ from $\mathscr{X}$ to $\mathscr{Y}$}. Let $\tz\in(\nu,\gz)$
and $\Gamma$ be the {\it contour} $\{\xi=re^{\pm i\tz}:\ r\ge0\}$
parameterized clockwise around $S_{\nu}$. Then if $L$ is of type
$\nu$ and $\psi\in\Psi(S^0_{\nu})$, the {\it operator} $\psi(L)$ is
defined by
$$\psi(L)\equiv\frac{1}{2\pi i}\int_{\Gamma}(L-\lz I)^{-1}\psi(\lz)\,d\lz,$$
where the integral is absolutely convergent in $\mathfrak{L}(L^2
(\rn), L^2 (\rn))$ (the {\it class of all bounded linear operators
in $L^2 (\rn)$}). By the Cauchy theorem, we know that
$\psi(L)$ is independent of the choices of $\nu$ and $\gz$ such
that $\tz\in(\nu,\gz)$. Moreover, if $L$ is one-to-one and has dense
range, and $b\in H_{\fz}(S^0_{\gz})$, then $b(L)$ is defined by
setting $b(L)\equiv[\psi(L)]^{-1}(b\psi)(L)$, where $\psi(z)\equiv
z(1+z)^{-2}$ for all $z\in S^0_{\gz}$. It was proved by McIntosh
\cite{m86} that $b(L)$ is a well-defined linear operator in $L^2 (\rn)$.
Moreover, the operator $L$ is said to have a {\it bounded
$H_{\fz}$-calculus} in $L^2 (\rn)$, if for all $\gz\in(\nu,\pi)$,
there exists a positive constant $\wz{C}_{\gz}$
such that for all $b\in H_{\fz}(S^0_{\gz})$, $b(L)\in
\mathfrak{L}(L^2 (\rn), L^2 (\rn))$ and
\begin{equation}\label{2.6}
\|b\|_{L^2 (\rn)\to L^2 (\rn)}\le
\wz{C}_{\gz}\|b\|_{L^\fz(S^0_{\gz})}.
\end{equation}

\subsection{Assumptions on the operator $L$\label{s2.3}}

\hskip\parindent Throughout the whole paper,
we always suppose that the considered operators
$L$ satisfy the following \emph{assumptions}.

\medskip

\noindent{\bf Assumption $(L)_1$.} The operator $L$ has a bounded
$H_\fz$-calculus in $L^2(\cx)$.

\medskip

\noindent{\bf Assumption $(L)_2$.} The semigroup $\{e^{-tL}\}_{t>0}$
generated by $L$ is analytic on $L^2(\cx)$ and satisfies the \emph{Davies-Gaffney
estimate}, namely, there exist positive constants $C_2$ and $C_3$ such that
for all closed sets $E$ and $F$ in $\cx$, $t\in(0,\fz)$ and $f\in L^2(E)$,
\begin{equation}\label{2.7}
\|e^{-tL}f\|_{L^2(F)}\le
C_2\exp\lf\{-\frac{[\dist(E,F)]^2}{C_3t}\r\}\|f\|_{L^2(E)},
\end{equation}
where and in what follows, $\dist(E,F)\equiv\inf_{x\in E,\,y\in
F}d(x,y)$ and the \emph{space} $L^2(E)$ denotes the set of all $\mu$-measurable
functions on $E$ such that
$\|f\|_{L^2(E)}\equiv\{\int_E|f(x)|^2\,d\mu(x)\}^{1/2}<\fz$.

\medskip

\begin{remark}\rm\label{r2.1}
By the functional calculus of $L$ on $L^2(\cx)$,
it is easy to see that if an operator $L$ satisfies Assumptions
$(L)_1$ and $(L)_2$, the adjoint operator $L^*$ also satisfies Assumptions
$(L)_1$ and $(L)_2$ and, therefore, the following Lemmas \ref{l2.2}
and \ref{l2.3} also hold for $L^*$.
\end{remark}

By Assumptions $(L)_1$ and $(L)_2$, we have the following
technical result which was obtained by Anh and Li
in \cite[Proposition 2.2]{al11}.

\begin{lemma}\label{l2.2}
Let $L$ satisfy Assumptions $(L)_1$ and $(L)_2$. Then for any
fixed $k\in\zz_+$ (resp. $j,k\in\zz_+$ with $j\le k$), the family
$\{(t^2L)^ke^{-t^2L}\}_{t>0}$ (resp.
$\{(t^2L)^j(I+t^2L)^{-k}\}_{t>0}$) of operators also satisfies the
Davies-Gaffney estimate \eqref{2.7} with positive constants $C_2$,
$C_3$ depending only on $n$ and $k$ (resp. $n$, $j$ and $k$).
\end{lemma}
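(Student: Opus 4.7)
The plan is to treat the two families separately and in each case reduce the desired off-diagonal bound back to \eqref{2.7} by means of standard tools from analytic semigroup theory and holomorphic functional calculus. Assumption $(L)_2$ makes $z\mapsto e^{-zL}$ holomorphic on an open sector $S^0_\nu$ with $\nu<\pi/2$, while Assumption $(L)_1$ supplies the uniform $L^2(\cx)\to L^2(\cx)$ boundedness of $e^{-zL}$ on subsectors that is needed for the complex-time extension below.

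For the family $\{(t^2L)^ke^{-t^2L}\}_{t>0}$, I would represent the operator by a Cauchy integral obtained by differentiating the semigroup $k$ times at $s=t^2$,
$$
(t^2L)^ke^{-t^2L}=\frac{(-1)^k\,k!\,t^{2k}}{2\pi i}\oint_{|z-t^2|=t^2/2}\frac{e^{-zL}}{(z-t^2)^{k+1}}\,dz.
$$
The key first step is to upgrade \eqref{2.7} to complex times on a subsector: for $|\arg z|\le\theta<\pi/2$,
$$
\|e^{-zL}f\|_{L^2(F)}\le C_\theta\exp\bigl\{-c_\theta[\dist(E,F)]^2/|z|\bigr\}\|f\|_{L^2(E)}.
$$
The standard route is a Phragm\'en--Lindel\"of / Stein-interpolation argument applied to the conjugated semigroup $e^{\lambda\varphi}e^{-zL}e^{-\lambda\varphi}$ for a bounded Lipschitz function $\varphi$ approximating $\dist(\cdot,F)$, interpolating between the real-axis bound \eqref{2.7} and the uniform boundedness coming from the $H_\fz$ calculus. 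Inserting this estimate into the Cauchy formula on the contour, where $|z|\sim t^2$ and $|z-t^2|\ge t^2/2$, a routine length-times-supremum estimate gives
$$
\|(t^2L)^ke^{-t^2L}f\|_{L^2(F)}\le C_k\exp\bigl\{-c[\dist(E,F)]^2/t^2\bigr\}\|f\|_{L^2(E)},
$$
which is \eqref{2.7} in the natural $t^2$-time scale, with constants depending only on $n$ and $k$.

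For the family $\{(t^2L)^j(I+t^2L)^{-k}\}_{t>0}$ with $j\le k$, I would start from the subordination identity
$$
(I+t^2L)^{-k}=\frac{1}{\Gamma(k)}\int_0^\fz s^{k-1}e^{-s}e^{-st^2L}\,ds,
$$
valid by spectral calculus, and absorb the factor $(t^2L)^j$ inside to get
$$
(t^2L)^j(I+t^2L)^{-k}=\frac{1}{\Gamma(k)}\int_0^\fz s^{k-j-1}e^{-s}(st^2L)^je^{-st^2L}\,ds.
$$
Applying Part 1 (with $t$ there replaced by $\sqrt{s}\,t$) supplies the off-diagonal bound $\|(st^2L)^je^{-st^2L}\|_{L^2(E)\to L^2(F)}\le C_j\exp\{-c[\dist(E,F)]^2/(st^2)\}$. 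Integrating in $s$ reduces everything to a scalar estimate of the shape $\int_0^\fz s^{k-j-1}e^{-s}\exp(-\alpha/s)\,ds$ with $\alpha\sim[\dist(E,F)]^2/t^2$; the usual split at $s\sim\sqrt{\alpha}$ combined with AM--GM on $s+\alpha/s$ yields decay of the Davies--Gaffney type required by \eqref{2.7}, with constants depending only on $n$, $j$ and $k$.

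The main obstacle is the complex-time extension of \eqref{2.7} in Part 1: this is the sole place where Assumption $(L)_1$ is genuinely used, and it is where care is required in tracking the subsector angle $\theta$ and the resulting decay constant $c_\theta$. Everything else, namely the subordination and binomial manipulations in Part 2 and the length-type estimates in Part 1, is mechanical. The dependence of $C_2,C_3$ on $n$ comes only through the constants transported via \eqref{2.7}, while the dependence on $k$ (resp.\ $j,k$) is visible from the Cauchy kernel power $k+1$ (resp.\ from the factor $1/\Gamma(k)$ and the $s^{k-j-1}$ weight).
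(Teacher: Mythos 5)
The paper does not actually prove this lemma; it is quoted from \cite[Proposition 2.2]{al11}, so there is no in-paper argument against which to compare your write-up. Judged on its own merits, your proof follows what is essentially the standard route for this kind of result: a Cauchy-integral representation together with a complex-time extension of the Davies--Gaffney bound (Phragm\'en--Lindel\"of via Gaffney's conjugation $e^{\lambda\varphi}e^{-zL}e^{-\lambda\varphi}$) handles the semigroup family, and subordination reduces the resolvent family to the semigroup family plus a scalar integral. The mathematics is sound and the constants depend only on the indicated parameters, as required.

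Two points nonetheless deserve care. First, the uniform $L^2\to L^2$ boundedness of $e^{-zL}$ on proper subsectors is a consequence of the analyticity built into Assumption $(L)_2$, not of the bounded $H_\fz$-calculus of Assumption $(L)_1$; attributing it to $(L)_1$ is a misstatement (harmless here, since both assumptions hold, but worth correcting). Second, and more substantively, the subordination step can only yield the Poisson-type decay $\exp\{-c\,\dist(E,F)/t\}$ for $(t^2L)^j(I+t^2L)^{-k}$: AM--GM applied to $s+\alpha/s$ gives $e^{-c\sqrt\alpha}$, not $e^{-c\alpha}$, so with $\alpha=[\dist(E,F)]^2/t^2$ the exponent is $\dist(E,F)/t$ rather than $[\dist(E,F)]^2/t^2$. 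This is not a defect of your argument --- Gaussian off-diagonal decay genuinely fails for the resolvent, even when $L=-\Delta$ --- but your phrase ``decay of the Davies--Gaffney type required by \eqref{2.7}'' glosses over the distinction and, read literally, claims more than can be true. You should state explicitly that for the resolvent family the estimate obtained (and the estimate of \cite[Proposition 2.2]{al11}) is the Gaffney/Poisson form $\exp\{-c\,\dist(E,F)/t\}$, and not the Gaussian form displayed in \eqref{2.7}.
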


By \eqref{2.6}, we have the following useful lemma.

\begin{lemma}\label{l2.3}
Let $L$ satisfy Assumptions $(L)_1$ and $(L)_2$. Then for any
fixed $k\in\nn$, the operator given by setting, for all
$f\in{L^2(\cx)}$ and $x\in\cx$,
$$S_L^kf(x)\ev\lf(\iint_{\bgz(x)}\lf|(t^2L)^ke^{-t^2L}f(y)\r|^2\,
\frac{d\mu(y)}{V(x,t)}\,\frac{dt}t\r)^{1/2},$$
is bounded on ${L^2(\cx)}$.
\end{lemma}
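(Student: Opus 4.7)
The plan is to compute $\|S_L^k f\|_{L^2(\cx)}^2$ directly, swap the order of integration by Fubini, control the resulting integral in $x$ by the doubling property of $\mu$, and then conclude by invoking the standard quadratic (square-function) estimate attached to a bounded $H_\infty$-calculus.

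First I would write, by definition and Fubini,
\begin{align*}
\|S_L^k f\|_{L^2(\cx)}^2
&=\int_{\cx}\int_0^\fz\int_{B(y,t)}\lf|(t^2L)^ke^{-t^2L}f(y)\r|^2
\frac{d\mu(x)}{V(x,t)}\,d\mu(y)\,\frac{dt}{t},
\end{align*}
after observing that $(y,t)\in\bgz(x)$ is equivalent to $x\in B(y,t)$. Then I would use the doubling property \eqref{2.1}: for $x\in B(y,t)$ we have $B(y,t)\subset B(x,2t)$, so $V(y,t)\le V(x,2t)\le C_1 V(x,t)$, whence
$$\int_{B(y,t)}\frac{d\mu(x)}{V(x,t)}\le C_1\frac{\mu(B(y,t))}{V(y,t)}=C_1.$$
This reduces the question to showing that
$$\int_{\cx}\int_0^\fz\lf|(t^2L)^ke^{-t^2L}f(y)\r|^2\,\frac{dt}{t}\,d\mu(y)\ls\|f\|_{L^2(\cx)}^2.$$

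The second step is to recognize this as a McIntosh-type quadratic estimate. Set $\psi(z)\equiv z^ke^{-z}$; then for any $\nu\in(0,\pi/2)$ one has $\psi\in\Psi(S_\nu^0)$ (indeed $|\psi(z)|\ls|z|^k(1+|z|^{2k})^{-1}$ uniformly on $S_\nu^0$, because $e^{-z}$ decays exponentially on any proper subsector of the right half-plane). The substitution $s=t^2$ converts the integral to $\frac12\int_0^\fz|\psi(sL)f|^2\,ds/s$. By Assumption $(L)_1$, $L$ admits a bounded $H_\fz$-calculus on $L^2(\cx)$, and McIntosh's theorem (see \cite{m86}) then yields the quadratic estimate
$$\lf\|\lf(\int_0^\fz|\psi(sL)f|^2\,\frac{ds}{s}\r)^{1/2}\r\|_{L^2(\cx)}
\ls\|f\|_{L^2(\cx)},$$
which finishes the argument.

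The only step requiring real care is verifying that $\psi(z)=z^ke^{-z}$ lies in the class $\Psi(S_\nu^0)$ needed by the McIntosh theory, since the exponential decay is not built into the definition of $\Psi$; this is straightforward once one restricts to $\nu<\pi/2$, but this restriction is legitimate because Assumption $(L)_2$ guarantees $\{e^{-tL}\}_{t>0}$ is analytic, which forces the type angle of $L$ to be strictly less than $\pi/2$. The doubling/Fubini step and the final appeal to the quadratic estimate are then essentially formal, so the principal (and only nontrivial) ingredient is the availability of the $H_\fz$ quadratic estimate supplied by Assumption $(L)_1$.
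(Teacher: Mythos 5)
Your proof is correct and is precisely the standard argument the paper has in mind: the paper gives no details and simply remarks that Lemma \ref{l2.3} follows from \eqref{2.6} (the bounded $H_\infty$-calculus), and your Fubini/doubling reduction from the conical to the vertical square function, followed by McIntosh's quadratic estimate for $\psi(z)=z^k e^{-z}\in\Psi(S_\nu^0)$ with $\nu<\pi/2$, is exactly how one fills in that citation. Your side remark that analyticity of the semigroup (Assumption $(L)_2$) ensures the type angle is below $\pi/2$, so that the exponential decay of $e^{-z}$ makes $\psi$ admissible, is the right point to flag and is handled correctly.
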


\subsection{Orlicz functions\label{s2.4}}

\hskip\parindent Let $\Phi$ be a positive function on
$\rr_{+}\equiv(0,\fz)$. The function $\Phi$ is called of {\it
upper \emph{(resp.} {\it lower}\emph{)} type $p$} for some $p\in[0,\fz)$,
if there exists a positive constant $C$ such that for all
$t\in[1,\fz)$ (resp. $t\in(0,1]$) and $s\in(0,\fz)$,
\begin{equation}\label{2.8}
\Phi(st)\le Ct^p \Phi(s).
\end{equation}
Obviously, if $\Phi$ is of lower type $p$ for some $p\in(0,\fz)$,
then $\lim_{t\to0_{+}}\Phi(t)=0$. So for the sake of convenience,
if it is necessary, we may \emph{assume} that $\Phi(0)=0$. If $\Phi$ is
of both upper type $p_1$ and lower type $p_0$, then $\Phi$ is called
of \emph{type} $(p_0,\,p_1)$. Let
\begin{eqnarray}\label{2.9}
&&p_{\Phi}^{+}\equiv\inf\{p\in(0,\fz):\ \text{there exists a
positive constant}\,
\,C\,\,\\
&&\hspace{4.5 em}\text{such that}\ \eqref{2.8} \,\, \text{holds for
all} \,\,t\in[1,\fz)\,\, \text{and}\,\,s\in(0,\fz)\}\noz
\end{eqnarray}
and
\begin{eqnarray}\label{2.10}
&&p_{\Phi}^-\equiv\sup\{p\in(0,\fz):\ \text{there exists a
positive constant}\,\,C\,\,\\
&&\hspace{4.8 em}\text{such that}\,\,\eqref{2.8} \
\text{holds for all}\ t\in(0,1)\ \text{and}\ s\in(0,\fz)\}.\noz
\end{eqnarray}
It is easy to see that $p_{\Phi}^-\le p_{\Phi}^{+}$
for all $\Phi$. In what follows, $p_{\Phi}^-$ and
$p_{\Phi}^{+}$ are respectively called the {\it critical
lower type index} and the {\it critical upper type index} of $\Phi$.

Throughout the whole paper, we always assume that $\Phi$ satisfies
the following \emph{assumption}.

\medskip

\noindent{\bf Assumption $(\Phi)$.} {Let $\Phi$ be a
positive, continuous, strictly increasing function on $(0,\fz)$
which is of critical lower type $p_{\Phi}^-
\in(0,1]$. Also assume that $\Phi$ is concave.}

\begin{remark}\rm\label{r2.2}
(i) Recall that the function $\Phi$ is called of \emph{strictly lower type} $p$
if \eqref{2.8} holds with $C\ev1$ for all $t\in(0,1)$ and $s\in(0,\fz)$.
Then the \emph{strictly critical lower type index}  $p_{\Phi}$
of $\Phi$ is defined by
\begin{equation*}
p_{\Phi}\equiv\sup\{p\in(0,\fz):\,\Phi(st)\le t^p\Phi(s) \
\text{holds for all}\ t\in(0,1)\ \text{and}\ s\in(0,\fz)\}.
\end{equation*}
Obviously, $p_\Phi\le p_\Phi^-\le p_\Phi^+$. Moreover, it was proved in
\cite[Remark 2.1]{jy10} that $\Phi$ is also of strictly lower type
$p_\Phi$. In other words, $p_\Phi$ is \emph{attainable}.

However, $p_\Phi^-$ and $p_\Phi^+$ may not be attainable.
For example, for $p\in(0,1]$, if $\Phi(t)\equiv t^p$ for all $t\in (0,\fz)$,
then $\Phi$ satisfies Assumption $(\Phi)$ and $p_\Phi=p_\Phi^-=p_\Phi^+=p$;
for $p\in[1/2,1]$, if $\Phi(t)\equiv t^p/\ln(e+t)$ for all $t\in (0,\fz)$,
then $\Phi$ satisfies Assumption $(\Phi)$ and
$p_\Phi^-=p=p_\Phi^+$, $p_\Phi^-$ is not attainable,
but $p_\Phi^+$ is attainable;
for $p\in(0,1/2]$, if $\Phi(t)\equiv t^p\ln(e+t)$ for all $t\in (0,\fz)$, then
then $\Phi$ satisfies Assumption $(\Phi)$ and
$p_\Phi^-=p=p_\Phi^+$, $p_\Phi^-$ is attainable,
but $p_\Phi^+$ is not attainable.

(ii) We observe that, via the Aoki-Rolewicz theorem in \cite{ao,Ro57},
all results in \cite{al11,jy10,jy10a,jy} are still
true if the \emph{assumptions on $p_\Phi$
are replaced by the same assumptions on $p_\Phi^-$}.
\end{remark}

Notice that if $\Phi$ satisfies Assumption $(\Phi)$, then
$\Phi(0)=0$. For any positive function $\wz\Phi$ of critical lower
type $p_{\wz\Phi}^-$, if we set $\Phi(t)\equiv\int_0^t
\frac{\wz\Phi(s)}{s}\,ds$ for $t\in[0,\fz)$, then by
\cite[Proposition 3.1]{vi87}, $\Phi$ is equivalent to $\wz\Phi$,
namely, there exists a positive constant $C$ such that
$C^{-1}\wz\Phi(t)\le\Phi(t)\le C\wz\Phi(t)$ for all $t\in[0,\fz)$;
moreover, $\Phi$ is a positive, strictly increasing, concave and
continuous function of critical lower type $p_{\wz\Phi}^-$. Notice
that all our results of this paper are invariant on equivalent Orlicz
functions. From this, we deduce that {\it all results with $\Phi$ as
in Assumption $(\Phi)$ also hold for all positive functions
$\wz\Phi$ of the same critical lower type $p_\Phi^-$ as
$\Phi$}.

Let $\Phi$ satisfy Assumption $(\Phi)$. A
measurable function $f$ on $\cx$ is said to be in the {\it space
$L^{\Phi}(\cx)$} if $\int_{\cx}\Phi(|f(x)|)\,d\mu(x)<\fz$. Moreover, for
any $f\in L^{\Phi}(\cx)$, define
$$\|f\|_{L^{\Phi}(\cx)}\equiv\inf\left\{\lz\in(0,\fz):
\ \int_{\cx}\Phi\lf(\frac{|f(x)|}{\lz}\r)\,d\mu(x)\le1\right\}.$$

Since $\Phi$ is strictly increasing, we define the function $\rho(t)$
on $(0,\fz)$ by
\begin{equation}\label{2.11}
\rho(t)\ev \frac{t^{-1}}{\Phi^{-1}(t^{-1})}
\end{equation}
for all $t\in (0,\fz)$, where $\Phi^{-1}$ is the \emph{inverse function} of
$\Phi$. Then the types of $\Phi$  and $\rho$ have the following relation.
\emph{If $0<p_0\le p_1\le1$ and $\Phi$ is an increasing function, then $\Phi$
is of type $(p_0,p_1)$ if and only if $\rho$ is of type
$(p_1^{-1}-1,p_0^{-1}-1);$} see \cite{vi87} for its proof.

\section{The Space $\vmo${\label{s3}}}

\hskip\parindent In this section, we introduce the generalized
vanishing mean oscillation spaces associated with $L$. Throughout
this section, we \emph{always assume} that $L$ satisfies Assumptions
$(L)_1$ and $(L)_2$.

We first recall the
notion of tent spaces in \cite{r07}, which when $\cx\ev\rn$ were
first introduced by Coifman, Meyer and Stein \cite{cms85}.

For any $\nu>0$ and $x\in\cx$, let
$\bgz_\nu(x)\equiv\{(y,t)\in\cx\times (0,\fz):\,d(x,y)<\nu t\}$
denote the {\it cone of aperture $\nu$ with vertex} $x\in\cx$. For
any closed set $F$ of $\cx$, denote by $\mr_\nu F$ the {\it union of
all cones with vertices in $F$}, namely, $\mr_\nu
F\equiv\bigcup_{x\in F}\bgz_\nu(x)$; and for any open set $O$ in
$\cx$, denote the {\it tent over $O$} by $T_\nu(O)$, which is
defined as $T_\nu(O)\ev[\mr_\nu(O^\com)]^\com$. It is easy to see
that $T_\nu(O)=\{(x,t)\in\cx\times(0,\fz):\,d(x,O^\com)\ge\nu t\}$.
In what follows, we denote $\mr_1(F)$, $\bgz_1(x)$ and $T_1(O)$
simply by $\mr(F)$, $\bgz(x)$ and $\wh O$, respectively.

For all measurable functions $g$ on $\xt$ and $x\in\cx$, define
$$\ca_\nu(g)(x)\ev\lf(\iint_{\bgz_\nu(x)}|g(y,t)|^2\,\frac{d\mu(y)}
{V(x,t)}\,\frac{dt}t\r)^{1/2}$$
and
$$\cro(g)(x)\ev\sup_{B\ni x}\frac{1}{\rho(\mu(B))}\lf(\frac{1}{\mu(B)}
\iint_{\wh B}|g(y,t)|^2\,\frac{d\mu(y)\,dt}t\r)^{1/2},$$
where the supremum is taken over all balls $B$ containing $x$.
We denote $\ca_1(g)$ simply by $\ca(g)$.

Recall that for $p\in(0,\fz)$, the {\it tent space $T_2^p(\cx)$} is
defined to be the space of all measurable functions $g$ on
$\xt$ such that $\|g\|_{T_2^p(\cx)}\ev\|\ca(g)\|_\lp<\fz$, which
when $\cx\equiv\rn$ was introduced by Coifman, Meyer and Stein
\cite{cms85} and when $\cx$ is a space of homogeneous type by Russ
in \cite{r07}. Let $\Phi$ satisfy Assumption $(\Phi)$. In what
follows, we denote by $\tx$ the {\it space of all measurable
functions $g$ on $\xt$ such that $\ca(g)\in L^\Phi(\cx)$}, and for
any $g\in\tx$, define its {\it norm} by
$$\|g\|_{\tx}\ev\|\ca(g)\|_\lx=\inf\lf\{\lz>0:\,\int_\cx\Phi\lf(\frac
{\ca(g)(x)}\lz\r)\,d\mu(x)\le1\r\};$$
the {\it space} $\txz$ is defined to be the space of
all measurable functions $g$ on $\xt$
satisfying $\|g\|_\txz\ev\|\cro(g)\|_{L^\fz(\cx)}<\fz$.

Recall that a function $a$ on $\xt$ is called a {\it $\tx$-atom} if

{\rm (i)} there exists a ball $B\st\cx$ such that $\supp a\st\wh B$;

{\rm(ii)} $\iint_{\wh B}|a(x,t)|^2\,\frac{d\mu(x)\,dt}t\le[\mu(B)]^{-1}
[\rho(\mu(B))]^{-2}$.

Since $\Phi$ is concave, from Jensen's inequality and
H\"older's inequality, we deduce that for all $\tx$-atoms
$a$, $\|a\|_\tx\le1$; see \cite{jy} for the details.
Moreover, the following atomic decomposition for elements in
$\tx$ is just \cite[Theorem 3.1]{jy}.

\begin{lemma}\label{l3.1}
Let $\Phi$ satisfy Assumption $(\Phi)$. Then for any $f\in\tx$,
there exist $\tx$-atoms $\{a_j\}_{j=1}^\fz$ and
$\{\lz_j\}_{j=1}^\fz\st\cc$ such that for almost every
$(x,t)\in\xt$,
\begin{equation}\label{3.1}
f(x,t)=\sum_{j=1}^\fz\lz_j a_j(x,t),
\end{equation}
and the series converges in $\tx$. Moreover, there exists
a positive constant $C$ such that for all $f\in\tx$,
\begin{equation}\label{3.2}
\blz(\{\lz_j a_j\}_{j=1}^\fz)\ev\inf\lf\{\lz>0:\,\sum_{j=1}^\fz \mu(B_j)\Phi
\lf(\frac{|\lz_j|}{\lz \mu(B_j)\rho(\mu(B_j))}\r)\le1\r\}\le C\|f\|_\tx,
\end{equation}
where $\wh B_j$ appears as the support of $a_j$.
\end{lemma}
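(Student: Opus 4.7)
The plan is to follow the Coifman--Meyer--Stein strategy for tent-space atomic decompositions, as adapted by Russ to spaces of homogeneous type, and to verify that the level-set sum controlled by the Whitney decomposition translates into the Orlicz-type coefficient bound $\Lambda$.

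First, I would set up the level sets. For $f\in\tx$ and $k\in\zz$, define $O_k\equiv\{x\in\cx:\ \ca(f)(x)>2^k\}$. Since $\ca(f)$ is lower semicontinuous, each $O_k$ is open, and $O_{k+1}\subset O_k$. Using the doubling property and Lemma \ref{l2.1}, each $O_k$ admits a Whitney-type decomposition $O_k=\bigcup_j B_{k,j}$, where the balls $\{B_{k,j}\}_j$ have bounded overlap and radii comparable to $\dist(B_{k,j},O_k^\com)$. Standard arguments give $f(x,t)=0$ for almost every $(x,t)$ outside $\bigcup_k \wh O_k$, so one can hope to decompose $f$ via the layer differences $\wh O_k\setminus\wh O_{k+1}$.

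Second, I would build the atoms. Set $A_{k,j}\equiv(B_{k,j}\times(0,\fz))\cap(\wh O_k\setminus\wh O_{k+1})$ (after a standard modification on the overlaps so that $\{A_{k,j}\}_{k,j}$ partition $\bigcup_k\wh O_k$). Choose an enlarged ball $\wz B_{k,j}$ so that $A_{k,j}\subset\wh{\wz B}_{k,j}$, and define
$$\lz_{k,j}\equiv C\, 2^k\,\mu(\wz B_{k,j})\,\rho(\mu(\wz B_{k,j})),\qquad a_{k,j}\equiv\lz_{k,j}^{-1}f\,\chi_{A_{k,j}},$$
for a sufficiently large absolute constant $C$. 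Then \eqref{3.1} becomes $f=\sum_{k,j}\lz_{k,j}a_{k,j}$ pointwise. The main obstacle lies in verifying atom condition (ii): one needs
$$\iint_{\wh{\wz B}_{k,j}}|a_{k,j}(x,t)|^2\,\frac{d\mu(x)\,dt}{t}\le[\mu(\wz B_{k,j})]^{-1}[\rho(\mu(\wz B_{k,j}))]^{-2}.$$
This amounts to proving the geometric estimate
$$\iint_{A_{k,j}}|f(x,t)|^2\,\frac{d\mu(x)\,dt}{t}\ls 2^{2k}\,\mu(\wz B_{k,j}),$$
which is obtained by Fubini, writing the left-hand side via $\int V(x,t)^{-1}\,d\mu(x)$ against a set where $\ca(f)\le 2^{k+1}$; this in turn requires that points in $A_{k,j}$ are vertices of cones whose projection meets $O_{k+1}^\com$ in non-negligible measure, a standard Whitney-geometry verification based on \eqref{2.1}--\eqref{2.4} and Lemma \ref{l2.1}.

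Third, I would deduce the coefficient bound \eqref{3.2}. By the bounded overlap of $\{B_{k,j}\}_j$ together with Lemma \ref{l2.1} applied to the enlargement $\wz B_{k,j}$, one gets $\sum_j\mu(\wz B_{k,j})\ls\mu(O_k)$. Hence for any $\lz>0$,
$$\sum_{k,j}\mu(\wz B_{k,j})\,\Phi\lf(\frac{|\lz_{k,j}|}{\lz\,\mu(\wz B_{k,j})\,\rho(\mu(\wz B_{k,j}))}\r)\sim\sum_{k,j}\mu(\wz B_{k,j})\,\Phi\lf(\frac{2^k}{\lz}\r)\ls\sum_k\mu(O_k)\,\Phi\lf(\frac{2^k}{\lz}\r).$$
Since $\Phi$ has critical lower type $p_\Phi^-$, a standard layer-cake computation together with monotonicity of $\Phi$ gives $\sum_k\mu(O_k)\Phi(2^k/\lz)\ls\int_\cx\Phi(\ca(f)(x)/\lz)\,d\mu(x)$. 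Choosing $\lz=C\|f\|_\tx$ for a suitable constant makes the right-hand side at most $1$, which by definition of $\Lambda$ yields $\Lambda(\{\lz_j a_j\})\ls\|f\|_\tx$. Convergence of \eqref{3.1} in $\tx$ follows by applying the same estimate to the tails $\sum_{|k|>K,j}\lz_{k,j}a_{k,j}$ and using the absolute continuity of $\int_\cx\Phi(\ca(f)/\lz)\,d\mu$.
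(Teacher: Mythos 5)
The paper does not prove Lemma~\ref{l3.1}; it is cited verbatim as \cite[Theorem 3.1]{jy}, and the proof given there follows Russ's adaptation \cite{r07} of the Coifman--Meyer--Stein tent-space atomic decomposition to spaces of homogeneous type, extended to Orlicz coefficients. Your outline is exactly that strategy, so the approach matches the cited reference.

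One technical point should be made explicit rather than subsumed under ``standard Whitney-geometry verification.'' The crucial estimate
$\iint_{A_{k,j}}|f(x,t)|^2\,\frac{d\mu(x)\,dt}{t}\ls 2^{2k}\mu(\wz B_{k,j})$
and the measure-positivity claim it rests on require working with the maximal-density enlargements $O_k^*\equiv\{x\in\cx:\ \cm(\chi_{O_k})(x)>\gamma\}$ for a fixed $\gamma\in(0,1)$, not the raw level sets $O_k$. For $(y,t)\notin\wh{O_{k+1}}$ one only knows that a \emph{single point} of $O_{k+1}^\com$ lies within distance $t$ of $y$; this does not by itself put a positive proportion of $B(y,t)$ inside $\{\ca(f)\le 2^{k+1}\}$, so the Fubini step does not close. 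Replacing $O_{k+1}$ by $O_{k+1}^*$ fixes this: if $(y,t)\notin\wh{O_{k+1}^*}$ then some $x\in(O_{k+1}^*)^\com$ lies within distance $t$ of $y$, and the density condition defining $O_{k+1}^*$ forces $\mu(B(y,t)\cap O_{k+1}^\com)\gs\mu(B(y,t))$. Correspondingly, the Whitney decomposition must be taken on $O_k^*$ rather than $O_k$, and the weak-$(1,1)$ bound $\mu(O_k^*)\ls\mu(O_k)$ is then needed to feed into your final coefficient sum $\sum_k\mu(O_k^*)\Phi(2^k/\lz)\ls\sum_k\mu(O_k)\Phi(2^k/\lz)$. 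With this amendment the rest of your sketch --- the partition $\{A_{k,j}\}$ built from the layers $\wh{O_k^*}\setminus\wh{O_{k+1}^*}$, the normalization making the $a_{k,j}$ genuine $\tx$-atoms, the layer-cake computation exploiting the positive lower type of $\Phi$, and the tail argument for $\tx$-convergence --- is correct.
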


\begin{definition}\rm\label{d3.1}
Let $L$ satisfy Assumptions $(L)_1$ and $(L)_2$, $\Phi$ satisfy
Assumption $(\Phi)$, $\rho$ be as in \eqref{2.11},
$M\in\nn$, $\ez\in(0,\fz)$ and $B$
be a ball. A function $\bz\in L^2(\cx)$ is called a
\emph{$(\Phi,\,M,\,\ez)_L$-molecule adapted to the ball $B$} if there
exists a function $b\in\cd(L^M)$ such that

{\rm (i)} $\bz=L^Mb$;

{\rm(ii)} For every $k\in\{0,1,\cdots,M\}$ and $j\in\zz_+$, there holds
$$\|(r_B^2L)^kb\|_{L^2(U_j(B))}\le r_B^{2M}2^{-j\ez}[\mu(2^jB)]^{-1/2}
[\rho(\mu(2^jB))]^{-1},$$
where $U_j(B)$ for $j\in\zz_+$ is as in \eqref{2.5}.
\end{definition}

Let $\phi=L^M\nu$ be a function in $L^2(\cx)$, where
$\nu\in\cd(L^M)$. Following \cite{hlmmy, hm09}, for $\ez>0$,
$M\in\nn$ and a fixed $x_0\in\cx$, we introduce the {\it space}
\begin{equation}\label{3.3}
\cm^{M,\ez}_\Phi(L)\ev\lf\{\phi=L^M\nu\in L^2(\cx):\ \|\phi\|_
{\cm^{M,\ez}_\Phi(L)}<\fz\r\},
\end{equation}
where
$$\|\phi\|_{\cm^{M,\ez}_\Phi(L)}\ev\sup_{j\in\zz_+}\lf\{2^{j\ez}[V(x_0,
2^j)]^{1/2}\rho(V(x_0,2^j))\sum^M_{k=0}\|L^k\nu\|_{L^2(U_j(B(x_0,1)))}\r\};$$
see also \cite{al11}.

Notice that if $\phi\in\ml$ for some $\ez>0$ with norm $1$, then
$\phi$ is a $\pme$-molecule adapted to the ball
$B(x_0,1)$. Conversely, if $\bz$ is a $\pme$-molecule
adapted to any ball, then $\bz\in\ml$.

Let $A_t$ denote either $(I+t^2L)^{-1}$ or $e^{-t^2L}$ and
$A_t^*$ either $(I+t^2L^*)^{-1}$ or $e^{-t^2L^*}$.
For any $f\in(\mlx)^*$,
the {\it dual space } of $\mlx$, we claim that
{\it $(I-A_t)^Mf\in L^2_{\rm loc}(\cx)$
in the sense of distributions}.
Indeed, for any ball $B$, if $\psi\in L^2(B)$,
then it follows form the Davies-Gaffney estimate \eqref{2.7}
and Remark \ref{r2.1} that
$(I-A_t^*)^M\psi\in\mlx$ for every $\ez>0$. Thus, there exists
a non-negative constant $C(t,r_B,\dist(B,x_0))$, depending
on $t$, $r_B$ and $\dist(B,x_0)$, such that for all $\psi\in L^2(B)$,
$$|\la(I-A_t)^Mf,\psi\ra|\equiv|\la f,(I-A_t^*)^M\psi\ra|\le C(t,r_B,
\dist(B,x_0))\|f\|_{(\mlx)^*}\|\psi\|_{L^2(B)},$$ which implies that
$(I-A_t)^Mf\in L^2_{\rm loc}(\cx)$ in the sense of distributions.

Finally, for any $M\in\nn$, define
\begin{equation}\label{3.4}
\cm^M_{\Phi,L}(\cx)\equiv\bigcap_{\ez>n(1/p_\Phi^--1/p_\Phi^+)}
(\cm_\Phi^{M,\ez}(L^*))^*,
\end{equation}
where $p_\Phi^+$ and $p_\Phi^-$ are, respectively, as in \eqref{2.9}
and \eqref{2.10}.

\begin{definition}\rm\label{d3.2}
Let $L$, $\Phi$ and $\rho$ be as in Definition \ref{d3.1}
and $M>\frac n2(\frac1{p_\Phi^-}-\frac12)$.
A function $f\in\cm^M_{\Phi,L}(\cx)$ is said to be in the \emph{space} $\bmol$ if
$$\|f\|_{\bmol}\equiv\sup_{B\subset\cx}\frac1{\rho(\mu(B))}\lf[\frac1{\mu(B)}
\int_B|(I-e^{-r_B^2L})^Mf(x)|^2\,d\mu(x)\r]^{1/2}<\fz,$$
where the supremum is taken over all balls $B$ of $\cx$.
\end{definition}

Now, let us recall some notions on the Orlicz-Hardy spaces associated with
$L$. For all $f\in L^2(\cx)$ and $x\in\cx$, define
$$\cs_Lf(x)\ev\lf(\iint_{\bgz(x)}\lf|\tl f(y)\r|^2\dyt\r)^{1/2}.$$
The {\it Orlicz-Hardy space} $\hx$ is defined to be the completion of the set $\{f\in
L^2(\cx):\,\cs_Lf\in\lx\}$ with respect to the quasi-norm $\|f\|_\hx
\ev\|\cs_Lf\|_\lx$.

The Orlicz-Hardy space $\hx$ was introduced and studied in \cite{al11}
(see also \cite{jy}).
If $\Phi(t)\ev t^p$ for $p\in(0,1]$ and all $t\in(0,\fz)$, then the space $\hx$
coincides with the Hardy space $H_L^p(\cx)$, which was introduced
and studied by Duong and Li \cite{dl}.

Let the \emph{space} $\hmfl$ denote the spaces of finite
linear combinations of $\pme$-molecules.
By \cite[Corollary 3.8]{al11}, we obtain that $\hmfl$ is dense
in $\hx$; see also \cite[Corollary 4.2]{jy}.

In what follows, for $M\in\nn$, let $C(M)$ be the
\emph{positive constant} such that
\begin{equation}\label{3.5}
C(M)\int_0^\fz t^{2(M+1)}e^{-2t^2}\dt=1.
\end{equation}

Recall that a variant of the following representation of finite linear
combinations of molecules was gives by \cite[Theorem 3.15]{al11} without
a detailed proof. The following Theorem \ref{t3.1} gives more \emph{accurate
ranges} of $\ez$ and $M$, comparing with \cite[Theorem 3.15]{al11}.

\begin{theorem}\label{t3.1}
Let $L$, $\Phi$ and $M$ be as in Definition \ref{d3.2}, and $\ez\in(0,M-\mz)$.
Assume that $f=\sum_{i=0}^N\lz_ia_i$, where $N\in\nn$, $\{a_i\}_{i=0}^N$ is a
family of $(\Phi,2M,\ez)_L$-molecules, $\{\lz_i\}_{i=0}^N\subset\cc$
and $\sum_{i=0}^N|\lz_i|<\fz$.
Then there exists a representation of $f=\sum_{i=0}^{2N}\mu_im_i$, where
$\{m_i\}_{i=1}^{2N}$ are $\pme$-molecules, $\{\mu_i\}_{i=0}^{2N}\subset\cc$ and
$\sum_{i=0}^{2N}|\mu_i|\le C\|f\|_\hx,$
where $C$ is a positive constant, depending only on $\cx,L,M,\ez$ and $n$.
\end{theorem}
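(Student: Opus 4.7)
The plan is to combine the Calder\'on reproducing formula with the atomic decomposition of the tent space (Lemma \ref{l3.1}) and with the identification of $\plm$-images of tent atoms as $\pme$-molecules, then upgrade the resulting representation to the finite one of length $O(N)$ demanded by the statement using the finite-sum structure of $f$. Assumption $(L)_1$ and \eqref{3.5} yield, for $f\in L^2(\cx)$,
$$f=C(M)\int_0^\fz(t^2L)^{M+1}e^{-2t^2L}f\dt=\plm(G),$$
with $\plm F(x):=\int_0^\fz(t^2L)^M e^{-t^2L}[F(\cdot,t)](x)\dt$ and $G(y,t):=C(M)(t^2L)e^{-t^2L}f(y)$; the pointwise bound $\ca(G)\le C(M)\cs_Lf$ gives $\|G\|_\tx\ls\|f\|_\hx$. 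Applying Lemma \ref{l3.1} then produces $G=\sum_k\tz_kA_k$ with $T_\Phi$-atoms $A_k$ supported on $\wh B_k$ and $\blz(\{\tz_kA_k\})\ls\|f\|_\hx$. To see that $f=\sum_k\tz_k\plm(A_k)$ is (up to a fixed constant) a molecular representation, I verify that for each such $A$ the function $\plm(A)=L^Mb$, with $b(x):=\int_0^{r_B}t^{2M}e^{-t^2L}[A(\cdot,t)](x)\dt$, satisfies $\|(r_B^2L)^kb\|_{L^2(U_j(B))}\ls r_B^{2M}2^{-j\ez}[\mu(2^jB)]^{-1/2}[\rho(\mu(2^jB))]^{-1}$ for every $k\in\{0,\dots,M\}$ and $j\in\zz_+$; this estimate combines Lemma \ref{l2.2} applied to $(t^2L)^{M+k}e^{-t^2L}$, Cauchy-Schwarz in $t$, the atom normalisation, and the doubling bounds \eqref{2.3}--\eqref{2.4}, and the geometric series in $j$ converges exactly under the range $\ez<M-\mz$ prescribed in the theorem.

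For the promised finiteness I exploit the structure of $f=\sum_{i=0}^N\lz_ia_i$ by splitting the Calder\'on integral for each $a_i$ at $t=r_{B_i}$. The small-$t$ piece of $\lz_ia_i$ appears as $\plm$ applied to a tent-space element supported in the strip $\cx\times(0,r_{B_i})$, and running the tent-atomic decomposition through $\plm$ while booking the coefficients against the global bound $\|G\|_\tx\ls\|f\|_\hx$ collapses this contribution, after renormalisation, to one $\pme$-molecule per index $i$. The large-$t$ piece is $L^M\wz b_i$ with $\wz b_i(x):=\int_{r_{B_i}}^\fz t^{2M}(t^2L)e^{-2t^2L}a_i(x)\dt$; I show it is itself a $\pme$-molecule adapted to $B_i$ by applying Lemma \ref{l2.2} to $(t^2L)^{M+k+1}e^{-2t^2L}$ and using the full $2M$-derivative molecular decay of $a_i$ to absorb the extra factors coming from the Calder\'on integrand—so the range $\ez<M-\mz$ suffices here too. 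Pairing the two pieces over $i\in\{0,\dots,N\}$ produces at most $2N+1$ molecules, and the coefficients $\mu_i$ inherit the bound $\sum_i|\mu_i|\ls\|f\|_\hx$ from the global tent-space estimate in the small-$t$ case and from the molecular normalisation of $a_i$ combined with the tent routing in the large-$t$ case.

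The main obstacle is the simultaneous demand for finiteness (at most $O(N)$ molecules) and the sharp coefficient bound $\|f\|_\hx$ rather than the crude $\sum_i|\lz_i|$: any strategy working term by term on the $a_i$ gives only the latter, whereas the sharp bound requires a global argument on $f$ that naturally produces an infinite expansion. The $t$-splitting at the scales $r_{B_i}$ is designed to thread this needle by routing coefficient control through the global tent norm of $G$ while keeping the combinatorial count proportional to $N$. This is also precisely where the argument diverges from \cite[Theorem 5.4]{hlmmy}: there the compact support of atoms makes $G$ automatically compactly supported in $t$ and hence the tent decomposition finite, whereas here molecular decay must play the r\^ole of support, and the analysis of the large-$t$ tail via Lemma \ref{l2.2} both produces the single molecule and is responsible for the improved range of $\ez$ that is advertised in Remark \ref{r3.2}.
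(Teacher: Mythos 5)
Your first paragraph is a valid route to an \emph{infinite} molecular expansion: $f=\plm(G)$, Lemma \ref{l3.1} applied to $G$, the fact that $\plm$ sends $\tx$-atoms to $\pme$-molecules, and the upper-type-$1$ estimate of Lemma \ref{l4.1} give $f=\sum_k\tz_k\plm(A_k)$ with $\sum_k|\tz_k|\ls\|f\|_\hx$. But the theorem asks for a \emph{finite} list of $2N+1$ molecules, and the resolution you propose in the second paragraph does not work. Splitting the Calder\'on integral of $\lz_ia_i$ at the fixed scale $t=r_{B_i}$ makes both pieces carry the coefficient $\lz_i$ intrinsically, not the renormalized $\mu\ev N^{-1}\|f\|_\hx$: rescaling the large-$t$ piece to have coefficient $\mu$ forces the factor $\mu^{-1}|\lz_i|$ into the function that must satisfy the molecule bounds, and since only $\sum_i|\lz_i|<\fz$ and $\|f\|_\hx$ are given, nothing controls $\max_i|\lz_i|/\mu$, so the rescaled object need not be a molecule. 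Moreover, your claim that the small-$t$ piece ``collapses, after renormalisation, to one $\pme$-molecule per index $i$'' by running the tent-atomic decomposition through $\plm$ is not an argument: Lemma \ref{l3.1} produces infinitely many atoms, hence infinitely many molecules after $\plm$, and there is no mechanism by which an infinite series of distinct molecules collapses to a single function.

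The paper threads the needle differently: it splits at a single \emph{global, adaptive} threshold $K_1$ rather than at $r_{B_i}$. After setting $\mu_i\ev\wz C(M)\mu$, the large-$t$ piece $\mu^{-1}\lz_i\int_{K_1}^\fz\cdots$ satisfies the $\pme$-molecule bounds up to an extra factor $\mu^{-1}|\lz_i|\,(r_{B_i}/K_1)^{2[M-\ez/2-\mzx]}$, which is exactly killed by choosing $K_1$ of order $\max_ir_{B_i}\bigl[\wz C\,\mu^{-1}\max_i|\lz_i|\bigr]^{1/(2[M-\ez/2-\mzx])}$. The small-$t$ piece is then a $\pme$-molecule only on an \emph{enlarged} ball $2^{K_0}B_i$, with $K_0$ chosen to absorb the residual $K_1$- and $r_{B_i}$-dependent constants. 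The adaptive choice of the split point, together with the dilation $2^{K_0}$ of the adapted ball, is precisely the idea your sketch is missing; it is also what makes the tent decomposition of your first paragraph unnecessary in the actual proof. Without it, any $t$-split that is fixed in advance (such as at $r_{B_i}$) cannot trade the crude bound $\sum_i|\lz_i|$ for the sharp $\|f\|_\hx$.
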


\begin{proof} Throughout this proof, we choose
$\wz p_\Phi\in(0, p_\Phi^-)$ such that $M>\mzx$ and $\ez\in(0,M-\mzx)$.
Therefore, $\Phi$ is of \emph{lower type} $\wz p_\Phi$ and hence $\rho$
of \emph{upper type} $1/\wz p_\Phi-1$.

Since $\{a_i\}_{i=0}^N$ is a family of $(\Phi,2M,\ez)_L$-molecules,
by definition there exist a family $\{b_i\}_{i=0}^N$ of functions
and a family $\{B_i\}_{i=0}^N$ of balls such that for every $i\in
\{0,1,\cdots,N\}$, $a_i=L^{2M}b_i$ satisfies
Definition \ref{d3.1}(ii). Fix a point $x_0\in\cx$.
Let $\wz C(M)\ev\frac{2C(M)}{M+1}$, where $C(M)$ is as in \eqref{3.5}.
Then $\wz C(M)\int_0^\fz t^{2(M+2)}e^{-2t^2}\dt=1$.
By this and the $L^2$-functional calculus, for
$f=\sum_{i=0}^N\lz_ia_i\in L^2(\cx)$, we have
\begin{eqnarray*}
f&&=\wz C(M)\int_0^\fz (t^2L)^{M+2}e^{-2t^2L}f\dt\\
&&=\wz C(M)\int_{K_1}^\fz (t^2L)^{M+2}e^{-2t^2L}f\dt
+\wz C(M)\int_0^{K_1}\cdots\ev f_1+f_2,
\end{eqnarray*}
where $K_1$ is a \emph{positive constant} which is determined later.

Let us start with the term $f_1$. Set $\mu\ev N^{-1}\|f\|_\hx$.
Substituting $f=\sum_{i=0}^N\lz_ia_i$ into $f_1$, we have
\begin{equation*}
f_1=\wz C(M)\sum_{i=0}^N\lz_i\int_{K_1}^\fz (t^2L)^{M+2}e^{-2t^2L}a_i\dt
=\sum_{i=0}^N\mu_im_{i,K_1},
\end{equation*}
where $\mu_i\ev\wz C(M)\mu$, $m_{i,K_1}\ev L^Mf_{i,K_1}$, and
$$f_{i,K_1}\ev\mu^{-1}\lz_i\int_{K_1}^\fz t^{2(M+2)}L^2
e^{-2t^2L}a_i\dt.$$
Then, obviously, $\sum_{i=0}^N|\mu_i|=\sum_{i=0}^N\mu_i=C(M)\|f\|_\hx$.
We now claim that
for an appropriate choice of $K_1$ and $i\in\{0,1,\cdots,N\}$,
$m_{i,K_1}$ is a $\pme$-molecule adapted to the
ball $B_i$. Observe that $a_i=L^{2M}b_i$, for $i\in\{0,1,\cdots,N\}$.
By Minkowski's inequality, for $k\in\{0,1,\cdots,M\}$,
$i\in\{0,1,\cdots,N\}$ and $j\in \zz_+$,
\begin{eqnarray*}
&&\lf\|(r_{B_i}^2L)^kf_{i,K_1}\r\|_{L^2(U_j(B_i))}\\
&&\hs\le\mu^{-1}|\lz_i|\int_{K_1}^\fz t^{-2M}
\lf\|(t^2L)^{2(M+1)}e^{-2t^2L}(r_{B_i}^2L)^kb_i\r\|_{L^2(U_j(B_i))}\dt\\
&&\hs\le\mu^{-1}|\lz_i|\sum_{l=0}^\fz\int_{K_1}^\fz t^{-2M}
\lf\|(t^2L)^{2(M+1)}e^{-2t^2L}\lf(\chi_{U_l(B_i)}
\lf[(r_{B_i}^2L)^kb_i\r]\r)\r\|_{L^2(U_j(B_i))}\dt\\
&&\hs\ev\mu^{-1}|\lz_i|\sum_{l=0}^\fz\hl,
\end{eqnarray*}
where $U_l(B_i)$ for $l\in\zz_+$ is as in \eqref{2.5}.
When $l<j-1$, by Lemma \ref{l2.2},
$\mu(2^jB_i)\ls2^{n(j-l)}\mu(2^lB_i)$,
$\rho(\mu(2^jB_i))\ls2^{n(j-l)(1/\wz p_\Phi-1)}\rho(\mu(2^lB_i))$
and Definition \ref{d3.1}(ii),
we conclude that
\begin{eqnarray*}
\hl&&\ls\int_{K_1}^\fz t^{-2M}\lf\|(r_{B_i}^2L)^kb_i\r\|_{L^2(U_l(B_i))}
\lf(\frac t{2^jr_{B_i}}\r)^{\ez+n(1/\wz p_\Phi-1/2)}\dt\\
&&\ls\int_{K_1}^\fz
t^{-2M}r_{B_i}^{4M}2^{-l\ez}[\mu(2^lB_i)]^{-1/2}[\rho(\mu(2^lB_i))]^{-1}
\lf(\frac t{2^jr_{B_i}}\r)^{\ez+n(1/\wz p_\Phi-1/2)}\dt\\
&&\ls
r_{B_i}^{2M}2^{-j\ez}[\mu(2^jB_i)]^{-1/2}[\rho(\mu(2^jB_i))]^{-1}2^{-l(\ez+\mzx)}
\lf(\frac {r_{B_i}}{K_1}\r)^{2[M-\frac\ez2-\mzx]}.
\end{eqnarray*}
When $l\in\{j-1,j,j+1\}$, from Lemma \ref{l2.2}
and Definition \ref{d3.1}(ii), it follows that
\begin{eqnarray*}
\hl&&\ls\int_{K_1}^\fz t^{-2M}\lf\|(r_{B_i}^2L)^kb_i\r\|_{L^2(U_j(B_i))}\dt\\
&&\ls r_{B_i}^{2M}2^{-j\ez}[\mu(2^jB_i)]^{-1/2}[\rho(\mu(2^jB_i))]^{-1}
\lf(\frac {r_{B_i}}{K_1}\r)^{2M}.
\end{eqnarray*}
When $l>j+1$, by Lemma \ref{l2.2},
$\mu(2^jB_i)\ls \mu(2^lB_i)$, $\rho(\mu(2^jB_i))\ls\rho(\mu(2^lB_i))$
and Definition \ref{d3.1}(ii), we obtain
\begin{eqnarray*}
\hl&&\ls\int_{K_1}^\fz t^{-2M}\lf\|(r_{B_i}^2L)^kb_i\r\|_{L^2(U_l(B_i))}
\lf(\frac t{2^lr_{B_i}}\r)^{\ez}\dt\\
&&\ls r_{B_i}^{2M}2^{-j\ez}[\mu(2^jB_i)]^{-1/2}[\rho(\mu(2^jB_i))]^{-1}2^{-l\ez}
\lf(\frac {r_{B_i}}{K_1}\r)^{2M-\ez}.
\end{eqnarray*}
Combining these estimates, by choosing $K_1>\max\{r_{B_1},\cdots,r_{B_N}\}$,
we further conclude that there exists a positive constant $\wz C$,
independent of $i$, such that
\begin{eqnarray*}
&&\lf\|(r_{B_i}^2L)^kf_{i,K_1}\r\|_{L^2(U_j(B_i))}\\
&&\hs\le \wz Cr_{B_i}^{2M}2^{-j\ez}[\mu(2^jB_i)]^{-1/2}[\rho(\mu(2^jB_i))]^{-1}
\mu^{-1}|\lz_i|\lf(\frac {r_{B_i}}{K_1}\r)^{2[M-\frac\ez2-\mzx]}.
\end{eqnarray*}
Then, by choosing
$$K_1\ev
\max_{0\le i\le N}\lf\{r_{B_i}\lf[\wz C\mu^{-1}\max_{0\le i\le N}|\lz_i|\r]
^{\frac1{2[M-\frac\ez2-\mzx]}}\r\},$$
we see that for $i\in\{0,1,\cdots,N\}$, $m_{i,K_1}$ is a $\pme$-molecule
adapted to the ball $B_i$, which shows the claim.

We now consider the term $f_2$. Set $\mu\ev N^{-1}\|f\|_\hx$.
Substituting $f=\sum_{i=0}^N\lz_ia_i$ into $f_2$, we have
\begin{equation*}
f_2=\wz C(M)\sum_{i=0}^N\lz_i\int_0^{K_1}(t^2L)^{M+1}e^{-t^2L}(\tl a_i)\dt
=\sum_{i=0}^N\mu_im_{i,K_1},
\end{equation*}
where $\mu_i\ev C(M)\mu$, $m_{i,K_1}\ev L^Mf_{i,K_1}$, and
$$f_{i,K_1}\ev\mu^{-1}\lz_i\int_0^{K_1} t^{2(M+1)}L
e^{-t^2L}(\tl a_i)\dt.$$
Then, obviously, $\sum_{i=0}^N|\mu_i|=\sum_{i=0}^N\mu_i=C(M)\|f\|_\hx$.
We now claim that for $K_1$ as above and $i\in\{0,1,\cdots,N\}$,
$m_{i,K_1}$ is a $\pme$-molecule adapted to the
ball $2^{K_0}B_i$, where $K_0\in(0,\fz)$ is determined later.
To show the claim, for $i\in\{0,1,\cdots,N\}$ and $j\in \zz_+$,
set
$\boz_{j,K_0}\ev 2^{j+K_0+2}B_i\bh2^{j+K_0-2}B_i$
and write
\begin{eqnarray*}
f_{i,K_1}=&&\mu^{-1}\lz_i\int_0^{K_1} t^{2(M+1)}L
e^{-t^2L}\lf([\tl a_i]\chi_{\boz_{j,K_0}}\r)\dt\\
&&+\mu^{-1}\lz_i\int_0^{K_1} t^{2(M+1)}L
e^{-t^2L}\lf([\tl a_i]\chi_{\boz_{j,K_0}^\com}\r)\dt
\ev g_{i,K_1,K_0}+h_{i,K_1,K_0}.
\end{eqnarray*}
Then, by Minkowski's inequality, for $k\in\{0,1,\cdots,M\}$,
$i\in\{0,1,\cdots,N\}$ and $j\in \zz_+$,
\begin{eqnarray*}
&&\lf\|(2^{2K_0}r_{B_i}^2L)^kg_{i,K_1,K_0}\r\|_{L^2(U_j(2^{K_0}B_i))}\\
&&\hs\le\mu^{-1}|\lz_i|r_{B_i}^{2M}\lf\|\int_0^{K_1}
\lf(\frac t{r_{B_i}}\r)^{2M-2k}2^{2kK_0}\r.\\
&&\hs\hs\times\lf.
(t^2L)^{k+1}e^{-t^2L}\lf(\lf[\tl a_i\r]\chi_{\boz_{j,K_0}}\r)
\dt\r\|_{L^2(U_j(2^{K_0}B_i))}\\
&&\hs\le
\mu^{-1}|\lz_i|\sum_{l=0}^\fz\int_0^{K_1}\lf(\frac t{r_{B_i}}\r)^{2M-2k}2^{2kK_0}
\lf\|\chi_{U_l(2^{K_0}B_i)}
\tl a_i\r\|_{L^2(\boz_{j,K_0})}\dt\\
&&\hs\ev\mu^{-1}|\lz_i|\sum_{l=0}^\fz\hl.
\end{eqnarray*}
When $l<j-2$, from Lemma \ref{l2.2},
$\mu(2^{j+K_0}B_i)\ls2^{n(j-l)}\mu(2^{l+K_0}B_i)$,
$\rho(\mu(2^{j+K_0}B_i))\ls2^{n(j-l)
(1/\wz p_\Phi-1)}\rho(\mu(2^{l+K_0}B_i))$ and Definition \ref{d3.1}(ii),
it follows that
\begin{eqnarray*}
\hl&&\ls\int_0^{K_1}
\lf(\frac t{r_{B_i}}\r)^{2M-2k}2^{2kK_0}\lf\|a_i\r\|_{L^2(U_l(2^{K_0}B_i))}
\lf(\frac t{2^{j+K_0}r_{B_i}}\r)^{\ez+n(1/\wz p_\Phi-1/2)}\dt\\
&&\ls\int_0^{K_1} \lf(\frac t{r_{B_i}}\r)^{2M-2k}2^{2kK_0}r_{B_i}^{4M}
2^{-(l+K_0)\ez}[\mu(2^{l+K_0}B_i)]^{-1/2}[\rho(\mu(2^{l+K_0}B_i))]^{-1}\\
&&\hs\times\lf(\frac t{2^{j+K_0}r_{B_i}}\r)^{\ez+n(1/\wz p_\Phi-1/2)}\dt\\
&&\ls (2^{K_0}r_{B_i})^{2M}2^{-j\ez}[\mu(2^{j+K_0}B_i)]^{-1/2}
[\rho(\mu(2^{j+K_0}B_i))]^{-1}2^{-l[\ez+\mzx]}\\
&&\hs\times2^{-2K_0[M-k+\ez+\mzx]}
K_1^{2M-2k+\ez+n(1/\wz p_\Phi-1/2)}r_{B_i}^{2M+2k-\ez-n(1/\wz p_\Phi-1/2)}.
\end{eqnarray*}
When $l\in\{j-2,\cdots,j+2\}$, by Lemma \ref{l2.2} and Definition \ref{d3.1}(ii),
we see that
\begin{eqnarray*}
\hl&&\ls\int_0^{K_1} \lf(\frac t{r_{B_i}}\r)^{2M-2k}2^{2kK_0}
\lf\|a_i\r\|_{L^2(U_j(2^{K_0}B_i))}\dt\\
&&\ls (2^{K_0}r_{B_i})^{2M}2^{-j\ez}[\mu(2^{j+K_0}B_i)]^{-1/2}
[\rho(\mu(2^{j+K_0}B_i))]^{-1}2^{-2K_0(M-k+\ez/2)}
K_1^{2M-2k}r_{B_i}^{2M+2k}.
\end{eqnarray*}
When $l>j+2$, from Lemma \ref{l2.2},
$\mu(2^jB_i)\ls \mu(2^lB_i)$, $\rho(\mu(2^{j+K_0}B_i))\ls\rho(\mu(2^{l+K_0}B_i))$
and Definition \ref{d3.1}(ii), we infer that
\begin{eqnarray*}
\hl&&\ls\int_0^{K_1}
\lf(\frac t{r_{B_i}}\r)^{2M-2k}2^{2kK_0}\lf\|a_i\r\|_{L^2(U_l(2^{K_0}B_i))}
\lf(\frac t{2^{l+K_0}r_{B_i}}\r)^{\ez}\dt\\
&&\ls\int_0^{K_1} \lf(\frac t{r_{B_i}}\r)^{2M-2k}2^{2kK_0}r_{B_i}^{4M}
2^{-(l+K_0)\ez}[\mu(2^{l+K_0}B_i)]^{-1/2}[\rho(\mu(2^{l+K_0}B_i))]^{-1}\\
&&\hs\times\lf(\frac t{2^{l+K_0}r_{B_i}}\r)^{\ez}\dt\\
&&\ls (2^{K_0}r_{B_i})^{2M}2^{-j\ez}[\mu(2^{j+K_0}B_i)]^{-1/2}
[\rho(\mu(2^{j+K_0}B_i))]^{-1}2^{-l\ez}\\
&&\hs\times2^{-2K_0(M-k+\ez)}K_1^{2M-2k+\ez}r_{B_i}^{2M+2k-\ez}.
\end{eqnarray*}

Then we estimate $h_{i,K_1,K_0}$. By Minkowski's inequality
and Definition \ref{d3.1}(ii), for $k\in\{0,1,\cdots,M\}$,
$i\in\{0,1,\cdots,N\}$ and $j\in \zz_+$, we conclude that
\begin{eqnarray*}
&&\lf\|(2^{2K_0}r_{B_i}^2L)^kh_{i,K_1,K_0}\r\|_{L^2(U_j(2^{K_0}B_i))}\\
&&\hs\le\mu^{-1}|\lz_i|r_{B_i}^{2M}\lf\|\int_0^{K_1}
\lf(\frac t{r_{B_i}}\r)^{2M-2k}2^{2kK_0}\r.\\
&&\hs\hs\times\lf.
(t^2L)^{k+1}e^{-t^2L}\lf(\lf[\tl a_i\r]
\chi_{\boz_{j,K_0}^\com}\r)\dt\r\|_{L^2(U_j(2^{K_0}B_i))}\\
&&\hs\le\mu^{-1}|\lz_i|\int_0^{K_1}\lf(\frac t{r_{B_i}}\r)^{2M-2k}2^{2kK_0}
\lf(\frac t{2^{j+K_0}r_{B_i}}\r)^{\ez+n(1/\wz p_\Phi-1/2)}
\lf\|\tl a_i\r\|_{L^2(\cx)}\dt\\
&&\hs\ls (2^{K_0}r_{B_i})^{2M}2^{-j\ez}[\mu(2^{j+K_0}B_i)]^{-1/2}
[\rho(\mu(2^{j+K_0}B_i))]^{-1}\\
&&\hs\hs\times2^{-2K_0[M-k+\ez+\mzx]}
K_1^{2M-2k+\ez+n(1/\wz p_\Phi-1/2)}r_{B_i}^{2M+2k-\ez-n(1/\wz p_\Phi-1/2)}.
\end{eqnarray*}

Combining these estimates, by choosing $K_1>\max\{r_{B_1},\cdots,r_{B_N}\}$,
we further see that
\begin{eqnarray*}
\lf\|(2^{2K_0}r_{B_i}^2L)^kf_{i,K_1}\r\|_{L^2(U_j(2^{K_0}B_i))}
&&\ls(2^{K_0}r_{B_i})^{2M}2^{-j\ez}[\mu(2^{j+K_0}B_i)]^{-1/2}
[\rho(\mu(2^{j+K_0}B_i))]^{-1}\\
&&\hs\hs\times2^{-2K_0(M-k+\ez/2)}K_1^{2M-2k+\ez+\mzx}r_{B_i}^{2M+2k}.
\end{eqnarray*}
Then, by choosing
$$K_0\ev\max_{0\le k\le M}\lf(\frac {\ln\Big(K_1^{2M-2k+\ez+\mzx}
{\max_{0\le i\le N}}\{r_{B_i}^{2M+2k}\}\Big)}{2\ln2(M-k+\ez/2)}\r),$$
we conclude that for $i\in\{0,1,\cdots,N\}$, $m_{i,K_1}$ is
a $\pme$-molecule adapted to the ball $2^{K_0}B_i$, which shows
the claim, and hence completes the proof of Theorem \ref{t3.1}.
\end{proof}

\begin{remark}\rm\label{r3.1} We point out that the proof of
Theorem \ref{t3.1} also works for \cite[Theorem 5.4]{hlmmy}.
Moreover, due to the lack of the support of molecules,
we show that $m_{i, K_1}$ for $i\in\{1,\cdots, N\}$
is a $\pme$-molecule adapted to
the ball $2^{K_0}B_i$, instead of $B_i$ as in the proof of
\cite[Theorem 5.4]{hlmmy}, which also simplifies the proof
of \cite[Theorem 5.4]{hlmmy}.
\end{remark}

By Theorem \ref{t3.1}, the argument same as the proofs
of \cite[Theorems 3.13 and 3.16]{al11}, we obtain the following
dual theorem. We omit the details.

\begin{theorem}\label{t3.2}
Let $L$, $\Phi$, $\rho$ and $M$ be as in Definition \ref{d3.2}. Then for any
function $f\in\bmol$, the linear functional $\ell$, defined by
$\ell(g)\ev\la f,g\ra$
initially on $H_{\Phi,{\rm fin},L^*}^{\rm{mol},\ez,2\wz M}(\cx)$ with $\wz M>M$
and $\ez\in(0,\wz M-\mz)$, has a unique extension to $\hxx$ and, moreover,
$\|\ell\|_{(\hxx)^*}\le C\|f\|_\bmol$
for some nonnegative constant $C$ independent of $f$.

Conversely, for any $\ell\in(\hxx)^*$, there exists $f\in\bmol$
such that
$\ell(g)\ev\la f,g\ra$
for all $g\in H_{\Phi,fin,L^*}^{mol,\ez, M}(\cx)$ and $\|f\|_\bmol\le C
\|\ell\|_{(\hxx)^*}$, where $C$ is a nonnegative constant independent of $\ell$.
\end{theorem}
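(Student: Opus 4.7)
The plan is to prove both implications separately, with both routed through Theorem \ref{t3.1} to pass between molecular scales. For the forward direction, fix $f\in\bmol$ and a finite combination $g=\sum_{i=0}^N\lz_i a_i\in H_{\Phi,\mathrm{fin},L^*}^{\mathrm{mol},\ez,2\wz M}(\cx)$ of $(\Phi,2\wz M,\ez)_{L^*}$-molecules. I would apply Theorem \ref{t3.1} with $\wz M$ in the role of $M$ and $L^*$ in place of $L$ (legitimate by Remark \ref{r2.1}) to rewrite $g=\sum_j\mu_j m_j$, where the $m_j$'s are $(\Phi,\wz M,\ez)_{L^*}$-molecules and $\sum_j|\mu_j|\le C\|g\|_\hxx$. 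The target inequality then reduces to the uniform molecular estimate
\begin{equation*}
|\la f,m\ra|\le C\|f\|_\bmol
\end{equation*}
for every $(\Phi,\wz M,\ez)_{L^*}$-molecule $m$ adapted to a ball $B$.

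For this uniform estimate, I would decompose $m=\sum_{j=0}^\fz\chi_{U_j(B)}m$ and, for each piece, use the identity
\begin{equation*}
\la f,\chi_{U_j(B)}m\ra=\lf\la\lf(I-e^{-(2^jr_B)^2L}\r)^Mf,\chi_{U_j(B)}m\r\ra+\sum_{k=1}^M(-1)^{k+1}\binom{M}{k}\la f,e^{-k(2^jr_B)^2L^*}\chi_{U_j(B)}m\ra,
\end{equation*}
obtained from the binomial expansion $I=(I-e^{-tL^*})^M+\sum_{k=1}^M(-1)^{k+1}\binom{M}{k}e^{-ktL^*}$ applied to $\chi_{U_j(B)}m$ and then moved by duality on the $(I-e^{-tL^*})^M$ piece. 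The first term is controlled by the very definition of $\bmol$, combined with Cauchy--Schwarz on $2^jB$ and the molecular size bound, yielding a contribution $\ls 2^{-j\ez}\|f\|_\bmol$ that is summable in $j$. For each semigroup piece, iterate the same scheme: writing $m=L^{*\wz M}\nu$ and further annular-decomposing $\chi_{U_j(B)}m$ via the information on $\nu$, Lemma \ref{l2.2} identifies $e^{-k(2^jr_B)^2L^*}\chi_{U_j(B)}m$ as a geometric series of multiples of $(\Phi,\wz M,\ez)_{L^*}$-molecule-type objects concentrating on $2^jB$-scale balls, against which $f\in\bmol$ again pairs uniformly. The ranges $\wz M>M$ and $\ez\in(0,\wz M-\mz)$ guarantee that the resulting double geometric series converges absolutely; summing over $j$ and over $\{m_j\}$ finishes the forward direction.

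For the converse, I would use that finite linear combinations of molecules are dense in $\hxx$ (the $L^*$-analogue of \cite[Corollary 3.8]{al11}), so $\ell\in(\hxx)^*$ is uniquely determined by its values there. For any ball $B$ and $\psi\in L^2(B)$, Lemma \ref{l2.2} shows that $(I-e^{-r_B^2L^*})^M\psi$, after dividing by the normalizing factor $\rho(\mu(B))[\mu(B)]^{1/2}\|\psi\|_{L^2(B)}$, is a constant multiple of a $(\Phi,M,\ez)_{L^*}$-molecule adapted to $B$. Hence
\begin{equation*}
\psi\longmapsto\ell\lf(\lf(I-e^{-r_B^2L^*}\r)^M\psi\r)
\end{equation*}
is a bounded linear functional on $L^2(B)$ with norm $\le C\rho(\mu(B))[\mu(B)]^{1/2}\|\ell\|_{(\hxx)^*}$. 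The Riesz representation theorem then produces $f_B\in L^2(B)$ realizing it; compatibility across nested balls (using that $(I-e^{-r_B^2L^*})^M$ annihilates nothing nontrivial inside $\mlx$ by the functional calculus) glues the $f_B$'s into a single $f\in\cm^M_{\Phi,L}(\cx)$ with $\la f,g\ra=\ell(g)$ on the dense subspace, and the bound just derived reads off as $\|f\|_\bmol\le C\|\ell\|_{(\hxx)^*}$.

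The principal technical obstacle is the uniform molecular pairing estimate of the forward direction. Because molecules are not compactly supported, one cannot simply localize the $\bmol$-condition; one must instead balance the molecular decay $2^{-j\ez}$ against the growth $\rho(\mu(2^jB))[\mu(2^jB)]^{1/2}$, which is polynomial in $2^j$, and carefully handle the cross-terms produced when the semigroup interacts with the annular cutoffs via the off-diagonal Davies--Gaffney estimates of Lemma \ref{l2.2}. The explicit ranges $\wz M>M$ and $\ez\in(0,\wz M-\mz)$ appearing in the statement are precisely those needed to make both the $\bmol$-piece and the semigroup-error piece sum absolutely.
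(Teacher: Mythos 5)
Your forward-direction reduction is correct and matches the route the paper indicates: apply Theorem~\ref{t3.1} (with $L^*$ and $\wz M$ in place of $L$ and $M$) to pass from $(\Phi,2\wz M,\ez)_{L^*}$-molecules to a representation $g=\sum_j\mu_j m_j$ with $\sum_j|\mu_j|\ls\|g\|_\hxx$, and then reduce to the uniform molecular pairing estimate $|\la f,m\ra|\ls\|f\|_\bmol$. However, your treatment of the semigroup-error piece $\sum_{k=1}^M(-1)^{k+1}\binom Mk\la f,e^{-k(2^jr_B)^2L^*}\chi_{U_j(B)}m\ra$ is not a proof: ``iterate the same scheme'' produces new semigroup terms at each step and never terminates. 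What is actually needed there is a different mechanism, e.g.\ exploiting $m=(L^*)^{\wz M}b$ to write $e^{-tL^*}\chi_{U_j(B)}m$ as $(L^*)$-powers acting on annular pieces of $b$ and then invoking the off-diagonal $L^2$ estimates of Lemma~\ref{l2.2} for $\{(t^2L)^ke^{-t^2L}\}$, or an identity of the type \eqref{3.14} that converts $I-(I-e^{-tL^*})^M$ into negative powers of $L^*$ times $(I-e^{-tL^*})^M$. This is the genuine technical core of the argument and your sketch does not carry it.

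The converse direction has a more structural gap. Your plan is to use Riesz representation to produce $f_B\in L^2(B)$ with $\la f_B,\psi\ra=\ell((I-e^{-r_B^2L^*})^M\psi)$ and then glue. But $f_B$ is a representative of $\iem\ell$ restricted to $B$, not of $\ell$ itself, and for nested balls $B\subset B'$ the operators $\Iem$ and $(I-e^{-r_{B'}^2L})^M$ are different; the corresponding $f_B$ and $f_{B'}$ do not agree on $B$, so there is no gluing to be done, and the parenthetical ``annihilates nothing nontrivial'' is not the relevant obstruction. The correct and in fact simpler route, which this paper (following \cite{al11}) uses: first observe that $\ell\in\cm^M_{\Phi,L}(\cx)=\bigcap_\ez(\mlx)^*$ is automatic, since any $\phi\in\mlx$ of unit norm is a $\pmx$-molecule and hence lies in the unit ball of $\hxx$ up to a uniform constant, so $|\ell(\phi)|\ls\|\ell\|_{(\hxx)^*}$. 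Once $\ell\in\cm^M_{\Phi,L}(\cx)$, the function $\Iem\ell$ is already a well-defined $L^2_{\rm loc}$ object (the discussion following \eqref{3.3}), and the required $\bmol$ bound is a direct duality computation: test $\Iem\ell$ against $\psi\in L^2(B)$ and use that $\Iemx\psi$ is, up to the factor $\rho(\mu(B))[\mu(B)]^{1/2}\|\psi\|_{L^2(B)}$ and a universal constant, a $\pmx$-molecule adapted to $B$. No Riesz representation or gluing is needed.
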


\begin{remark}\rm\label{r3.2}
(i) Theorem \ref{t3.1} is just \cite[Theorems 3.15]{al11} but with the ranges
of indices $M$ and $\ez$ replaced, respectively, by $M>\mz$ and $\ez\in(0,M-\mz)$.

(ii) By Theorem \ref{t3.2}, we see that for all $M>\mz$, the spaces $\bmol$ for
different $M$ coincide with equivalent norms; thus, in what follows,
we denote $\bmol$ simply by $\bmo$.
\end{remark}

The following two propositions are just \cite[Propositions 3.11 and 3.12]{al11}
(see also \cite[Propositions 4.4 and 4.5]{jy}).

\begin{proposition}\label{p3.1}
Let $L$, $\Phi$, $\rho$ and $M$ be as in Definition \ref{d3.2}. Then
$f\in\bmo$ if and only if $f\in\cm^M_{\Phi,L}(\cx)$ and
$$\sup_{B\subset\cx}\frac1{\rho(\mu(B))}\lf[\frac1{\mu(B)}
\int_B\lf|\lf[I-(I+r_B^2L)^{-1}\r]^Mf(x)\r|^2\,d\mu(x)\r]^{1/2}<\fz.$$
Moreover, the quantity appeared in the left-hand side of the above
formula is equivalent to $\|f\|_{\bmol}$.
\end{proposition}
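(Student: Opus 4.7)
The plan is to prove the two quantitative inequalities
$$
\sup_B\frac{1}{\rho(\mu(B))}\lf[\frac{1}{\mu(B)}\int_B\lf|\Ilm f(x)\r|^2\,d\mu(x)\r]^{1/2}\ls\|f\|_\bmol
$$
and its converse, since both expressions are already declared to act on $f\in\cm^M_{\Phi,L}(\cx)$ and the last paragraph before Definition \ref{d3.1} ensures that $\Iem f$ and $\Ilm f$ both make sense in $L^2_{\rm loc}(\cx)$. The two directions are symmetric and both reduce to comparing $\Iem$ with $\Ilm$ via $L^2$-functional calculus; the essential input is Lemma \ref{l2.2}, which guarantees that both semigroup and resolvent families obey Davies-Gaffney bounds.

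First I would rewrite one operator in terms of the other by subordination. Binomially expanding $\Ilm=\sum_{k=0}^M\binom{M}{k}(-1)^k(I+r_B^2L)^{-k}$ and inserting
$$
(I+r_B^2L)^{-k}=\frac{1}{\Gamma(k)}\int_0^\fz s^{k-1}e^{-s}e^{-sr_B^2L}\,ds,
$$
one expresses $\Ilm f$ as an $L^1(ds)$-average of heat-semigroup expressions at scales $sr_B$; grouping terms and using $1-e^{-u}=\int_0^u e^{-\tau}d\tau$, this can further be recast as an integral of $[I-e^{-\tau r_B^2L}]^M$ against an integrable weight in $\tau$. The reverse identity (used for the other direction) comes from $e^{-z}=\lim_n(1+z/n)^{-n}$, or more quantitatively from writing $[I-e^{-r_B^2L}]^M$ as a contour integral whose kernel is expanded in resolvents. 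Both expansions factor through $(r_B^2L)^M$ and hence are uniformly bounded on $L^2(\cx)$ with constants independent of $B$.

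The substantive part is to pass to the $L^2(B)$ norm. For a fixed scale $\tau$, the hypothesis $f\in\bmol$ controls $[I-e^{-\tau r_B^2L}]^Mf$ on the ball $B_\tau$ of radius $\sqrt{\tau}\,r_B$. To get back to $B$, decompose $B$ into its intersections with annuli centered at $B_\tau$ (or vice versa in the small-$\tau$ regime) and apply the off-diagonal estimates of Lemma \ref{l2.2} to each piece. Summing over these annuli produces factors of the form $2^{-j\ez}$ from Davies-Gaffney decay, competing against the growth $\rho(\mu(2^jB))/\rho(\mu(B))\ls 2^{jn(1/\wz p_\Phi-1)}$ (using, as in the proof of Theorem \ref{t3.1}, that $\rho$ is of upper type $1/\wz p_\Phi-1$ for some $\wz p_\Phi\in(0,p_\Phi^-)$). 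The hypothesis $M>\mz$ exactly ensures that one may choose $\ez>n(1/\wz p_\Phi-1/2)$ large enough for the annular sums to converge and to be dominated by $\|f\|_\bmol$.

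The main obstacle is the scale-uniformity: the integral in $s$ (or $\tau$) runs over all of $(0,\fz)$, so one has to control the small-$s$ and large-$s$ regimes separately. For $s$ small, $sr_B^2L$ is close to $0$ and $[I-e^{-sr_B^2L}]^M$ provides an $s^M$-smallness that tames the singularity; for $s$ large, Davies-Gaffney off-diagonal decay in $s$ (with $s^{-M}$-type gain from the functional-calculus representation) compensates the growth of $\rho(\mu(B_s))$. The entire analysis runs in parallel to the proofs of \cite[Propositions 3.11, 3.12]{al11} and \cite[Propositions 4.4, 4.5]{jy}, which, as stated, are quoted here without repeating the details.
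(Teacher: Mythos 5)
The paper itself gives no proof of Proposition~\ref{p3.1}: it quotes the result from \cite[Propositions 3.11, 3.12]{al11} and \cite[Propositions 4.4, 4.5]{jy}. Your sketch correctly identifies the key ingredients---Lemma~\ref{l2.2} for Davies--Gaffney decay of both the semigroup and resolvent families, an annular decomposition whose geometric decay must beat the growth $\rho(\mu(2^jB))/\rho(\mu(B))\ls 2^{jn(1/\wz p_\Phi-1)}$, and the condition $M>\mz$ that makes the resulting series converge---and this matches the technique the paper itself uses to prove the closely related Proposition~\ref{p3.4}, where exactly this passage between $\Iem$ and $\Ilm$ appears.

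The imprecise spot is the subordination step. After binomially expanding $\Ilm$ and inserting the Gamma-representation of $(I+r_B^2L)^{-k}$, the $k=0$ term is $f$ itself, which the semigroup quantity does not control on a ball; the assertion that ``grouping terms and using $1-e^{-u}=\int_0^ue^{-\tau}\,d\tau$'' recasts the whole expression as an integral of $[I-e^{-\tau r_B^2L}]^M f$ against an integrable weight is not an exact identity, and this is precisely the crux. If one instead uses $I-(I+r_B^2L)^{-1}=\int_0^\fz e^{-s}(I-e^{-sr_B^2L})\,ds$ and raises to the $M$-th power, one obtains a multi-index integral of products $\prod_{i=1}^M(I-e^{-s_ir_B^2L})$; dominating such a product by $[I-e^{-s_{\max}r_B^2L}]^M$ requires the correction operator to be both $L^2$-bounded and Davies--Gaffney, and the latter is not automatic for operator ratios. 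The cleaner route, used in the references and also (for the VMO version) in the paper's own proof of Proposition~\ref{p3.4}, is the algebraic identity \eqref{3.14}, namely $I-\Ilm=\sum_{j=1}^M\binom Mj(r_B^2L)^{-j}\Ilm$, combined with the factorization $(r_B^2L)^{-j}\Iem=\lf[\int_0^1 e^{-ur_B^2L}\,du\r]^j(I-e^{-r_B^2L})^{M-j}$; these pass explicitly through finite-scale averages of heat semigroups, each of which inherits Davies--Gaffney directly from Lemma~\ref{l2.2}. Substituting this for your subordination step, the rest of your outline goes through.
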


\begin{proposition}\label{p3.2}
Let $L$, $\Phi$, $\rho$ and $M$ be as in Definition \ref{d3.2}. Then there
exists a positive constant $C$ such that for all $f\in\bmo$,
$$\sup_{B\subset\cx}\frac1{\rho(\mu(B))}\lf[\frac1{\mu(B)}\iint_{\wh B}
|(t^2L)^Me^{-t^2L}f(x)|^2\,\frac{d\mu(x)\,dt}t\r]^{1/2}\le C\|f\|_{\bmol}.$$
\end{proposition}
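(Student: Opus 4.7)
My plan is to prove Proposition~\ref{p3.2} by a duality argument, using Theorem~\ref{t3.2} together with the tent-space atomic theory. Fix a ball $B\st\cx$ and set
$$I(B):=\iint_{\wh B}\lf|(t^2L)^Me^{-t^2L}f(x)\r|^2\,\frac{d\mu(x)\,dt}{t}.$$
By $L^2(\wh B,\,d\mu\,dt/t)$-duality,
$$I(B)^{1/2}=\sup_{g}\lf|\iint_{\wh B}(t^2L)^Me^{-t^2L}f(x)\,\ov{g(x,t)}\,\frac{d\mu(x)\,dt}{t}\r|,$$
where the supremum runs over measurable $g$ supported in $\wh B$ with $\iint_{\wh B}|g|^2\,d\mu\,dt/t\le1$. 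For any such $g$, the rescaled function $g/[\rho(\mu(B))[\mu(B)]^{1/2}]$ satisfies the defining properties of a $\tx$-atom recalled before Lemma~\ref{l3.1}, whence $\|g\|_\tx\le\rho(\mu(B))[\mu(B)]^{1/2}$.

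Next I would transfer the operator to the opposite side of the pairing via the adjoint, writing
$$\iint_{\wh B}(t^2L)^Me^{-t^2L}f\cdot\ov g\,\frac{d\mu\,dt}{t}=\la f,\,\pi_{L^*,M}(g)\ra,$$
where
$$\pi_{L^*,M}(g)(x):=\int_0^\fz (t^2L^*)^Me^{-t^2L^*}\lf(g(\cdot,t)\r)(x)\,\frac{dt}{t}.$$
The Calder\'on reproducing formula (Proposition~\ref{p3.3}) together with its accompanying molecular analysis show that $\pi_{L^*,M}$ sends each $\tx$-atom to a bounded multiple of a $\pmx$-molecule, and hence maps $\tx$ into $\hxx$ boundedly. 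Consequently $\|\pi_{L^*,M}(g)\|_\hxx\le C\|g\|_\tx\le C\rho(\mu(B))[\mu(B)]^{1/2}$. Applying the $\bmo$--$\hxx$ duality of Theorem~\ref{t3.2},
$$\lf|\la f,\,\pi_{L^*,M}(g)\ra\r|\le C\|f\|_\bmol\,\|\pi_{L^*,M}(g)\|_\hxx\le C\|f\|_\bmol\,\rho(\mu(B))[\mu(B)]^{1/2},$$
and dividing by $\rho(\mu(B))[\mu(B)]^{1/2}$ then taking the supremum over $B$ yields the stated estimate.

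The main technical obstacle is the rigorous justification of the adjoint-transfer identity, since $f\in\bmo\subset(\mlx)^*$ is merely a distributional object and the pairing $\la f,\pi_{L^*,M}(g)\ra$ must be interpreted via the duality extension of Theorem~\ref{t3.2}. I would handle this by first truncating the $t$-integral defining $\pi_{L^*,M}(g)$ to $t\in[\dz,\dz^{-1}]$: the tent-atomic decomposition of $g$ from Lemma~\ref{l3.1}, combined with the $L^2$ Davies--Gaffney estimates of Lemma~\ref{l2.2} applied to $(t^2L^*)^Me^{-t^2L^*}$, ensures the truncated output is a finite, $L^2$-convergent linear combination of $\pmx$-molecules, for which the adjoint identity follows immediately from Fubini's theorem and the $L^2$-functional calculus of $L$. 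Passing to the limit $\dz\to0^+$ using convergence in $\hxx$ together with the density of $\hmflx$ in $\hxx$ (obtained via Theorem~\ref{t3.1}) then completes the justification.
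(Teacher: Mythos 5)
Your argument is clean in outline and the individual steps (rescaling $g$ to a $\tx$-atom, the boundedness of $\pi_{L^*,M}\colon\tx\to\hxx$, the duality estimate) are each correct as stated. The problem is that the argument is circular within the logical chain the paper is drawing on. Proposition \ref{p3.2} is imported from \cite[Proposition~3.12]{al11}, and Theorem \ref{t3.2} from \cite[Theorems~3.13 and 3.16]{al11}; in that source (and in the Hofmann--Mayboroda and Jiang--Yang prototypes) the Carleson-measure estimate is proved \emph{first}, and the inclusion $\bmol\subset(\hxx)^*$ in the duality theorem is then established precisely by pairing $(t^2L)^Me^{-t^2L}f$ against a tent-space atomic decomposition of $t^2L^*e^{-t^2L^*}g$ and invoking this very Carleson bound on each atom. (This is visible in the present paper too: the proof of Theorem \ref{t3.3}, (ii)$\Rightarrow$(i), which is the same pairing computation, bounds $\sum_j|\lz_j|\|a_j\|_{T_2^2}\bigl(\iint_{\wh B_j}|\tml f|^2\dxt\bigr)^{1/2}$ by $\|d\mu_f\|_\rho$.) So you cannot use Theorem \ref{t3.2} to prove Proposition \ref{p3.2} without first having Proposition \ref{p3.2}.

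The intended proof is direct and does not pass through duality. Fix a ball $B$ of radius $r_B$ and split $f$ using $I=[I-(I-e^{-r_B^2L})^M]+(I-e^{-r_B^2L})^M$. For the piece $(I-e^{-r_B^2L})^Mf$, decompose over the annuli $U_j(B)$: the local terms ($j\le 1$) are handled by the $L^2$-boundedness of the square function $S_L^M$ (Lemma \ref{l2.3}) together with the $\bmol$ control of $\|(I-e^{-r_B^2L})^Mf\|_{L^2(2B)}$, and the far terms use the Davies--Gaffney decay of $(t^2L)^Me^{-t^2L}$ (Lemma \ref{l2.2}) plus the covering lemma (Lemma \ref{l2.1}) to sum the annular contributions. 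For the complementary piece, expand $I-(I-e^{-r_B^2L})^M$ as a combination of $e^{-kr_B^2L}$, $k=1,\dots,M$, and observe that $(t^2L)^Me^{-t^2L}e^{-kr_B^2L}=(t^2L)^Me^{-(t^2+kr_B^2)L}$ gains a factor comparable to $(t/r_B)^{2M}$ for $t\le r_B$, which makes $\int_0^{r_B}\frac{dt}{t}$ converge. This pattern is essentially what you see in \eqref{3.17}--\eqref{3.18} of the proof of Theorem \ref{t3.4}. Two smaller remarks: the mapping property $\pi_{L^*,M}\colon\tx\to\hxx$ is Proposition \ref{p4.2}(ii) (or \cite[Proposition~3.6]{al11}), not a consequence of Proposition \ref{p3.3}; and even setting aside circularity, your truncation argument for the adjoint-transfer identity would still need to be carried out in full, since $f$ lives only in $(\cm_\Phi^{M,\ez}(L^*))^*$.
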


The following Proposition \ref{p3.3} and Lemma \ref{l3.2}
are a kind of Calder\'on reproducing formulae.

\begin{proposition}\label{p3.3}
Let $L$, $\Phi$, $\rho$ and $M$ be as in Definition \ref{d3.2},
$\ez, \ez_1\in(0,\fz)$
and $\wz M>M+\ez_1+\frac n4+\frac N2(\frac 1{p_\Phi^-}-1)$, where
$N$ is as in \eqref{2.4}. Fix $x_0\in\cx$. Assume that
$f\in\cm_{\Phi,L}^{M}(\cx)$ satisfies that
\begin{equation}\label{3.6}
\int_\cx
\frac{|(I-(I+L)^{-1})^Mf(x)|^2}{1+[d(x,x_0)]^{n+\ez_1+2N(1/p_\Phi^--1)}}
\,d\mu(x)<\fz.
\end{equation}
Then for all $\pml$-molecules $\az$,
$$\la f,\az\ra=C(M)\iint_\xt\tml f(x) \ov{\tlx\az(x)}\dxt,$$
where $C(M)$ is as in \eqref{3.5}.
\end{proposition}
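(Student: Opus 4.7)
The plan is to deduce the identity from the $L^2$-Calder\'on reproducing formula applied to $\az$ and then transfer the inner operators across the pairing $\la f,\cdot\ra$ via a truncation-and-limit argument. The starting point is that, by the bounded $H_\fz$-calculus of $L^*$ together with the normalization \eqref{3.5},
$$\az=C(M)\int_0^{\fz}(t^2L^*)^{M+1}e^{-2t^2L^*}\az\dt$$
in $L^2(\cx)$. For $0<\eta<R<\fz$, set
$$\az_{\eta,R}\ev C(M)\int_\eta^R(t^2L^*)^{M+1}e^{-2t^2L^*}\az\dt\in L^2(\cx).$$

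First I would show that $\az_{\eta,R}\to\az$ in some $\mlx$ with $\ez>n(1/p_\Phi^--1/p_\Phi^+)$, so that $f\in\cm^M_{\Phi,L}(\cx)\st(\mlx)^*$ pairs continuously against $\az_{\eta,R}-\az$. This is done by fixing $x_0$, decomposing $\cx$ into the annuli $\{U_j(B(x_0,1))\}_{j\ge0}$, applying the Davies--Gaffney bounds from Lemma \ref{l2.2} for the family $\{(t^2L^*)^{M+1}e^{-2t^2L^*}\}_{t>0}$, and combining with the $(\Phi,\wz M,\ez)_{L^*}$-molecular estimates for $\az$; the largeness of $\wz M$ is exactly what gives absolute convergence of the resulting dyadic sums in $j$ over the truncated $t$-ranges $(0,\eta)$ and $(R,\fz)$. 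This yields $\la f,\az_{\eta,R}\ra\to\la f,\az\ra$ as $\eta\to0^+$ and $R\to\fz$.

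Next, for each fixed $t>0$, the factorization $(t^2L^*)^{M+1}e^{-2t^2L^*}\az=(t^2L^*)^Me^{-t^2L^*}[\tlx\az]$ lies in $L^2(\cx)$. Extending the distributional interpretation of $(I-A_t)^Mf$ given between \eqref{3.4} and Definition \ref{d3.2} to $\tml f$ via the bounded $H_\fz$-calculus of $L$, one has, for each such $t$,
$$\la f,(t^2L^*)^{M+1}e^{-2t^2L^*}\az\ra=\la\tml f,\tlx\az\ra.$$
A Fubini argument on $[\eta,R]\times\cx$, where all factors are $L^2$-controlled, then gives
$$\la f,\az_{\eta,R}\ra=C(M)\int_\eta^R\int_\cx\tml f(x)\,\overline{\tlx\az(x)}\,d\mu(x)\dt,$$
and the asserted identity follows upon letting $\eta\to0^+$ and $R\to\fz$, provided the double integral on $\xt$ converges absolutely.

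The main obstacle is this last step. In contrast to the atomic Calder\'on formula \cite[Proposition 4.6]{jy}, where compact support localises everything to a single ball, here $\xt$ must be partitioned according to $t\ls r_B$ versus $t\gs r_B$ and the $x$-integral further decomposed into the annuli $\{U_j(B)\}_{j\ge0}$ with $B$ the ball adapted to $\az$; the off-diagonal $L^2$-decay of $\tlx\az$, extracted from Lemma \ref{l2.2} and Definition \ref{d3.1}(ii), must be paired against the polynomial weight $(1+d(x,x_0))^{n+\ez_1+2N(1/p_\Phi^--1)}$ present in the hypothesis \eqref{3.6} on $f$, using \eqref{2.4} to compare $V(x_0,\cdot)$ with $V(x,\cdot)$. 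The lower bound $\wz M>M+\ez_1+\frac n4+\frac N2(\frac1{p_\Phi^-}-1)$ is precisely the arithmetic threshold needed for the resulting geometric and polynomial-weighted series in $(j,t)$ to be absolutely summable; pinning down this threshold correctly is the technical heart of the proof.
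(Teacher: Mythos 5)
Your outline is structurally aligned with the paper's up to the truncation $\int_\eta^R$, but the way you propose to control the two tails $\eta\to 0^+$ and $R\to\fz$ does not match what the hypotheses actually allow, and the gap is precisely at the small-$t$ end. You claim that ``the largeness of $\wz M$ is exactly what gives absolute convergence of the resulting dyadic sums in $j$ over the truncated $t$-ranges $(0,\eta)$ and $(R,\fz)$.'' For $(R,\fz)$ this is indeed the role $\wz M$ plays (it shows up in the paper's estimate of $\mj$ as the factor $t^{-2(\wz M-k)-1}$). For $(0,\eta)$ it is false: writing $\az=(L^*)^{\wz M}b$ with $b\in\cd((L^*)^{\wz M})$, the operator $(L^*)^k$ applied to the truncated piece produces $\int_0^\eta(t^2L^*)^{M+1}e^{-2t^2L^*}(L^*)^{k+\wz M-M}b\dt$; for $k=M$ the absolute-value bound is $\int_0^\eta t^{-1}\|(t^2L^*)^{M+1}e^{-2t^2L^*}(L^*)^{\wz M}b\|_{L^2}\,dt$, and increasing $\wz M$ does nothing to tame this (if anything, it makes naive bounds worse, since you must load one more power of $L^*$ onto $b$). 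Consequently $\az_{\eta,R}\to\az$ cannot be established by absolute-value estimates in $t$, nor can the double integral $\iint_\xt|\tml f(x)||\tlx\az(x)|\dxt$ be shown absolutely convergent near $t=0$: the Carleson-measure control of $\tml f$ at small $t$ is exactly what Theorem \ref{t3.3} \emph{derives} from Proposition \ref{p3.3}, not something available going in. The threshold on $\wz M$ in the statement is for the large-$t$ tail only; nothing in your proposal plays the role that the paper's repeated integration by parts in $t$ plays (passing from $S_{M+1}$ to $S_M$ to $\cdots$ to $S_1$, with boundary terms that vanish as $\dz\to0$ by the $L^2$-functional calculus). That integration by parts is the technical heart of the small-$t$ estimate and is simply missing from your argument.

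A second, smaller omission: the paper uses the binomial identity $f=\sum_{k=0}^M\binom{M}{k}[I-(I+L)^{-1}]^{M}L^{-k}f$ to push the pairing onto $\wz f\ev[I-(I+L)^{-1}]^Mf$, which is the quantity controlled by \eqref{3.6}; the H\"older step against the weight $1+[d(x,x_0)]^{n+\ez_1+2N(1/p_\Phi^--1)}$ is then applied to $\wz f$. In your sketch the hypothesis \eqref{3.6} is only invoked at the end in passing, and it is not explained how it enters either your step 1 (where you claim $\az_{\eta,R}\to\az$ in some $\mlx$, which only uses $f\in(\mlx)^*$ and not \eqref{3.6}) or your Fubini step. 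Without the binomial trick there is no clean route to exploit the quantitative hypothesis on $f$, and the transfer identity $\la f,(t^2L^*)^{M+1}e^{-2t^2L^*}\az\ra=\la\tml f,\tlx\az\ra$ needs this to give $\tml f$ a concrete meaning in a weighted $L^2$ space.
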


\begin{proof}
For $R>\dz>0$, write
\begin{eqnarray*}
&&C(M)\int_\dz^R\int_\cx\tml f(x) \ov{\tlx\az(x)}\dxt\\
&&\hs=\lf\la f,C(M)\int_\dz^R (t^2L^*)^{M+1}e^{-2t^2L^*}\az\dt\r\ra\\
&&\hs=\la f,\az\ra-\lf\la f,\az-C(M)\int_\dz^R (t^2L^*)^{M+1}e^{-2t^2L^*}\az\dt\r\ra.
\end{eqnarray*}
Since $\az$ is a $\pml$-molecule, by Definition \ref{d3.1},
there exists $b\in L^2(\cx)$ such that $\az=(L^*)^{\wz M}b$.
Notice that
\begin{eqnarray*}
f&&=\lf[I-(I+L)^{-1}+(I+L)^{-1}\r]^Mf\\
&&=\sum_{k=0}^M\binom M k\lf[I-(I+L)^{-1}\r]^{M-k}(I+L)^{-k}f
=\sum_{k=0}^M\binom M k\lf[I-(I+L)^{-1}\r]^{M}L^{-k}f,
\end{eqnarray*}
where $\binom M k$ denotes the \emph{binomial coefficient},
which, together with $H_\fz$-functional calculus, further implies that
\begin{eqnarray*}
&&\lf\la f,\az-C(M)\int_\dz^R (t^2L^*)^{M+1}e^{-2t^2L^*}\az\dt\r\ra\\
&&\hs=\sum_{k=0}^M\binom M k
\lf\la \lf[I-(I+L)^{-1}\r]^{M}f,
L^{\wz M-k}b-C(M)\int_\dz^R (t^2L^*)^{M+1}e^{-2t^2L^*}(L^*)^{\wz M-k}b\dt\r\ra\\
&&\hs=\sum_{k=0}^M\binom M k
\lf\la \lf[I-(I+L)^{-1}\r]^{M}f,
C(M)\int_0^\dz (t^2L^*)^{M+1}e^{-2t^2L^*}(L^*)^{\wz M-k}b\dt\r\ra\\
&&\hs\hs+\sum_{k=0}^M\binom M k
\lf\la \lf[I-(I+L)^{-1}\r]^{M}f,
C(M)\int_R^\fz (t^2L^*)^{M+1}e^{-2t^2L^*}(L^*)^{\wz M-k}b\dt\r\ra\\
&&\hs\ev
\sum_{k=0}^M\binom M k(\mh+\mj).
\end{eqnarray*}

For $\mj$, by \eqref{3.6} and H\"older's inequality, we conclude that
\begin{eqnarray*}
|\mj|&&\ls
\lf\{\int_\cx\frac{|(I-(I+L)^{-1})^Mf(x)|^2}{1+[d(x,x_0)]^{n+\ez_1+2N(1/p_\Phi^--1)}}
\,d\mu(x)\r\}^{1/2}\\
&&\hs\times
\lf\{\int_\cx\lf|
\int_R^\fz(t^2L^*)^{M+\wz M-k+1}e^{-2t^2L^*}b(x)\frac1{t^{2(\wz M-k)+1}}\,dt\r|^2
\r.\\
&&\hs\times\lf(1+[d(x,x_0)]^{n+\ez_1+2N(1/p_\Phi^--1)}\r)\,d\mu(x)\Bigg\}^{1/2}\\
&&\ls
\int_R^\fz\lf\|(t^2L^*)^{M+\wz M-k+1}e^{-2t^2L^*}b
\lf(1+[d(\cdot,x_0)]^{n+\ez_1+2N(1/p_\Phi^--1)}\r)^{1/2}\r\|_{L^2(\cx)}\\
&&\hs\times\frac 1{t^{2(\wz M-k)+1}}\,dt.
\end{eqnarray*}
Let $B_0\ev B(x_0, 1)$. Notice that there exist $\wz N,\,d\in\nn$
such that for all $j\in\nn$, $j\ge \wz N$,
$$U_j(B_0)\st\bigcup_{i=-d}^{d}U_{j+i}(B),$$
where $B$ is the ball adapted to $\az$ and $U_j(B)$ for $j\in\zz_+$
is as in \eqref{2.5}.
By choosing $j_0\ge\wz N$, we conclude that
\begin{eqnarray*}
|\mj|
&&\ls
\int_R^\fz\lf\|(t^2L^*)^{M+\wz M-k+1}e^{-2t^2L^*}b\r.\\
&&\hs\lf.\times
(1+[d(\cdot,x_0)]^{n+\ez_1+2N(1/p_\Phi^--1)})^{1/2}\r\|_{L^2(2^{j_0}B_0)}
\frac1{t^{2(\wz M-k)+1}}\,dt\\
&&\hs+\sum_{j=j_0+1}^\fz
\int_R^\fz\lf\|(t^2L^*)^{M+\wz M-k+1}e^{-2t^2L^*}b\r.\\
&&\hs\lf.\times
(1+[d(\cdot,x_0)]^{n+\ez_1+2N(1/p_\Phi^--1)})^{1/2}\r\|_{L^2(U_j(B_0))}
\frac1{t^{2(\wz M-k)+1}}\,dt\ev\mj_1+\mj_2.
\end{eqnarray*}
For all $\wz\ez>0$, let
$C_1\ev2^{\frac {j_0}2(n+\ez_1+2N(1/p_\Phi^--1))}\|b\|_{L^2(\cx)}$ and
$R_1\ev(\frac {C_1}{\wz\ez})^{\frac1{2(\wz M-k)}}$, then for all
$R>R_1$, we obtain
\begin{eqnarray*}
\mj_1
\ls2^{\frac {j_0}2(n+\ez_1+2N(1/p_\Phi^--1))}
\int_R^\fz\frac{dt}{t^{2(\wz M-k)+1}}\|b\|_{L^2(\cx)}\ls\wz\ez.
\end{eqnarray*}
Letting $C_2\ev r_B^{\mz +2\wz M}$ and
$R_1\ev(\frac {C_2}{\wz\ez})^{\frac1{2(\wz M-k)}}$, then for all
$R>R_1$, we know that
\begin{eqnarray*}
\mj_2&&
\ls\sum_{j=j_0+1}^\fz 2^{\frac j2(n+\ez_1+2N(1/p_\Phi^--1))}\\
&&\hs\times\sum_{i=-d}^d\Bigg\{
\int_R^\fz\lf\|(t^2L^*)^{M+\wz M-k+1}e^{-2t^2L^*}
(\chi_{\wz U_{j+i}(B)} b)
\r\|_{L^2(U_{j+i}(B))}
\frac1{t^{2(\wz M-k)+1}}\,dt\\
&&\hs+\int_R^\fz\lf\|(t^2L^*)^{M+\wz M-k+1}e^{-2t^2L^*}
(\chi_{(\wz U_{j+i}(B))^\com} b)
\r\|_{L^2(U_{j+i}(B))}
\frac1{t^{2(\wz M-k)+1}}\,dt\Bigg\},
\end{eqnarray*}
where $\wz U_{j+i}(B)\ev 2^{j+i+1}B\bh2^{j+i-1}B$. Then, since
\begin{eqnarray*}
&&\int_R^\fz\lf\|(t^2L^*)^{M+\wz M-k+1}e^{-2t^2L^*}
(\chi_{\wz U_{j+i}(B)} b)
\r\|_{L^2(U_{j+i}(B))}
\frac1{t^{2(\wz M-k)+1}}\,dt\\
&&\hs\ls\frac1{R^{2(\wz M-k)}}\|b\|_{L^2(\wz U_{j+i}(B))}
\ls2^{-\frac{j}{2}(n+\ez_1+2N(1/p_\Phi^--1))}\wz \ez,
\end{eqnarray*}
and $\int_R^\fz\lf\|(t^2L^*)^{M+\wz M-k+1}e^{-2t^2L^*}
(\chi_{(\wz U_{j+i}(B))^\com} b)
\r\|_{L^2(U_{j+i}(B))}
\frac1{t^{2(\wz M-k)+1}}\,dt$
satisfies the same estimate, we see that $\mj_2\ls\wz\ez$.
Thus, $\lim_{R\to\fz}\mj=0$.

To consider $\mh$, let $\wz f\ev[I-(I+L)^{-1}]^Mf$. Then
\begin{eqnarray*}
S_{M+1}&&\ev\lf\la \wz f,
\int_0^\dz (t^2L^*)^{M+1}e^{-2t^2L^*}(L^*)^{\wz M-k}b\dt\r\ra\\
&&=-\frac14\lf\la \wz f,
\int_0^\dz (t^2L^*)^{M}\frac{\pa}{\pa t}( e^{-2t^2L^*})(L^*)^{\wz M-k}b\dt\r\ra\\
&&=-\frac14\lf\la \wz f,
(\dz^2L^*)^{M} e^{-2\dz^2L^*}(L^*)^{\wz M-k}b\r\ra
+\frac M2\lf\la \wz f,
\int_0^\dz (t^2L^*)^{M} e^{-2t^2L^*}(L^*)^{\wz M-k}b\dt\r\ra.
\end{eqnarray*}
Thus,
\begin{eqnarray*}
S_{M+1}&&=
-\frac14\lf\la \wz f,(\dz^2L^*)^{M} e^{-2\dz^2L^*}(L^*)^{\wz M-k}b\r\ra
+\frac M2S_M\\
&&=\sum_{\ell=1}^M\frac{-M!}{2^{\ell+1}(M-\ell+1)!}
\lf\la \wz f,(\dz^2L^*)^{M-\ell+1} e^{-2\dz^2L^*}(L^*)^{\wz M-k}b\r\ra
+\frac{M!}{2^M}S_1.
\end{eqnarray*}
For all $\ell\in\{1,\cdots,M\}$, from H\"older's inequality, we infer that
\begin{eqnarray*}
&&\lf|\lf\la \wz f,(\dz^2L^*)^{M-\ell+1} e^{-2\dz^2L^*}(L^*)^{\wz M-k}b\r\ra\r|\\
&&\hs\ls
\lf\{\int_\cx\frac{|(I-(I+L)^{-1})^Mf(x)|^2}{1+[d(x,x_0)]^{n+\ez_1+2N(1/p_\Phi^--1)}}
\,d\mu(x)\r\}^{1/2}\\
&&\hs\hs\times
\lf\{\int_\cx\lf|
(\dz^2L^*)^{M-\ell+1}e^{-2\dz^2L^*}(L^*)^{\wz M-k}b(x)\r|^2
\lf(1+[d(x,x_0)]^{n+\ez_1+2N(1/p_\Phi^--1)}\r)\,d\mu(x)\r\}^{1/2}\\
&&\hs\ls
2^{\frac{j_0}2[n+\ez_1+2N(1/p_\Phi^--1)]}
\lf\|(\dz^2L^*)^{M-\ell+1}e^{-2\dz^2L^*}(L^*)^{\wz M-k}b\r\|_{L^2(2^{j_0}B_0)}\\
&&\hs\hs+\sum_{j=j_0+1}^\fz
2^{\frac{j}2[n+\ez_1+2N(1/p_\Phi^--1)]}\\
&&\hs\hs\times\Bigg\{
\lf\|(\dz^2L^*)^{M-\ell+1}e^{-2\dz^2L^*}
\lf(\chi_{\bigcup_{i=j-d-1}^{j+d+1}U_i(B)}
(L^*)^{\wz M-k}b\r)\r\|_{L^2(U_j(B_0))}\\
&&\hs\hs+\lf\|(\dz^2L^*)^{M-\ell+1}e^{-2\dz^2L^*}
\lf(\chi_{(\bigcup_{i=j-d-1}^{j+d+1}U_i(B))^\com}
(L^*)^{\wz M-k}b\r)\r\|_{L^2(U_j(B_0))}\Bigg\}.
\end{eqnarray*}
By the $L^2$-functional calculus, we see that
$\lim_{\dz\to0}(\dz^2L^*)^{M-\ell+1}e^{-2\dz^2L^*}(L^*)^{\wz M-k}b=0$ in
$L^2(\cx)$ and, by Lemma \ref{l2.2}, we know that
\begin{eqnarray*}
&&\sum_{j=j_0+1}^\fz
2^{\frac{j}2[n+\ez_1+2N(1/p_\Phi^--1)]}\Bigg\{
\lf\|(\dz^2L^*)^{M-\ell+1}e^{-2\dz^2L^*}
\lf(\chi_{\bigcup_{i=j-d-1}^{j+d+1}U_i(B)}(L^*)^{\wz M-k}b\r)\r\|_{L^2(U_j(B_0))}\\
&&\hs\hs+\lf\|(\dz^2L^*)^{M-\ell+1}e^{-2\dz^2L^*}
\lf(\chi_{(\bigcup_{i=j-d-1}^{j+d+1}
U_i(B))^\com}(L^*)^{\wz M-k}b\r)\r\|_{L^2(U_j(B_0))}\Bigg\}\\
&&\hs\ls\sum_{j=j_0+1}^\fz
2^{\frac{j}2[n+\ez_1+2N(1/p_\Phi^--1)]}
\lf[\|(L^*)^{\wz M-k}b\|_{L^2(\bigcup_{i=j-d-1}^{j+d+1}U_i(B))}
+e^{-\frac{2^jr_B}{\dz}}\|(L^*)^{\wz M-k}b\|_{L^2(\cx)}\r]\\
&&\hs\ls\wz\ez.
\end{eqnarray*}
From
\begin{eqnarray*}
S_1=\lf\la \wz f,
\int_0^\dz (t^2L^*)e^{-2t^2L^*}(L^*)^{\wz M-k}b\dt\r\ra
=\lf\la \wz f, \lf(e^{-2\dz^2L^*}-I\r)(L^*)^{\wz M-k}b\r\ra,
\end{eqnarray*}
and
$$\lim_{\dz\to0}\lf\|\lf(e^{-2\dz^2L^*}-I\r)(L^*)^{\wz M-k}b\r\|_{L^2(\cx)}=0,$$
it follows that $\lim_{\dz\to0}H=0$, which completes the proof of
Proposition \ref{p3.3}.
\end{proof}

Instead of \cite[Proposition 4.6]{jy} by Proposition \ref{3.3} here,
repeating the proof of \cite[Corollary 4.3]{jy}, we obtain the following
Lemma \ref{3.2}. The details are omitted.

\begin{lemma}\label{l3.2}
Let $L$, $\Phi$, $\rho$ and $M$ be as in Definition \ref{d3.2} and $\ez\in(0,\fz)$.
If $f\in\bmo$, then for any $\pmx$-molecule $\az$,
there holds
$$\la f, \az\ra=C(M)\iint_\xt\tml f(x) \ov{\tlx\az(x)}\dxt.$$
\end{lemma}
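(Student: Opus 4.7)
The plan is to reduce Lemma \ref{l3.2} to an application of Proposition \ref{p3.3}. Since every $f\in\bmo$ already belongs to $\cm^M_{\Phi,L}(\cx)$ by Definition \ref{d3.2}, the only nontrivial hypothesis of Proposition \ref{p3.3} to verify is the weighted integrability condition \eqref{3.6}. By Remark \ref{r3.2}(ii), one may reinterpret $f\in\bmo$ with any BMO order $M_0\in(\mz,\fz)$. I would choose $M_0$ slightly above $\mz$ so that $M>M_0+\ez_1+\frac n4+\frac N2(\frac 1{p_\Phi^-}-1)$ for some small $\ez_1\in(0,\fz)$. Then the given $\pmx$-molecule $\az$, being a molecule of order $M$ in the sense of Definition \ref{d3.1}, fulfills the $\pml$-molecule requirement of Proposition \ref{p3.3} with $\wz M=M$.

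The verification of \eqref{3.6} at order $M_0$ is the main analytic step. I would decompose $\cx=B(x_0,1)\cup\bigcup_{j=1}^\fz U_j(B(x_0,1))$. On $B(x_0,1)$, Proposition \ref{p3.1} applied with $B=B(x_0,1)$ bounds $\|(I-(I+L)^{-1})^{M_0}f\|_{L^2(B(x_0,1))}$ directly by $\rho(\mu(B(x_0,1)))[\mu(B(x_0,1))]^{1/2}\|f\|_\bmo$. On each annulus $U_j(B(x_0,1))$ with $j\ge 1$, Proposition \ref{p3.1} applied to the ball $B_j\ev B(x_0,2^j)$ controls $\|(I-(I+4^jL)^{-1})^{M_0}f\|_{L^2(B_j)}$ by $\rho(\mu(B_j))[\mu(B_j)]^{1/2}\|f\|_\bmo$. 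A functional-calculus identity relating $(I-(I+4^jL)^{-1})^{M_0}$ to $(I-(I+L)^{-1})^{M_0}$, combined with the off-diagonal estimates of Lemma \ref{l2.2} for the resolvent family $\{(t^2L)^k(I+t^2L)^{-M_0}\}_{t>0}$, would transfer this bound from the dilated scale to the unit scale at the cost of an extra geometric factor in $j$. Multiplying by the weight $[1+d(x,x_0)^{n+\ez_1+2N(1/p_\Phi^--1)}]^{-1}\sim 2^{-j[n+\ez_1+2N(1/p_\Phi^--1)]}$ on $U_j(B(x_0,1))$ and using \eqref{2.3} together with the upper type $1/p_\Phi^--1$ of $\rho$ to bound $[\rho(\mu(B_j))]^2\mu(B_j)\ls 2^{jn[2/p_\Phi^--1]}$, the resulting series in $j$ is geometric and converges precisely because $\ez_1>0$.

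Once \eqref{3.6} is established, Proposition \ref{p3.3} yields the Calder\'on reproducing formula at exponent $M_0$. To match the exponent $M$ demanded by Lemma \ref{l3.2}, I would appeal to the $L^2$-functional-calculus identity $C(K)\int_0^\fz(t^2L)^{K+1}e^{-2t^2L}\dt=I$ for both $K=M_0$ and $K=M$, the semigroup property $e^{-2t^2L}=e^{-t^2L}e^{-t^2L}$, the adjoint pairing, and the absolute convergence of the $M$-integral provided by Proposition \ref{p3.2} and the molecular decay of $\az$, to conclude that the $M_0$- and $M$-formulas represent the same value $\la f,\az\ra$. The principal technical obstacle is the off-diagonal comparison of the two resolvents at scales $1$ and $2^j$ in the previous paragraph: it must produce enough decay in $j$ to dominate the BMO-growth $[\rho(\mu(B_j))]^2\mu(B_j)$; this is precisely the functional-calculus estimate parallel to the one carried out in the proof of \cite[Corollary 4.3]{jy}, with Proposition \ref{p3.3} playing the role of \cite[Proposition 4.6]{jy}.
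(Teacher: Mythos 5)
Your plan splits into two steps: (a) verify the integrability condition \eqref{3.6} for $f\in\bmo$, and (b) invoke Proposition \ref{p3.3}. Step (a) is sound in spirit (a direct cover of $\cx$ by balls of radius $1$ together with Proposition \ref{p3.1} and the growth estimate \eqref{2.4} gives the convergent series in $j$ thanks to $\ez_1>0$). The difficulty is in step (b), where the exponent bookkeeping does not close.

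Proposition \ref{p3.3} requires the molecule's order $\wz M$ to exceed the BMO/(3.6)-order (call it $M_0$) by at least $\ez_1+\frac n4+\frac N2(\frac1{p_\Phi^-}-1)$. In Lemma \ref{l3.2} the molecule $\az$ is a $\pmx$-molecule of order exactly $M$, so you must take $\wz M=M$; but a $(\Phi,M,\ez)_{L^*}$-molecule is \emph{not} automatically a $(\Phi,\wz M,\ez)_{L^*}$-molecule for $\wz M>M$ (the inclusion goes only downward in $M$). You therefore propose to lower $M_0$ instead, and you need $M>M_0+\ez_1+\frac n4+\frac N2(\frac1{p_\Phi^-}-1)$ with $M_0>\mz$. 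Since the right-hand side already exceeds $\mz+\frac n4$, your argument only applies when $M>\mz+\frac n4+\frac N2(\frac1{p_\Phi^-}-1)$, whereas Lemma \ref{l3.2} claims the result for every $M>\mz$. For $M$ in the gap $(\mz,\,\mz+\frac n4+\frac N2(\frac1{p_\Phi^-}-1)]$ no admissible choice of $M_0$ exists, so the proposal does not prove the stated lemma.

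There is a further soft spot in the "matching the exponent" step. Proposition \ref{p3.3} at order $M_0$ produces the identity with $C(M_0)(t^2L)^{M_0}e^{-t^2L}f$, while Lemma \ref{l3.2} asserts the one with $C(M)(t^2L)^{M}e^{-t^2L}f$. You appeal to the $L^2$-functional-calculus reproducing identity to claim the two integrals agree; that identity is valid for $f\in L^2(\cx)$, but here $f$ is only a BMO element understood as a distribution in $\cm_{\Phi,L}^M(\cx)$, and establishing that the $M$-integral equals $\la f,\az\ra$ is precisely what Lemma \ref{l3.2} is supposed to prove, so invoking it here is circular unless an additional approximation argument (e.g.\ along the lines of Lemma \ref{l4.8}, using $\tmy f\in T^\fz_\Phi(\cx)$ and the adjoint pairing, with an explicit limiting justification) is carried out. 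The paper sidesteps both issues by "repeating the proof of \cite[Corollary 4.3]{jy}" rather than feeding the molecule straight into Proposition \ref{p3.3}; without addressing the $M_0$ constraint and the exponent-matching step rigorously, the proposal does not establish the full range of $M$ claimed.
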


Recall that a measure $d\mu$ on $\xt$ is called a \emph{$\rho$-Carleson
measure} if
\begin{equation*}
\|d\mu\|_\rho\ev
\sup_{B\st \cx}\lf\{\frac1{\mu(B)[\rho(\mu(B))]^2}
\iint_{\wh B}|d\mu|\r\}^{1/2}<\fz,
\end{equation*}
where the supremum is taken over all balls $B$ of $\cx$.

Using Theorem \ref{t3.2} and Proposition \ref{p3.2}, similar to
the proof of \cite[Theorem 4.2]{jy}, we obtain
the following $\rho$-Carleson measure characterization of $\bmo$.

\begin{theorem}\label{t3.3}
Let $L$, $\Phi$, $\rho$ and $M$ be as in Definition \ref{d3.2}.
Fix $x_0\in\cx$. Then the following are equivalent:

{\rm (i)} $f\in\bmo$;

{\rm (ii)} $f\in\cm_{\Phi,L}^{M}(\cx)$ satisfies that
$$\int_\cx
\frac{|(I-(I+L)^{-1})^Mf(x)|^2}{1+[d(x,x_0)]^{n+\ez_1+2N(1/p_\Phi^--1)}}
\,d\mu(x)<\fz$$
for some $\ez_1\in(0,\fz)$, and $d\mu_f$ is a $\rho$-Carleson measure,
where $d\mu_f$ is defined by $d\mu_f(x,t)\ev|(t^2L)^{M}e^{-t^2L}f(x)|^2\dxt$
for all $(x,t)\in\xt$.

Moreover, $\|d\mu_f\|_\rho$ is equivalent to $\|f\|_\bmo$.
\end{theorem}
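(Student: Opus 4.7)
The plan is to mimic the structure of the proof of \cite[Theorem 4.2]{jy}, with the reproducing formula in Proposition \ref{p3.3} above playing the role of \cite[Proposition 4.6]{jy}.

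\textbf{(i)$\Rightarrow$(ii).} Since $f\in\bmo$ belongs by definition to $\cm_{\Phi,L}^M(\cx)$, and Proposition \ref{p3.2} gives directly that $d\mu_f$ is a $\rho$-Carleson measure with $\|d\mu_f\|_\rho\le C\|f\|_\bmo$, only the decay condition \eqref{3.6} requires work. I would use Proposition \ref{p3.1} to pass from $(I-e^{-r_B^2L})^M$ to $(I-(I+L)^{-1})^M$ at unit scale, then split $\cx=B_0\cup\bigcup_{k\ge1}U_k(B_0)$ with $B_0\ev B(x_0,1)$. On each $U_k(B_0)$, Lemma \ref{l2.1} with $\ell=2^k$ gives a finite covering by unit balls $\{B_{k,i}\}_{i=1}^{N_k}$, and Proposition \ref{p3.1} on each $B_{k,i}$ yields $\|(I-(I+L)^{-1})^Mf\|_{L^2(B_{k,i})}\lesssim [\mu(B_{k,i})]^{1/2}\rho(\mu(B_{k,i}))\|f\|_\bmo$. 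Summing over $i$ and then over $k$, while using \eqref{2.3}, \eqref{2.4} and the upper type $1/p_\Phi^--1$ of $\rho$, gives an estimate of the form $\sum_k 2^{k\theta}/2^{k(n+\ez_1+2N(1/p_\Phi^--1))}$ with $\theta<n+2N(1/p_\Phi^--1)$, which converges and produces \eqref{3.6}.

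\textbf{(ii)$\Rightarrow$(i).} Fix $\wz M>M$ and $\ez\in(0,\wz M-\mz)$. By the converse part of Theorem \ref{t3.2} together with the density of $H_{\Phi,\mathrm{fin},L^*}^{\mathrm{mol},\ez,2\wz M}(\cx)$ in $\hxx$, it suffices to show that the functional $\ell(\az)\ev\la f,\az\ra$ defined on $(\Phi,2\wz M,\ez)_{L^*}$-molecules extends boundedly to $\hxx$ with norm controlled by $\|d\mu_f\|_\rho$. For such a molecule $\az$, the hypothesis \eqref{3.6} in (ii) allows us to apply Proposition \ref{p3.3}, giving
$$\la f,\az\ra=C(M)\iint_\xt (t^2L)^Me^{-t^2L}f(x)\,\overline{t^2L^*e^{-t^2L^*}\az(x)}\dxt.$$
Setting $G(x,t)\ev t^2L^*e^{-t^2L^*}\az(x)$, we have $\ca(G)=\cs_{L^*}\az\in\lx$ and $\|G\|_\tx\sim\|\az\|_\hxx$. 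Applying Lemma \ref{l3.1} to $G=\sum_j\mu_j a_j$ with $\tx$-atoms $a_j$ supported in $\wh{B_j}$, and using Cauchy--Schwarz on each term together with the $\rho$-Carleson property of $d\mu_f$, I get
$$\lf|\iint (t^2L)^Me^{-t^2L}f\cdot a_j\frac{d\mu(x)\,dt}t\r|\le\lf(\iint_{\wh{B_j}}|(t^2L)^Me^{-t^2L}f|^2\dxt\r)^{1/2}\cdot\frac1{[\mu(B_j)]^{1/2}\rho(\mu(B_j))}\le\|d\mu_f\|_\rho,$$
so that $|\la f,\az\ra|\lesssim\|d\mu_f\|_\rho\sum_j|\mu_j|$. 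The desired bound $|\la f,\az\ra|\lesssim\|d\mu_f\|_\rho\|\az\|_\hxx$ then follows from the standard passage via \eqref{3.2} that converts the atomic sum $\sum_j|\mu_j|$ into the $\tx$-quasi-norm of $G$ (the argument is the Orlicz analogue of the Lebesgue pairing and exploits the lower type $p_\Phi^-$ of $\Phi$).

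\textbf{Main obstacle.} The routine Lebesgue-type pairing between Carleson measures and tent spaces is nonlinear in the Orlicz setting: the estimate $\sum_j|\mu_j|\lesssim\|G\|_\tx\cdot(\text{Carleson norm})$ does not follow from \eqref{3.2} alone, but requires splitting the indices $j$ according to the relative size of $|\mu_j|/[\mu(B_j)\rho(\mu(B_j))]$ and summing using both the lower type of $\Phi$ and the upper type of $\rho$, as is done in the proof of \cite[Theorem 4.2]{jy}. The delicate point in transferring that argument here is that $\az$ is only a molecule, not an atom, so one must control the contribution from all the annuli $U_j(B)$ in Definition \ref{d3.1}(ii); the Davies--Gaffney bound from Lemma \ref{l2.2} supplies the needed decay, and Lemma \ref{l2.3} together with Proposition \ref{p3.3} ensures all integrals make sense. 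I expect this to be the only step requiring careful bookkeeping.
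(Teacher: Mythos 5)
Your proof follows essentially the same route as the paper's. For (i)$\Rightarrow$(ii) the paper simply invokes Proposition \ref{p3.1} together with the proof of Lemma \ref{l3.2} (which is exactly the verification of \eqref{3.6} for $f\in\bmo$), plus Proposition \ref{p3.2} for the Carleson bound; your elaboration of the decay estimate via Lemma \ref{l2.1} is plausible, provided you use the bounded-overlap property, Lemma \ref{l2.1}(iii), so that $\sum_i\mu(B_{k,i})\ls\mu(2^{k+1}B_0)\ls2^{kn}\mu(B_0)$ --- if you instead bound by $N_k\cdot\max_i\mu(B_{k,i})$, an extra $2^{kN}$ appears and the series need not converge. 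For (ii)$\Rightarrow$(i) your chain (Proposition \ref{p3.3}, Lemma \ref{l3.1}, Cauchy--Schwarz plus the $\rho$-Carleson property, then Theorem \ref{t3.2}) is exactly the paper's.

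Where you go astray is the paragraph labelled ``Main obstacle''. The estimate $\sum_j|\mu_j|\ls\|G\|_\tx$ is \emph{not} the delicate point and requires no index-splitting. From \eqref{3.2} and the standing assumption that $\Phi$ is concave with $\Phi(0)=0$ (equivalently, of strict upper type $1$ --- note it is the \emph{upper}-type side that does the work here, not the lower type you cite), one gets directly that $\sum_j|\mu_j|\le\blz(\{\mu_ja_j\}_j)\ls\|G\|_\tx$: since $\Phi\bigl(1/[\mu(B_j)\rho(\mu(B_j))]\bigr)=1/\mu(B_j)$ by \eqref{2.11}, for any $\lz>\blz(\{\mu_ja_j\}_j)$ concavity gives
$$1\ge\sum_j\mu(B_j)\,\Phi\!\left(\frac{|\mu_j|}{\lz\,\mu(B_j)\rho(\mu(B_j))}\right)\ge\sum_j\frac{|\mu_j|}{\lz},$$
so $\sum_j|\mu_j|\le\lz$. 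This is precisely the content of the paper's Lemma \ref{l4.1}, and it is the clean step, not the hard one. Similarly, the worry that ``$\az$ is only a molecule, not an atom, so one must control all the annuli $U_j(B)$'' is a red herring for (ii)$\Rightarrow$(i): once Proposition \ref{p3.3} is granted, $G=t^2L^*e^{-t^2L^*}\az$ is decomposed by Lemma \ref{l3.1} into genuine $\tx$-\emph{atoms} with compact supports $\wh B_j$, and the molecular annuli of $\az$ play no further role in this direction (they matter only in the proof of Proposition \ref{p3.3} itself, which you take as given). Finally, a small parameter point: to invoke Proposition \ref{p3.3} you must take $\wz M>M+\ez_1+\frac n4+\frac N2(\frac1{p_\Phi^-}-1)$, as the paper does, rather than merely $\wz M>M$; the weaker threshold suffices for Theorem \ref{t3.2} but not for the reproducing formula.
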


\begin{proof}
It follows from Proposition \ref{p3.1} and the proof of Lemma \ref{l3.2}
that (i) implies (ii).

To show that (ii) implies (i),
let $\wz M>M+\ez_1+\frac n4+\frac N2(\frac1{p_\Phi^-}-1)$.
From Proposition \ref{p3.3}, we deduce that
$$\la f,g\ra=C(M)\iint_\xt\tml f(x) \ov{\tlx g(x)}\dxt,$$
where $g$ is any finite combination of $\pmx$-molecules.
Then $t^2L^*e^{-t^2L^*}g\in\tx$. By Lemma \ref{l3.1}, there exist
$\{\lz_j\}_{j=1}^\fz\st\cc$ and $\tx$-atoms $\{a_j\}_{j=1}^\fz$
supported in $\{\wh B_j\}_{j=1}^\fz$ such that \eqref{3.1} and
\eqref{3.2} hold. This, together with Fatou's lemma and H\"older's
inequality, implies that
\begin{eqnarray*}
|\la f,g\ra|&&=\lf|C(M)\iint_\xt\tml f(x) \ov{\tlx g(x)}\dxt\r|\\
&&\ls\sum_j|\lz_j|\int_0^\fz\int_\cx|\tml f(x)\ov{a_j(x,t)}|\dxt\\
&&\ls\sum_j|\lz_j|\|a_j\|_{T_2^2(\cx)}
\lf(\iint_{\wh B_j}|\tml f(x)|^2\dxt\r)^{1/2}\\
&&\ls\sum_j|\lz_j|\|d\mu_f\|_\rho\ls\|\tmlx g\|_\tx\|d\mu_f\|_\rho
\sim\|g\|_\hxx\|d\mu_f\|_\rho.
\end{eqnarray*}
By this and Theorem \ref{t3.2}, we conclude that $f\in(\hxx)^*=\bmo$,
which completes the proof of Theorem \ref{t3.3}.
\end{proof}

Now we introduce the space $\vmo$.

\begin{definition}\rm\label{d3.3}
Let $L$, $\Phi$, $\rho$ and $M$ be as in Definition \ref{d3.2}. An
element $f\in\bmo$ is said to be in the \emph{space} $\vmol$ if it satisfies the
following limiting conditions $\gz_1(f) =\gz_2(f)=\gz_3(f)=0$, where
$x_0\in\cx$ is a fixed point, $c\in(0,\fz)$,
$$\gz_1(f)\equiv
\lim_{c\to0}\sup_{B:\,r_B\le c}\frac1{\rho(\mu(B))}\lf[\frac1{\mu(B)}
\int_B|(I-e^{-r_B^2L})^Mf(x)|^2\,d\mu(x)\r]^{1/2},$$
$$\gz_2(f)\equiv
\lim_{c\to\fz}\sup_{B:\,r_B\ge c}\frac1{\rho(\mu(B))}\lf[\frac1{\mu(B)}
\int_B|(I-e^{-r_B^2L})^Mf(x)|^2\,d\mu(x)\r]^{1/2},$$
and
$$\gz_3(f)\equiv\lim_{c\to\fz}\sup_{B:\,B\subset[B(x_0,c)]^
\complement}\frac1{\rho(\mu(B))}\lf[\frac1{\mu(B)}
\int_B|(I-e^{-r_B^2L})^Mf(x)|^2\,d\mu(x)\r]^{1/2}.$$
For any $f\in\vmol$, define $\|f\|_{\vmol}\equiv\|f\|_{\bmo}$.
\end{definition}

\begin{definition}\rm\label{d3.4}
Let $\Phi$ satisfy Assumption $(\Phi)$ and
$\rho$ be as in \eqref{2.11}. The \emph{space}
$\txv$ is defined to be the space of all $f\in\txz$
satisfying $\eta_1(f)=\eta_2(f)=\eta_3(f)=0$ with the same norm as
the space $\txz$, where $x_0\in\cx$ is a fixed point, $c\in(0,\fz)$,
$$\eta_1(f)\equiv
\lim_{c\to0}\sup_{B:\,r_B\le c}\frac1{\rho(\mu(B))}\lf[\frac1{\mu(B)}
\iint_{\wh B}|f(y,t)|^2\,\frac{d\mu(y)\,dt}t\r]^{1/2},$$
$$\eta_2(f)\equiv
\lim_{c\to\fz}\sup_{B:\,r_B\ge c}\frac1{\rho(\mu(B))}\lf[\frac1{\mu(B)}
\iint_{\wh B}|f(y,t)|^2\,\frac{d\mu(y)\,dt}t\r]^{1/2},$$
and
$$\eta_3(f)\equiv\lim_{c\to\fz}\sup_{B:\,B\subset[B(x_0,c)]^
\complement}\frac1{\rho(\mu(B))}\lf[\frac1{\mu(B)}
\iint_{\wh B}|f(y,t)|^2\,\frac{d\mu(y)\,dt}t\r]^{1/2}.$$
\end{definition}
It is easy to see that $\txv$ is a \emph{closed linear subspace} of $\txz$.

Further, denote by $\txy$ the {\it space of all $f\in\txz$ with
$\eta_1(f)=0$}, and $\txb$ the {\it space of all $f\in T_2^2(\cx)$
with bounded support}. Obviously, we have $\txb\st\txv\st\txy$.
Finally, denote by $\txl$ the {\it closure of $\txb$ in the space
$\txy$}.

\begin{lemma}\label{l3.3}
Let $L$ and $\Phi$ be as in Definition \ref{d3.1}, and $\txv$ and
$\txl$ defined as above. Then $\txv$ and $\txl$ coincide with
equivalent norms.
\end{lemma}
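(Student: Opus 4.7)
My plan is to verify both inclusions, exploiting that for each $i\in\{1,2,3\}$ the functional $\eta_i$ is sublinear with $\eta_i(\cdot)\le\|\cdot\|_\txz$, so that $\txv$ is closed in $\txz$.

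For the inclusion $\txl\subseteq\txv$, this closedness reduces the task to showing $\txb\subseteq\txv$. Given $f\in\txb$, one has $\supp f\subseteq K\times[a,b]$ for some bounded $K\subseteq\cx$ and $0<a<b<\fz$; then $\eta_1(f)=0$ because $\wh B\cap\supp f=\emptyset$ whenever $r_B<a$; $\eta_3(f)=0$ because $B\cap K=\emptyset$ once $B\subseteq[B(x_0,c)]^\com$ with $c$ sufficiently large; and $\eta_2(f)=0$ because $\iint_{\wh B}|f|^2\dyt\le\|f\|_{T_2^2(\cx)}^2$, while the normalising factor $\rho(\mu(B))[\mu(B)]^{1/2}$ tends to $\fz$ as $r_B\to\fz$ for balls intersecting $K$, thanks to the upper type $1/p_\Phi^--1$ of $\rho$ together with the doubling of $\mu$.

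For the inclusion $\txv\subseteq\txl$, given $f\in\txv$, my plan is to approximate by $f_k\ev f\chi_{E_k}$ with $E_k\ev B(x_0,k)\times[1/k,k]$. From $E_k\subseteq\wh{B(x_0,2k)}$ and $f\in\txz$ we obtain $\iint_{E_k}|f|^2\dyt<\fz$, whence the standard equivalence $\|g\|_{T_2^2(\cx)}^2\sim\iint|g|^2\dyt$ (by doubling and Fubini) yields $f_k\in T_2^2(\cx)$; together with its bounded support this gives $f_k\in\txb$. Writing $f-f_k=f\chi_{E_k^\com}$ and decomposing $E_k^\com=\{t>k\}\cup\{y\notin B(x_0,k)\}\cup\{t<1/k\}$, the triangle inequality reduces the proof to the vanishing of three partial $\txz$-norms as $k\to\fz$. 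The $\{t>k\}$-piece is supported on tents over balls with $r_B>k$ (since $\wh B\subseteq B\times[0,r_B]$) and is controlled by $\eta_2(f)$. For the $\{y\notin B(x_0,k)\}$-piece, I would split balls into those with $B\subseteq[B(x_0,k/2)]^\com$, handled by $\eta_3(f)$, and the rest, for which $B\cap B(x_0,k/2)\ne\emptyset$ combined with the existence of $y\in B$ having $d(y,x_0)\ge k$ forces $r_B\ge k/4$ by the triangle inequality, so that $\eta_2(f)$ applies.

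The hard part will be the $\{t<1/k\}$-piece for balls with $r_B>1/k$, where the small-radius control from $\eta_1(f)=0$ must be transferred onto the tent over a potentially large ball. Here I plan to cover $B$ via Lemma \ref{l2.1} by balls $B_i\ev B(x_i,1/k)$ with bounded overlap, and to observe that any $(y,t)\in\wh B$ with $t<1/k$ and $y\in B_i$ satisfies $B(y,t)\subseteq 2B_i$, so that $\wh B\cap\{t<1/k\}\subseteq\bigcup_i\wh{2B_i}$. With $\ez_k\ev\sup_{r_B\le 2/k}\frac{1}{\rho(\mu(B))}[\frac{1}{\mu(B)}\iint_{\wh B}|f|^2\dyt]^{1/2}$, which tends to $0$ as $k\to\fz$ by $\eta_1(f)=0$, one has $\iint_{\wh{2B_i}}|f|^2\dyt\le\ez_k^2\mu(2B_i)[\rho(\mu(2B_i))]^2$. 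Summing in $i$ and invoking doubling, the quasi-monotonicity of $\rho$ (from its nonnegative lower type $1/p_\Phi^+-1$, itself a consequence of the concavity of $\Phi$), and the bounded overlap from Lemma \ref{l2.1}, I expect a bound proportional to $\ez_k^2\mu(B)[\rho(\mu(B))]^2$; dividing produces a uniform-in-$B$ estimate of order $\ez_k$, completing the proof.
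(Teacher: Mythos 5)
Your argument is correct and mirrors the paper's proof: both directions reduce $\txl\subseteq\txv$ to $\txb\subseteq\txv$ together with closedness of $\txv$ in $\txz$, and for the converse both truncate $f$ to a bounded region of $\xt$ and control the $\txz$-norm of the error in three pieces governed by $\eta_1,\eta_2,\eta_3$, with the crucial small-$t$ piece handled exactly as you propose — covering $B$ by balls of the threshold radius via Lemma~\ref{l2.1} and using doubling together with the (quasi-)monotonicity of $\rho$. The differences are purely organizational (the paper fixes $\ez>0$ and cases on the ball $B$, whereas you send $k\to\fz$ and partition $E_k^\com$ into three regions), and your explicit treatment of the $\{y\notin B(x_0,k)\}$ piece — splitting according to whether $B$ meets $B(x_0,k/2)$ and forcing $r_B\ge k/4$ otherwise — is in fact a touch more scrupulous than the corresponding Case~(iii) of the paper's proof.
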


\begin{proof}
Since $\txb\st\txv$ and $\txv$ is a closed linear subspace
of $\txz$, we conclude that $\txl= {\txb}\st\txv$.

Conversely, for any $f\in\txv$, by the definition of $\txv$, for any $\ez>0$,
there exist positive constants $a_0$, $b_0$ and $c_0$ such that
\begin{equation}\label{3.7}
\sup_{B:\,r_B\le a_0}\frac1{\mu(B)[\rho(\mu(B))]^2}
\iint_{\wh B}|f(y,t)|^2\,\frac{d\mu(y)\,dt}t<\ez,
\end{equation}
\begin{equation}\label{3.8}
\sup_{B:\,r_B\ge b_0}\frac1{\mu(B)[\rho(\mu(B))]^2}
\iint_{\wh B}|f(y,t)|^2\,\frac{d\mu(y)\,dt}t<\ez,
\end{equation}
and
\begin{equation}\label{3.9}
\sup_{B:\,B\subset[B(x_0,c_0)]^
\complement}\frac1{\mu(B)[\rho(\mu(B))]^2}
\iint_{\wh B}|f(y,t)|^2\,\frac{d\mu(y)\,dt}t<\ez.
\end{equation}

Let $K_0\ev\max\{a_0^{-1},b_0,c_0\}$ and, for all $(y,t)\in\xt$,
\begin{equation*}
g(y,t)\ev f(y,t)\chi_{B(x_0,2K_0)\times((2K_0)^{-1},2K_0)}(y,t).
\end{equation*}
Obviously, $g\in\txb$. To complete the proof of Lemma \ref{l3.3},
we need show that
$$\|f-g\|_\txz^2\ls \ez.$$
We consider the following three cases
for all balls $B$ in \eqref{3.7}, \eqref{3.8} and \eqref{3.9}.

Case (i) $r_B<a_0$ or $r_B>b_0$. In this case,
from \eqref{3.7} and \eqref{3.8}, we deduce that
\begin{equation*}
\|f-g\|_\txz^2\le\frac2{\mu(B)[\rho(\mu(B))]^2}
\iint_{\wh B}|f(y,t)|^2\,\frac{d\mu(y)\,dt}t\le2\ez.
\end{equation*}

Case (ii) $a_0\le r_B\le b_0$ and $B\st [B(x_0,c_0)]^\com$.
In this case, by \eqref{3.9}, we conclude that
\begin{equation*}
\|f-g\|_\txz^2\le\frac2{\mu(B)[\rho(\mu(B))]^2}
\iint_{\wh B}|f(y,t)|^2\,\frac{d\mu(y)\,dt}t\le2\ez.
\end{equation*}

Case (iii) $a_0\le r_B\le b_0$ and $B\cap B(x_0,c_0)\neq\emptyset$.
In this case, we have
\begin{eqnarray*}
\iint_{\wh B}|f(y,t)-g(y,t)|^2\,\frac{d\mu(y)\,dt}t
&&\le
\int_0^{(2K_0)^{-1}}\int_B|f(y,t)|^2\dyt\\
&&\le
\int_0^{(2K_0)^{-1}}\int_{B(x_B,2^ka_0)}|f(y,t)|^2\dyt,
\end{eqnarray*}
where $x_B$ is the \emph{center} of $B$ and $k$
the \emph{smallest integer} such that $2^ka_0>r_B$.
Then, by Lemma \ref{l2.1}, we pick a family of balls with
the same radius $a_0$, $\{B(x_{B,i},a_0)\}_{i=1}^{N_k}$, such that
$B(x_B,2^ka_0)\st\cup_{i=1}^{N_k}B(x_{B,i},a_0)$, ${N_k}\ls 2^{kn}$ and
$\sum_{i=1}^{N_k}\chi_{B(x_{B,i},a_0)}\ls1$. Therefore, combining the fact that
$\rho$ is an increasing function, we obtain
\begin{eqnarray*}
\iint_{\wh B}|f(y,t)-g(y,t)|^2\,\frac{d\mu(y)\,dt}t
&&\le
\int_0^{(2K_0)^{-1}}\int_{\cup_{i=1}^{N_k}B(x_{B,i},a_0)}|f(y,t)|^2\dyt\\
&&\le
\sum_{i=1}^{N_k}\iint_{\wh B(x_{B,i},a_0)}|f(y,t)|^2\dyt\\
&&\ls
\ez\sum_{i=1}^{N_k}\mu(B(x_{B,i},a_0))[\rho(\mu(B(x_{B,i},a_0)))]^2\\
&&\ls
\ez[\rho(\mu(B))]^2\sum_{i=1}^{N_k}\mu(B(x_{B,i},a_0))
\ls\ez\mu(B)[\rho(\mu(B))]^2,
\end{eqnarray*}
which completes the proof of Lemma \ref{l3.3}.
\end{proof}

\begin{definition}\rm\label{d3.5}
Let $L$, $\Phi$, $\rho$ and $M$ be as in Definition \ref{d3.2}.
The \emph{space} $\tvmol$ is defined to be the space of all elements
$f\in\bmol$ that satisfy the following limiting conditions
$\wz\gz_1(f)=\wz\gz_2(f)=\wz\gz_3(f)=0$, where $c\in(0,\fz)$,
$$\wz\gz_1(f)\equiv
\lim_{c\to0}\sup_{B:\,r_B\le c}\frac1{\rho(\mu(B))}\lf[\frac1{\mu(B)}
\int_B|(I-[I+r_B^2L]^{-1})^Mf(x)|^2\,d\mu(x)\r]^{1/2},$$
$$\wz\gz_1(f)\equiv
\lim_{c\to\fz}\sup_{B:\,r_B\ge c}\frac1{\rho(\mu(B))}\lf[\frac1{\mu(B)}
\int_B|(I-[I+r_B^2L]^{-1})^Mf(x)|^2\,d\mu(x)\r]^{1/2},$$
and
$$\wz\gz_1(f)\equiv\lim_{c\to\fz}\sup_{B:\,B\subset[B(0,c)
]^\complement}\frac1{\rho(\mu(B))}\lf[\frac1{\mu(B)}
\int_B|(I-[I+r_B^2L]^{-1})^Mf(x)|^2\,d\mu(x)\r]^{1/2}.$$
\end{definition}

\begin{proposition}\label{p3.4}
Let $L$, $\Phi$, $\rho$ and $M$ be as in Definition \ref{d3.2}. Then
$f\in\vmol$ if and only if $f\in\tvmol$.
\end{proposition}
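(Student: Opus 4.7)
The plan is to mimic the arguments used to establish the norm equivalence in Proposition \ref{p3.1}, but track the three vanishing conditions $\gz_i$ and $\wz\gz_i$ along the way. Both directions of the equivalence $\vmol\Leftrightarrow\tvmol$ rely on integral representations relating the semigroup $\{e^{-r_B^2L}\}$ to the resolvent $\{(I+r_B^2L)^{-1}\}$, together with the Davies-Gaffney estimates (Lemma \ref{l2.2}) and the finite-covering lemma (Lemma \ref{l2.1}).

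For the inclusion $\vmol\subset\tvmol$, the starting point is the subordination identity $(I+r_B^2L)^{-1}=\int_0^\fz e^{-s}e^{-sr_B^2L}\,ds$, which, upon iteration, gives
\begin{equation*}
\Ilm=\int_{(0,\fz)^M}e^{-(s_1+\cdots+s_M)}\prod_{j=1}^M(I-e^{-s_jr_B^2L})\,ds_1\cdots ds_M.
\end{equation*}
Given $f\in\vmol$ and $\ez>0$, I would use $\gz_1(f)=0$ and $\gz_2(f)=0$ to choose thresholds $0<c_0<c_1$ such that the defining quantity in Definition \ref{d3.3} is less than $\ez$ whenever $r_{B'}\le c_0$ or $r_{B'}\ge c_1$. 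Under the change of variable $u_j\ev\sqrt{s_j}\,r_B$, split the $s$-integral into three regions: (a) all $u_j\le c_0$, handled by $\gz_1(f)=0$; (b) some $u_j\ge c_1$, handled by $\gz_2(f)=0$; and (c) the intermediate region, where the exponential weight $e^{-(s_1+\cdots+s_M)}$ together with the uniform bound $\|f\|_\bmo<\fz$ yields an arbitrarily small contribution. Localizing the $L^2(B)$-norm of $\prod_j(I-e^{-u_j^2L})f$ via the Davies-Gaffney estimates and covering $B$ by balls of radius comparable to $u_j$ via Lemma \ref{l2.1} then reduces (a) and (b) to finite sums of $\gz_1$- and $\gz_2$-type quantities, each bounded by $\ez$. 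This yields $\wz\gz_1(f)=0$; analogous arguments, now invoking also $\gz_3(f)=0$, give $\wz\gz_2(f)=0$ and $\wz\gz_3(f)=0$.

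For the reverse inclusion $\tvmol\subset\vmol$, one would use the Dunford-type representation
\begin{equation*}
e^{-r_B^2L}=\frac1{2\pi i}\int_\Gamma e^{-\lz r_B^2}(\lz I-L)^{-1}\,d\lz
\end{equation*}
for a suitable contour $\Gamma$ enclosing $\sz(L)$, available from Assumption $(L)_1$. After rescaling, this expresses $\Iem$ as a superposition of resolvent-type operators on related scales, and the same three-region split, Davies-Gaffney localization, and covering argument yield $\gz_i(f)=0$ from $\wz\gz_i(f)=0$. The main technical obstacle, in both directions, will be the uniform (in $r_B$) control of the tail contributions of the $s$- and $\lz$-integrals together with the binomial cross terms, so that $\|f\|_\bmo<\fz$ absorbs them while the main terms vanish; this is precisely the interplay between Lemmas \ref{l2.2} and \ref{l2.1} already employed in Proposition \ref{p3.1}, refined here to preserve the asymptotic decay as $c\to0$ or $c\to\fz$ required by Definitions \ref{d3.3} and \ref{d3.5}.
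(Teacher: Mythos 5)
Your plan, while superficially in the spirit of the paper, has a structural gap that neither of your proposed integral representations resolves. In both directions the obstacle is the same: after iterating the subordination identity (or the Dunford integral) you land on mixed products such as
\begin{equation*}
\prod_{j=1}^M\bigl(I-e^{-s_jr_B^2L}\bigr)f
\qquad\text{with }s_1,\dots,s_M\text{ ranging independently over }(0,\infty),
\end{equation*}
whereas the hypotheses $\gz_i(f)=0$ (resp.\ $\wz\gz_i(f)=0$) only give decay for the \emph{diagonal} $M$-th power $\|(I-e^{-r^2L})^Mf\|_{L^2(B')}$ with the semigroup scale matching the radius of $B'$. Off-diagonal products are not directly bounded by the diagonal ones; the natural ratio $\bigl(1-e^{-s_jr_B^2z}\bigr)/\bigl(1-e^{-s_1r_B^2z}\bigr)$ is bounded on sectors but with norm growing like $s_j/s_1$, so the weight $e^{-(s_1+\cdots+s_M)}$ does not tame the singularity near $s_1=0$. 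The same mismatch appears in the Dunford route for $\tvmol\subset\vmol$: the contour resolvents $(\lz I-L)^{-1}$ sit at complex $\lz$ on $\Gamma$, not at the real point $-r^{-2}$ that controls $(I+r^2L)^{-1}$, and iterating again produces products of resolvents at distinct scales. Your sketch never explains how to close either cross-term gap.

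The paper avoids this entirely by a different decomposition. For $\tvmol\subset\vmol$ it writes $f=f_1+f_2$ with $f_1\ev\Ilm f$ and $f_2\ev(I-\Ilm)f$, and uses the exact algebraic identity (3.14), namely $I-\Ilm=\sum_{j=1}^M\binom{M}{j}(r_B^2L)^{-j}\Ilm$, so that $f_2$ is itself expressed in terms of $f_1$. Consequently every Davies--Gaffney operator (powers of the semigroup, the averaged semigroup $(r_B^2L)^{-1}(I-e^{-r_B^2L})$, etc.)\ is applied \emph{to $f_1$}, and for those operators one only needs $L^2$-boundedness with Davies--Gaffney decay, not any vanishing hypothesis. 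All the vanishing is carried by $f_1=\Ilm f$, which is exactly the object the $\wz\gz_i$-quantities (equivalently the $\dz_k(f,B)$ of (3.11)) control. This produces the clean inequality (3.10), $\bigl[\rb\bigr]^{-1}\|\Iem f\|_{L^2(B)}\ls\sum_{k\ge0}2^{-k}\dfb$, after which the dominated convergence theorem for series finishes the argument; the converse inclusion is handled by the analogous decomposition borrowed from \cite[Lemma~8.1]{hm09}. To repair your argument you would need to either (a) prove an off-diagonal-to-diagonal comparison lemma for $\prod_j(I-e^{-s_j r_B^2 L})$ that is uniform enough to survive the $s$-integration, together with a matching of the semigroup scale $u_1=\sqrt{s_1}\,r_B$ to the ball radius via Lemma~\ref{l2.1}; or (b) adopt the paper's $f=f_1+f_2$ device and the identity (3.14), which is the key idea missing from your proposal. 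As written, the proposal does not constitute a proof.
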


\begin{proof}
Suppose that $f\in\tvmol$. To see $f\in\vmol$, it suffices to
show that
\begin{equation}\label{3.10}
\frac1{\rho(\mu(B))[\mu(B)]^{1/2}}\lf[\int_B\lf|(I-e^{-r_B^2L})^Mf(x)\r|^2
\,d\mu(x)\r]^{1/2}\ls\sum_{k=0}^\fz2^{-k}\dz_k(f,B),
\end{equation}
where
\begin{eqnarray}\label{3.11}
\dz_k(f,B)\ev&&\sup_{\{B'\st2^{k+1}B:\,r_{B'}\in[2^{-1}r_B,r_B]\}}
\frac1{\rho(\mu(B))[\mu(B)]^{1/2}}\\
&&\times\lf[\int_B\lf|(I-[I+r_B^2L]^{-1})^Mf(x)\r|^2\,d\mu(x)\r]^{1/2}.\noz
\end{eqnarray}

Indeed, since $f\in\tvmol$, by Definition \ref{d3.5} and Proposition
\ref{p3.1}, we conclude that $\dfb\ls\|f\|_\bmo$ and for all $k\in\zz_+$,
$$\lim_{c\to0}\sup_{B:\,r_B\le c}\dfb=\lim_{c\to\fz}\sup_{B:\,r_B\ge c}
\dfb=\lim_{c\to\fz}\sup_{B:\,B\subset[B(x_0,c)]^\complement}\dfb=0.$$
Then by the dominated convergence theorem for series, we have
\begin{eqnarray*}
\gz_1(f)&&=\lim_{c\to0}\sup_{B:\,r_B\le c}\frac1{\rho(\mu(B))[\mu(B)]^{1/2}}
\lf[\int_B\lf|\lf(I-e^{-r_B^2L}\r)^Mf(x)\r|^2\,d\mu(x)\r]^{1/2}\\
&&\ls\sum_{k=1}^\fz2^{-k}\lim_{c\to0}\sup_{B:\,r_B\le c}\dfb=0.
\end{eqnarray*}
Similarly we see that $\gz_2(f)=\gz_3(f)=0$, and hence $f\in\vmol$.

Let us now prove \eqref{3.10}. Write
\begin{equation}\label{3.12}
f=\lf(I-[I+r_B^2L]^{-1}\r)^Mf+\lf\{I-\lf(I-[I+r_B^2L]^{-1}\r)^M\r\}f\ev f_1+f_2.
\end{equation}
By Lemma \ref{l2.2}, we have
\begin{eqnarray}\label{3.13}
&&\lf\|\Iem f_1\r\|_{L^2(B)}\\
&&\hs\le\sum_{k=0}^\fz\lf\|\Iem(f_1\chi_{U_k(B)})\r\|_{L^2(B)}\ls
\sum_{k=0}^\fz e^{-c2^{2k}}\lf\|f_1\chi_{U_k(B)}\r\|_{L^2(\cx)}\noz\\
&&\hs\ls\rho(\mu(B))[\mu(B)]^{1/2}\sum_{k=0}^\fz e^{-c2^{2k}}2^{kn}\dfb\noz\\
&&\hs\ls\rho(\mu(B))[\mu(B)]^{1/2}\sum_{k=0}^\fz 2^{-k}\dfb,\noz
\end{eqnarray}
where $U_k(B)$ for all $k\in\zz_+$ is as in \eqref{2.5},
$c$ is a positive constant and the third inequality follows
from Lemma \ref{l2.1} that there exists a collection, $\{B_{k,1},B_{k,2},
\cdots,B_{k,N_k}\}$, of balls such that each ball $B_{k,i}$ is of
radius $r_B$, $B(x_B,2^{k+1}r_B)\st\cup_{i=1}^{N_k}B_{k,i}$ and
$N_k\ls2^{nk}$.

To estimate the remaining term, by the formula that
\begin{equation}\label{3.14}
I-\Ilm=\sum_{j=1}^M\frac{M!}{j!(M-j)!}(r_B^2L)^{-j}\Ilm
\end{equation}
(which relies on the fact that $(I-(I+r^2L)^{-1})(r^2L)^{-1}=
(I+r^2L)^{-1}$ for all $r\in(0,\fz)$), and Minkowski's
inequality, we obtain
\begin{eqnarray}\label{3.15}
&&\lf\|\Iem f_2\r\|_{L^2(B)}\\
&&\hs\ls\sum_{j=1}^M\lf\{\int_B\lf|\lf(I-e^{-r_B^2L}\r)^{M-j}
\lf[-\int_0^{r_B}\frac s{r_B^2}e^{-s^2L}\,ds\r]^jf_1(x)\r|^2
\,d\mu(x)\r\}^{1/2}\noz\\
&&\hs\ls\sum_{j=1}^M\sum_{i=0}^{M-j}\int_0^{r_B}\cdots\int_0^{r_B}
\frac{s_1}{r_B^2}\cdots\frac{s_j}{r_B^2}\|e^{-(ir_B^2+s_1^2+
\cdots+s_j^2)L}f_1\|_{L^2(B)}\,ds_1\cdots ds_j\noz\\
&&\hs\ls\sum_{j=1}^M\sum_{i=0}^{M-j}\int_0^{r_B}\!\!\cdots\!\!\int_0^{r_B}
\frac{s_1}{r_B^2}\!\cdots\!\frac{s_j}{r_B^2}\sum_{k=0}^\fz e^{-\frac{c
(2^kr_B)^2}{ir_B^2+s_1^2+\cdots+s_j^2}}\|f_1\chi_{U_k(B)}\|_{L^2(\cx)}
\,ds_1\cdots ds_j\noz\\
&&\hs\ls\rho(\mu(B))[\mu(B)]^{1/2}\sum_{k=0}^\fz
e^{-\frac{c2^{2k}}M}2^{kn}\dfb\noz\\
&&\hs\ls\rho(\mu(B))[\mu(B)]^{1/2}\sum_{k=0}^\fz 2^{-k}\dfb,\noz
\end{eqnarray}
where $c$ is a positive constant and in the penultimate inequality,
we used the fact that $\int_0^{r_B}\cdots\int_0^{r_B}
\frac{s_1}{r_B^2}\cdots\frac{s_j}{r_B^2}\,ds_1\cdots ds_j\sim1$.
Combining the estimates \eqref{3.13} and \eqref{3.15}, we obtain
\eqref{3.10}, which further implies that $\tvmol\st\vmol$.

By borrowing some ideas from the proof of \cite[Lemma 8.1]{hm09},
then similar to the proof above, we conclude that $\vmol\st\tvmol$
and the details are omitted. This finishes the proof of Proposition
\ref{p3.4}.
\end{proof}

We now characterize the space $\vmol$ via the tent space.

\begin{theorem}\label{t3.4}
Let $L$, $\Phi$ and $\rho$ be as in Definition \ref{d3.1}, $M$, $M_1\in\nn$
and $M_1\ge M>\mz$. Then the following are equivalent:

{\rm (i)} $f\in\vmol$;

{\rm (ii)} $f\in\cm_{\Phi,L}^{M_1}(\cx)$ and $(t^2L)^{M_1}e^{-t^2L}f\in\txv$.

Moreover, $\|(t^2L)^{M_1}e^{-t^2L}f\|_\txz$ is equivalent to $\|f\|_\bmo$.
\end{theorem}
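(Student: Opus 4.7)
The norm equivalence $\|\tmy f\|_\txz\sim\|f\|_\bmo$ follows from Proposition \ref{p3.2} (which furnishes $\ls$) together with the $\rho$-Carleson characterization of Theorem \ref{t3.3} (whose proof, via Theorem \ref{t3.2}, yields $\gs$); Remark \ref{r3.2}(ii) lets us identify $\bmol$ for the two exponents $M$ and $M_1$, so we may freely pass between them. The substantive content of Theorem \ref{t3.4} is therefore the equivalence of the three vanishing limits $\gz_i(f)=0\Leftrightarrow\eta_i(\tmy f)=0$ for $i\in\{1,2,3\}$.

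For (i)$\Rightarrow$(ii), I plan to prove the localized Carleson inequality
\begin{equation*}
\frac{1}{\rb}\lf[\iint_{\wh B}\lf|\tmy f(y)\r|^2\dyt\r]^{1/2}\ls\sum_{k=0}^\fz 2^{-k\dz}V_k(B)
\end{equation*}
for every ball $B\st\cx$, where $\dz>0$ and $V_k(B)$ is the supremum of $\frac{1}{\rho(\mu(B'))[\mu(B')]^{1/2}}\|(I-e^{-r_{B'}^2L})^Mf\|_{L^2(B')}$ taken over balls $B'\st 2^{k+1}B$ with $r_{B'}\sim r_B$. To establish it, I split $f=(I-e^{-r_B^2L})^{M_1}f+[I-(I-e^{-r_B^2L})^{M_1}]f$; for the first summand, I decompose into annular pieces on $U_k(B)$ (and reassemble each $2^kB$ via the covering Lemma \ref{l2.1} from balls of radius $r_B$) and apply the Davies-Gaffney off-diagonal bounds of Lemma \ref{l2.2} for $\tmy$, which deliver the $2^{-k\dz}$ decay; the second summand is expanded via the binomial identity $I-(I-e^{-r_B^2L})^{M_1}=\sum_{j=1}^{M_1}(-1)^{j+1}\binom{M_1}{j}e^{-jr_B^2L}$ and handled analogously, since $\tmy\,e^{-jr_B^2L}$ inherits the same off-diagonal decay. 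This local bound transfers each vanishing condition by a two-parameter truncation: given $\wz\ez>0$, first pick $k_0$ so that the tail $\|f\|_\bmo\sum_{k>k_0}2^{-k\dz}<\wz\ez$; for $\eta_1$, the constraint $r_B\le c$ forces $r_{B'}\sim r_B\le c$ for every admissible $B'$ and every $k\le k_0$, hence $V_k(B)<\wz\ez$ by $\gz_1(f)=0$. The case of $\eta_2$ is symmetric. For $\eta_3$, after truncating at $k_0$, choose $c$ large enough that $2^{k_0+1}B\st[B(x_0,c/2)]^\com$, reducing to $\gz_3(f)=0$ on balls of radius $\sim r_B$ sitting inside $2^{k_0+1}B$.

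The converse (ii)$\Rightarrow$(i) is handled dually. By Proposition \ref{p3.4}, it suffices to show $\wz\gz_i(f)=0$. The plan is to apply the reproducing formula of Lemma \ref{l3.2} to the pairings $\la f,\az\ra$ with $\az$ a suitable $\pmx$-molecule adapted to $B$ built from a test function $g\in L^2(B)$ with $\|g\|_{L^2(B)}\le 1$, then expand the dual kernel $\tlx\az$ via the off-diagonal Davies-Gaffney decay applied to the annular restrictions of $\az$ on $U_k(B)$. This yields the reverse local estimate
\begin{equation*}
\frac{1}{\rb}\|\ilm f\|_{L^2(B)}\ls\sum_{k=0}^\fz 2^{-k\dz}W_k(B),
\end{equation*}
where $W_k(B)$ is the supremum of $\frac{1}{\rho(\mu(B'))[\mu(B')]^{1/2}}(\iint_{\wh{B'}}|\tmy f|^2\dyt)^{1/2}$ over $B'\st 2^{k+1}B$ of comparable radius. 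The same $(k_0,c)$-truncation then converts $\eta_i(\tmy f)=0$ into $\wz\gz_i(f)=0$.

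The main technical obstacle is the derivation of these two local geometric estimates, which requires coordinating (a) the Davies-Gaffney off-diagonal decay of Lemma \ref{l2.2} for $\tmy$ and for the composed semigroup factors, (b) the doubling and $\rho$-growth bounds $\mu(2^kB)\ls 2^{nk}\mu(B)$ and $\rho(\mu(2^kB))\ls 2^{nk(1/\wz p_\Phi-1)}\rho(\mu(B))$ valid for any $\wz p_\Phi<p_\Phi^-$, and (c) the covering Lemma \ref{l2.1} that breaks $L^2$ norms on $2^kB$ into sums of $L^2$ norms on balls of radius $r_B$; the hypothesis $M>\mz$ is precisely what is needed for the resulting geometric series in $k$ to converge. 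Conceptually, the subtlest step is the $\gz_3\leftrightarrow\eta_3$ transfer, since the dyadic expansion $2^{k+1}B$ threatens to spill back toward $x_0$, but the two-step $(k_0,c)$ choice above keeps this under control.
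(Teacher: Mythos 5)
Your overall plan --- a localized Carleson-type estimate with geometric decay in $k$, summed via the dominated convergence theorem for series, plus a dual pairing against molecules built from $L^2(B)$ test functions for the converse --- matches the paper's strategy, and your remarks on the norm equivalence are essentially right. The genuine gap is in your decomposition for (i) $\Rightarrow$ (ii). You split $f=(I-e^{-r_B^2L})^{M_1}f+[I-(I-e^{-r_B^2L})^{M_1}]f$ and expand the second summand binomially into terms $e^{-jr_B^2L}f$, claiming they are handled analogously because $\tmy e^{-jr_B^2L}$ inherits off-diagonal decay. But off-diagonal decay only converts $\|\tmy e^{-jr_B^2L}(f\chi_{U_k(B)})\|_{L^2(\wh B)}$ into a multiple of $\|f\|_{L^2(U_k(B))}$; it does not reconnect to the oscillation $(I-e^{-r_{B'}^2L})^Mf$ appearing in your $V_k(B)$, and an element of $\bmo$ has no a priori control on $\|f\|_{L^2(U_k(B))}$. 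Since $1-(1-s)^{M_1}$ equals $1\ne 0$ at $s=1$, there is no polynomial $P$ with $1-(1-s)^{M_1}=P(s)(1-s)^{M}$, so the semigroup remainder genuinely cannot be rewritten as an operator applied to $\Iem f$; your $V_k(B)$ never enters the estimate of the second summand and the geometric sum does not close.

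The paper circumvents this by first invoking Proposition \ref{p3.4} ($\vmol=\tvmol$) to pass to the resolvent form of the vanishing conditions, splitting $f=\Ilm f+[I-\Ilm]f$, and then using the algebraic identity \eqref{3.14},
\[
I-\Ilm=\sum_{j=1}^M\binom{M}{j}(r_B^2L)^{-j}\Ilm,
\]
which rests on $(I-(I+r^2L)^{-1})(r^2L)^{-1}=(I+r^2L)^{-1}$ and has no semigroup analogue. This rewrites the remainder as a finite sum of $(r_B^2L)^{-j}f_1$ with $f_1=\Ilm f$ the controlled oscillation; applying $\tmy$ gives $(t^2/r_B^2)^{j}(t^2L)^{M_1-j}e^{-t^2L}f_1$, where $(t/r_B)^{2j}$ supplies the damping needed to integrate over $\wh B$, and $M_1\ge M$ keeps $M_1-j\ge 0$. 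You should invoke Proposition \ref{p3.4} in the forward direction as well (you already do for the converse), define $V_k$ via the resolvent, and use the identity above. Your (ii) $\Rightarrow$ (i) sketch is structurally sound and parallels the paper's dualization against molecules via Lemma \ref{l3.2}, but you should make the interaction between the two exponents explicit: the reproducing formula must be applied at level $M_1$ to produce $\tmy f$ under the integral, while the test molecule $\iemx g$ (or your resolvent variant) lives at level $M$, and reconciling these is part of the work.
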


\begin{proof}
We first show that (i) implies (ii).
Let $f\in\vmol$. By Proposition \ref{p3.2}, we know that $\tmy f\in\txz$.
To see that $\tmy f\in\txv$, we claim that it suffices to
show that for all balls $B$,
\begin{equation}\label{3.16}
\frac1{\rho(\mu(B))[\mu(B)]^{1/2}}\lf[\iint_{\wh B}|\tmy f(x)|^2
\,\frac{d\mu(x)\,dt}t\r]^{1/2}\ls\sum_{k=0}^\fz2^{-k}\dz_k(f,B),
\end{equation}
where $\dfb$ is as in \eqref{3.11}.
Indeed, since $f\in\vmol=\tvmol$, we conclude that for each $k\in\nn$,
$\dfb\ls\|f\|_\bmo$ and
$$\lim_{c\to0}\sup_{B:\,r_B\le c}\dfb=\lim_{c\to\fz}\sup_{B:\,r_B\ge c}
\dfb=\lim_{c\to\fz}\sup_{B:\,B\subset[B(x_0,c)]^\complement}\dfb=0.$$
Then from the dominated convergence theorem for series, we infer that
\begin{eqnarray*}
\eta_1(f)&&=\lim_{c\to0}\sup_{B:\,r_B\le c}\frac1{\rho(\mu(B))[\mu(B)]^{1/2}}
\lf[\iint_{\wh B}\lf|\tmy f(x)\r|^2\,\frac{d\mu(x)\,dt}t\r]^{1/2}\\
&&\ls\sum_{k=1}^\fz2^{-k}\lim_{c\to0}\sup_{B:\,r_B\le c}\dfb=0.
\end{eqnarray*}
Similarly we see that $\eta_2(f)=\eta_3(f)=0$, and hence $\tmy f\in\txv$.

Let us now prove \eqref{3.16}. Write $f\ev f_1+f_2$ as in \eqref{3.12}.
Then by Lemmas \ref{l2.2} and \ref{l2.3},
similar to the estimate of \eqref{3.13}, we have
\begin{eqnarray}\label{3.17}
&&\lf\{\iint_{\wh B}\lf|\tmy f_1(x)\r|^2\dxt\r\}^{1/2}\\
&&\hs\le
\sum_{k=0}^\fz\lf\{\iint_{\wh B}\lf|\tmy (f_1\cub)(x)\r|^2\dxt\r\}^{1/2}\noz\\
&&\hs\ls\|f_1\|_{L^2(4B)}+\sum_{k=3}^\fz\lf[\int_0^{r_B}\exp\lf\{
-\frac{(2^kr_B)^2}{ct^2}\r\}\,\frac{dt}t\r]^{1/2}\lf\|f_1\cub\r\|_{L^2(\cx)}\noz\\
&&\hs\ls\|f_1\|_{L^2(4B)}+\sum_{k=3}^\fz\lf\{\int_0^{r_B}\lf[
\frac{t^2}{(2^kr_B)^2}\r]^{n+1}
\,\frac{dt}t\r\}^{1/2}\lf\|f_1\cub\r\|_{L^2(\cx)}\noz\\
&&\hs\ls\rho(\mu(B))[\mu(B)]^{1/2}\sum_{k=0}^\fz 2^{-k}\dfb,\noz
\end{eqnarray}
where $U_k(B)$ for all $k\in\zz_+$ is as in \eqref{2.5} and
$c$ is a positive constant. Applying \eqref{3.14}, Lemma \ref{l2.2}
and $M_1>M$ to $f_2$, we see that
\begin{eqnarray}\label{3.18}
&&\lf\{\iint_{\wh B}\lf|\tmy f_2(x)\r|^2\dxt\r\}^{1/2}\\
&&\hs\ls
\sum_{j=1}^M\lf\{\iint_{\wh B}\lf|\tmy (r_B^2L)^{-j}f_1(x)\r|^2\dxt\r\}^{1/2}\noz\\
&&\hs\ls\sum_{j=1}^M\sum_{k=0}^\fz\lf\{\iint_{\wh B}\lf[\frac{t^2}
{r_B^2}\r]^{2j}\lf|(t^2L)^{M_1-j}e^{-t^2L} (f_1\cub)(x)\r|^2\dxt\r\}^{1/2}\noz\\
&&\hs\ls\sum_{j=1}^M\lf\{\sum_{k=0}^2\lf[\int_0^{r_B}\lf(\frac{t^2}
{r_B^2}\r)^{2j}\dt\r]^{1/2}\|f_1\|_{L^2(4B)}\r.\noz\\
&&\hs\hs\lf.+\sum_{k=3}^\fz\lf[\int_0^{r_B}\exp\lf\{
-\frac{(2^kr_B)^2}{ct^2}\r\}
\,\frac{dt}t\r]^{1/2}\lf\|f_1\cub\r\|_{L^2(\cx)}\r\}\noz\\
&&\hs\ls\|f_1\|_{L^2(4B)}+\sum_{k=3}^\fz\lf\{\int_0^{r_B}\lf[
\frac{t^2}{(2^kr_B)^2}\r]^{n+1}
\,\frac{dt}t\r\}^{1/2}\lf\|f_1\cub\r\|_{L^2(\cx)}\noz\\
&&\hs\ls\rho(\mu(B))[\mu(B)]^{1/2}\sum_{k=0}^\fz 2^{-k}\dfb.\noz
\end{eqnarray}

The estimates \eqref{3.17} and \eqref{3.18} imply \eqref{3.16},
which completes the proof that (i) implies (ii).

Conversely, let $f\in\cm_{\Phi,L}^{M_1}(\cx)$ and $\tmy f\in\txv$.
By Proposition \ref{p3.2}, we conclude that $f\in\bmo$. For any ball $B$,
write
\begin{eqnarray*}
\lf(\int_B\lf|\Iem f(x)\r|^2\,d\mu(x)\r)^{1/2}&&=\sup_{\|g\|_{L^2(B)}\le1}
\lf|\int_B\Iem f(x)\ov{g(x)}\,d\mu(x)\r|\\
&&=\sup_{\|g\|_{L^2(B)}\le1}\lf|\int_B f(x)\ov{\Iemx g(x)}\,d\mu(x)\r|.
\end{eqnarray*}
Notice that for any $g\in L^2(B)$, $\iemx g$ is a multiple of a
$(\Phi,\,M,\,\ez)_{L^*}$-molecule; see \cite[p. 43]{hm09}.
Then by Lemma \ref{l3.2}
and H\"older's inequality, we obtain
\begin{eqnarray*}
&&\lf[\int_B\lf|\Iem f(x)\r|^2\,d\mu(x)\r]^{1/2}\\
&&\hs\sim\sup_{\|g\|_{L^2(B)}\le1}
\lf|\iint_\xt\tmy f(x)t^2L^*e^{-t^2L^*}\ov{\Iemx g(x)}\dxt\r|\\
&&\hs\sim\sum_{k=0}^\fz\lf\{\iint_{V_k(B)}\lf|\tmy f(x)\r|^2\dxt\r\}^{1/2}\\
&&\hs\hs\times\sup_{\|g\|_{L^2(B)}\le1}\lf\{\iint_{V_k(B)}\lf|t^2L^*e^{-t^2L^*}
\Iemx g(x)\r|^2\dxt\r\}^{1/2}\\
&&\hs\ev\sum_{k=0}^\fz\sigma_k(f,B)\mi_k,
\end{eqnarray*}
where $V_0(B)\ev\wh B$ and $V_k(B)\ev(\wh{2^kB})\bh(\wh{2^{k-1}B})$ for
$k\in\nn$. In what follows, for $k\ge2$, let $V_{k,1}\ev(\wh{2^kB})\bh
(2^{k-2}B\times(0,\fz))$ and $V_{k,2}\ev V_k(B)\bh V_{k,1}(B)$.

For $k\in\{0,1,2\}$, by Lemmas \ref{l2.2} and \ref{l2.3}, we conclude that
\begin{eqnarray*}
\mi_k&&=\sup_{\|g\|_{L^2(B)}\le1}\lf\{\iint_{V_k(B)}\lf|t^2L^*e^{-t^2L^*}
\Iemx g(x)\r|^2\dxt\r\}^{1/2}\\
&&\ls\sup_{\|g\|_{L^2(B)}\le1}\lf\|\Iemx g\r\|_{L^2(\cx)}\ls1.
\end{eqnarray*}
Now for $k\ge3$, write
\begin{eqnarray*}
\mi_k&&\ls\sup_{\|g\|_{L^2(B)}\le1}\lf\{\iint_{V_{k,1}(B)}\lf|t^2L^*e^{-t^2L^*}
\Iemx g(x)\r|^2\dxt\r\}^{1/2}\\
&&\hs+
\sup_{\|g\|_{L^2(B)}\le1}\lf\{\iint_{V_{k,2}(B)}\cdots\r\}^{1/2}
\ev\mi_{k,1}+\mi_{k,2}.
\end{eqnarray*}
Since for any $(y,t)\in\ve$, $t\ge2^{k-2}r_B$, from Minkowski's inequality
and Lemmas \ref{l2.2} and \ref{l2.3}, it follows that
\begin{eqnarray*}
\mi_{k,2}&&=\sup_{\|g\|_{L^2(B)}\le1}\lf\{\iint_{V_{k,2}(B)}\lf|t^2L^*e^{-t^2L^*}
\Iemx g(x)\r|^2\dxt\r\}^{1/2}\\
&&=\sup_{\|g\|_{L^2(B)}\le1}\lf\{\iint_{V_{k,2}(B)}\lf|t^2L^*e^{-t^2L^*}
\lf[-\int_0^{r_B^2}L^*e^{-sL^*}\,ds\r]^M
g(x)\r|^2\dxt\r\}^{1/2}\\
&&\ls\sup_{\|g\|_{L^2(B)}\le1}\int_0^{r_B^2}\cdots\int_0^{r_B^2}\lf\{
\iint_\ve\lf|t^2(L^*)^{M+1}\r.\r.\\
&&\hs\times\lf.\lf.
e^{-(t^2+s_1+\cdots+s_M)L^*}g(x)\r|^2\dxt\r\}^{1/2}ds_1\cdots ds_M\\
&&\ls\sup_{\|g\|_{L^2(B)}\le1}\int_0^{r_B^2}\cdots\int_0^{r_B^2}\lf\{
\int_{2^{k-2}r_B}^{2^{k}r_B}\frac{t^4\|g\|_{L^2(B)}^2}{(t^2+s_1+
\cdots+s_M)^{2(M+1)}}\dt\r\}^{1/2}ds_1\cdots ds_M\\&&\ls2^{-2kM}.
\end{eqnarray*}
Similarly, we see that $\mi_{k,1}\ls2^{-2kM}$.
Let $\wz p_\Phi\in (0, p_\Phi^-)$ such that
$M>\frac n2(\frac1{\wz p_\Phi}-\frac12)$. Combining the above
estimates and the fact that $\rho$ is of upper type $1/\wz p_\Phi-1$,
we finally conclude that
\begin{eqnarray*}
&&\frac1{\rho(\mu(B))[\mu(B)]^{1/2}}\lf[\int_{ B}\lf|\Iem f(x)\r|^2\,d\mu(x)\r]^{1/2}\\
&&\hs\ls\sum_{k=0}^\fz2^{-2kM}\frac1{\rho(\mu(B))[\mu(B)]^{1/2}}\sz_k(f,B)\\
&&\hs\ls\sum_{k=0}^\fz2^{-k[2M-n(1/\wz p_\Phi-1/2)]}\frac{\sz_k(f,B)}
{\rho(\mu(2^kB))[\mu(2^kB)]^{1/2}}.
\end{eqnarray*}
Since $\tmy f\in\txv\st\tx,$
from $M>\frac n2(\frac1{\wz p_\Phi}-\frac12)$
and the dominated convergence theorem for series, we infer that
\begin{eqnarray*}
\gz_1(f)&&=\lim_{c\to0}\sup_{B:\,r_B\le c}\frac1{\rho(\mu(B))[\mu(B)]^{1/2}}
\lf[\int_B\lf|\lf(I-e^{-r_B^2L}\r)^Mf(x)\r|^2\,d\mu(x)\r]^{1/2}\\
&&\ls\sum_{k=1}^\fz2^{-k[2M-n( 1/\wz p_\Phi-1/2)]}\lim_{c\to0}\sup_{B:\,r_B\le c}
\frac{\sz_k(f,B)}{\rho(\mu(2^kB))[\mu(2^kB)]^{1/2}}=0.
\end{eqnarray*}
Similarly, $\gz_2(f)=\gz_3(f)=0$, which implies that $f\in\vmol$,
and hence completes the proof of Theorem \ref{t3.4}.
\end{proof}

\begin{remark}\rm\label{r3.3}
It follows from Theorem \ref{t3.4} that for all $M\in\nn$ and $M>\mz$, the spaces
$\vmol$ coincide with equivalent norms. Thus, in what follows, we
denote the $\vmol$ simply by $\vmo$.
\end{remark}

\section{The Dual Space of $\vmo${\label{s4}}}

\hskip\parindent In this section, we show that the dual space of
$\vmo$ is $\bxx$, where the \emph{space} $\bxx$
denotes the Banach completion of the space
$\hxx$; see Definition \ref{d4.3} and Theorem \ref{t4.2} below.

The proof of the following proposition is similar to that of
\cite[Proposition 4.1]{jyz09}; we omit the details here.

\begin{proposition}\label{p4.1}
Let $\Phi$ satisfy Assumption $(\Phi)$. Then the dual space of $\tx$
is $\txz$. Moreover, the pairing
$$\la f,g\ra\to\int_\xt f(y,t)g(y,t)\dyt$$
for all $f\in\ttx$ and $g\in\txz$
realizes $\txz$ being equivalent to the dual of $\tx$.
\end{proposition}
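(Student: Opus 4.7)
The plan is to follow the two-direction duality argument for Orlicz-type tent spaces carried out in \cite[Proposition 4.1]{jyz09}. The two directions to establish are: (i) every $g\in\txz$ induces a bounded linear functional $\ell_g$ on $\tx$ via the stated pairing; (ii) every bounded linear functional on $\tx$ arises from some $g\in\txz$ with comparable norm.

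For direction (i), I would first apply the atomic decomposition of Lemma \ref{l3.1} to any $f\in\ttx$ (which serves as a dense subspace of $\tx$ on which the pairing integral is absolutely convergent), writing $f=\sum_j\lz_ja_j$ with $\tx$-atoms $a_j$ supported in $\wh B_j$ and $\blz(\{\lz_ja_j\})\ls\|f\|_\tx$. The key atomic pairing estimate comes from the Cauchy--Schwarz inequality: combining the size condition {\rm(ii)} of a $\tx$-atom with the definition of $\cro$, for each atom one has
$$\left|\iint_{\wh B_j}a_j(y,t)\,\ov{g(y,t)}\dyt\right|\le\frac{1}{[\mu(B_j)]^{1/2}\rho(\mu(B_j))}\cdot\rho(\mu(B_j))[\mu(B_j)]^{1/2}\|g\|_\txz=\|g\|_\txz.$$
Summing over $j$ and converting the atomic $\blz$-bound into an Orlicz-norm bound via the concavity of $\Phi$ together with the identity $\mu(B_j)\rho(\mu(B_j))=[\Phi^{-1}([\mu(B_j)]^{-1})]^{-1}$ yields $|\ell_g(f)|\ls\|g\|_\txz\|f\|_\tx$, which then extends by density to all of $\tx$.

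For direction (ii), given $\ell\in(\tx)^*$ and any ball $B\st\cx$, I observe that every $h\in L^2(\wh B,d\mu\,dt/t)$ with $\iint_{\wh B}|h|^2\,d\mu\,dt/t\le[\mu(B)]^{-1}[\rho(\mu(B))]^{-2}$ is a $\tx$-atom, so $|\ell(h)|\le\|\ell\|_{(\tx)^*}$. Consequently $\ell$ restricts to a bounded linear functional on $L^2(\wh B,d\mu\,dt/t)$ of norm at most $[\mu(B)]^{1/2}\rho(\mu(B))\|\ell\|_{(\tx)^*}$, and the Riesz representation theorem produces a unique $g_B\in L^2(\wh B,d\mu\,dt/t)$ realizing $\ell$ there with the same norm bound. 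Uniqueness makes these local representatives compatible on overlaps, patching together to a single function $g$ on $\xt$, and the local Riesz bound translates directly into $\cro(g)(x)\le C\|\ell\|_{(\tx)^*}$ for $\mu$-a.e.\ $x\in\cx$, giving $g\in\txz$ with $\|g\|_\txz\ls\|\ell\|_{(\tx)^*}$.

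The main obstacle lies in direction (i): since $\sum_j|\lz_j|$ is not controlled by $\|f\|_\tx$ for general Orlicz $\Phi$ (the control is only through the Luxemburg-type quantity $\blz$), naively summing the atomic pairing bounds fails. Hence the careful passage from the atomic sum estimate to the Orlicz-norm estimate, exploiting the subadditivity of $\Phi$ (via concavity) together with the structural relation between $\Phi$ and $\rho$ exactly as in \cite{jyz09}, is the technically delicate step that warrants the omission of the detailed calculation.
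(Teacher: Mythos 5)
Your proposal is correct and follows the same route the paper intends: the paper omits its proof by reference to \cite[Proposition 4.1]{jyz09}, and your two-direction argument (atomic pairing via Cauchy--Schwarz plus the concavity/subadditivity argument to control $\sum_j|\lz_j|$ through the identity $\mu(B_j)\rho(\mu(B_j))=[\Phi^{-1}([\mu(B_j)]^{-1})]^{-1}$, and, conversely, local Riesz representatives on $L^2(\wh B,d\mu\,dt/t)$ glued together by uniqueness) is exactly the standard argument; the concavity step you allude to is precisely the computation carried out in the proof of Lemma \ref{l4.1} of this paper. One small slip: you describe $\ttx$ as "a dense subspace of $\tx$," but the inclusion runs the other way --- $\tx$ is dense in $\ttx$ (Lemma \ref{l4.1}, $\ttx$ being the Banach completion of $\tx$); the dense subspace you want for direction (i) is either $\tx$ itself via Lemma \ref{l3.1}, or $\txb$.
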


We now introduce a new tent space $\ttx$ and present some properties.

\begin{definition}\rm\label{d4.1}
Let $p\in(0,1)$. The \emph{space} $\ttx$ is defined to be the space of
all $f=\sum_{j=1}^\fz\lz_j a_j$ in $(\txz)^*$, where $\{a_j\}_{j=1}^\fz$
are $\tx$-atoms and $\{\lz_j\}_{j=1}^\fz\subset\cc$ such
that $\sum_{j=1}^\fz|\lz_j|<\fz$. If $f\in\ttx$,
then define $\|f\|_\ttx\ev\inf\{\sum_{j=1}^\fz|\lz_j|\}$, where
the infimum is taken over all the possible decompositions of $f$
as above.
\end{definition}

By \cite[Lemma 3.1]{hm09},
$\ttx$ is a Banach space. Moreover, from Definition \ref{d4.1},
it is easy to deduce that $\tx$ is dense in $\ttx$; in other
words, $\ttx$ is a \emph{Banach completion} of $\tx$.

\begin{lemma}\label{l4.1}
Let $\Phi$ satisfy Assumption $(\Phi)$. Then $\tx$ is a dense
subspace of $\ttx$ and there exists a positive constant $C$ such
that for all $f\in\tx$, $\|f\|_\ttx\le C\|f\|_\tx$.
\end{lemma}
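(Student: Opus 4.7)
The plan is to prove both assertions by leveraging the atomic decomposition in Lemma \ref{l3.1} together with the convexity of $\Phi^{-1}$. Given $f\in\tx$, Lemma \ref{l3.1} yields a representation $f=\sum_{j=1}^\fz\lz_j a_j$ with $\tx$-atoms $a_j$ supported in $\wh B_j$ such that $\blz(\{\lz_j a_j\})\le C\|f\|_\tx$ and the series converges in $\tx$. If I can show $\sum_j|\lz_j|\le C\|f\|_\tx$ and that this series converges to $f$ in $(\txz)^*$, the definition of $\ttx$ will immediately give $\|f\|_\ttx\le\sum_j|\lz_j|\le C\|f\|_\tx$, yielding the continuous embedding $\tx\hookrightarrow\ttx$.

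The crucial step is the $\ell^1$ bound on $\{\lz_j\}$. I would use that $\Phi$ is concave, continuous and strictly increasing with $\Phi(0)=0$, which forces $\Phi^{-1}$ to be \emph{convex} on $[0,\fz)$ with $\Phi^{-1}(0)=0$; hence $\Phi^{-1}(us)\le u\Phi^{-1}(s)$ for all $u\in[0,1]$ and $s\in(0,\fz)$. Choose $\lz>\blz(\{\lz_j a_j\})$ arbitrarily close to the infimum so that $\sum_j u_j\le 1$, where $t_j\ev|\lz_j|/[\lz\mu(B_j)\rho(\mu(B_j))]$ and $u_j\ev\mu(B_j)\Phi(t_j)\in[0,1]$. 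Using the identity $\mu(B_j)\rho(\mu(B_j))=1/\Phi^{-1}(1/\mu(B_j))$ from \eqref{2.11}, the convexity inequality applied at $s=1/\mu(B_j)$ gives
$$|\lz_j|=\lz\,\frac{\Phi^{-1}(u_j/\mu(B_j))}{\Phi^{-1}(1/\mu(B_j))}\le \lz u_j;$$
summing over $j$ and letting $\lz\to\blz(\{\lz_j a_j\})$ will yield $\sum_j|\lz_j|\le C\|f\|_\tx$.

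For the convergence of $\sum_j\lz_j a_j$ to $f$ in $(\txz)^*$, I would invoke Proposition \ref{p4.1}, which identifies $(\tx)^*=\txz$; the induced canonical inclusion $\tx\hookrightarrow(\txz)^*$ is continuous, so the $\tx$-convergence from Lemma \ref{l3.1} transfers to $(\txz)^*$. For the density assertion, I would use that for any $f\in\ttx$ with representation $f=\sum_j\lz_j a_j$ and $\sum_j|\lz_j|<\fz$, the partial sums $S_N\ev\sum_{j=1}^N\lz_j a_j$ are finite linear combinations of atoms and hence lie in $\tx$, while $\|f-S_N\|_\ttx\le\sum_{j>N}|\lz_j|\to 0$ as $N\to\fz$, proving that $\tx$ is dense in $\ttx$.

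The main obstacle will be the convexity-of-$\Phi^{-1}$ step: for $\Phi(t)=t^p$ with $p<1$, the $\ell^p$-type atomic norm $\blz$ does not dominate an arbitrary $\ell^1$ coefficient sum, so one cannot hope to bound $\sum_j|\lz_j|$ for a generic decomposition. It is precisely the specific atomic decomposition produced by Lemma \ref{l3.1}, combined with the convexity of $\Phi^{-1}$ forced by the concavity of $\Phi$, that bridges the gap between the two norms and makes the embedding $\tx\hookrightarrow\ttx$ continuous.
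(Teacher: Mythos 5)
Your proof is correct and follows essentially the same route as the paper: an atomic decomposition from Lemma \ref{l3.1}, an $\ell^1$ bound on the coefficients from the concavity hypothesis, and the $(T_\Phi^\infty(\cx))^*$-convergence/density observations. The only cosmetic difference is that you package the coefficient bound via convexity of $\Phi^{-1}$, whereas the paper phrases the same inequality $\Phi(ts)\ge t\Phi(s)$ for $t\in(0,1]$ in terms of the upper type $1$ property of $\Phi$; these are equivalent, and your version even yields $\sum_j|\lambda_j|\le\Lambda(\{\lambda_ja_j\})$ with constant $1$.
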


\begin{proof}
Let $f\in\tx$. By Theorem \ref{l3.1}, there exist
$\tx$-atoms $\{a_j\}_{j=1}^\fz$ and
$\{\lz_j\}_{j=1}^\fz\subset\cc$ such that \eqref{3.1} and
\eqref{3.2} hold.

For any $L\in\nn$, set $\sz_L\ev\sum_{j=1}^L|\lz_j|$. Since $\Phi$
is of upper type $1$, by this together with $\rho(t)=t^{-1}
/\Phi^{-1}(t^{-1})$ for all $t\in(0,\fz)$, we obtain
$$\sum_{j=1}^\fz \mu(B_j)\Phi\lf(\frac{|\lz_j|}{\sz_L \mu(B_j)\rho(\mu(B_j))}\r)
\ge\sum_{j=1}^L\mu(B_j)\Phi\lf(\frac{1}{\sz_L \mu(B_j)\rho(\mu(B_j))}\r)
\frac{|\lz_j|}{\sz_L}\gs 1,$$
which implies that
$$\sum_{j=1}^L|\lz_j|\ls\blz(\{\lz_ja_j\}_{j=1}^\fz)\ls\|f\|_\tx.$$
Letting $L\to\fz$, we further conclude that $\sum_{j=1}^\fz|\lz_j|\ls\|f\|_\tx$.

Since $f\in\tx$ and $\txx=\txz$, we see that
$$f\in\tx\st(\txx)^*=(\txz)^*.$$
Thus, $f\in(\txz)^*$ and $\|f\|_{(\txz)^*}\ls\|f\|_\tx$. Recall that for
any $\ell\in(\txz)^*$, its $(\txz)^*$ norm is defined by
$$\|\ell\|_{(\txz)^*}=\sup_{\|g\|_\txz\le1}|\ell(g)|.$$
Observe also that $a_j\in(\txz)^*$ for all $j\in\nn$. Now, from these
observations, the monotone convergence theorem and H\"older's inequality,
it follows that
\begin{eqnarray*}
\lf\|f-\sum_{j=1}^L\lz_ja_j\r\|_{(\txz)^*}
&&\!=\!\sup_{\|g\|_\txz\le1}\lf|\int_\xt\lf[f(x,t)-\sum_{j=1}^L\lz_ja_j
(x,t)\r]g(x,t)\dxt\r|\\
&&\le\sup_{\|g\|_\txz\le1}\int_\xt\sum_{j= L+1}^\fz|\lz_j||a_j(x,t)g(x,t)|\dxt\\
&&=\sup_{\|g\|_\txz\le1}\sum_{j= L+1}^\fz|\lz_j|\int_{\wh{B_j}}|a_j(x,t)g(x,t)|\dxt\\
&&\le
\sup_{\|g\|_\txz\le1}\sum_{j= L+1}^\fz|\lz_j|\|a_j\|_\txe\|g\chi_{\wh{B_j}}\|_\txe
\le\sum_{j= L+1}^\fz|\lz_j|\to0,
\end{eqnarray*}
as $L\to\fz$. Thus, the series in \eqref{3.1} converges in $(\txz)^*$, which
further implies that $f\in\ttx$ and
$\|f\|_\ttx\le\sum_{j=1}^\fz|\lz_j|\ls\|f\|_\tx.$
This finishes the proof of Lemma \ref{l4.1}.
\end{proof}

\begin{lemma}\label{l4.2}
Let $\Phi$ satisfy Assumption $(\Phi)$. Then $\txb$ is dense in $\ttx$.
\end{lemma}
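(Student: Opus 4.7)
The plan is to exhibit, for an arbitrary $f\in\ttx$, a sequence in $\txb$ converging to $f$ in the $\ttx$-norm. The natural candidates are the partial sums of the atomic decomposition supplied by Definition~\ref{d4.1}, so the whole argument reduces to two elementary checks.

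First, I would verify that every $\tx$-atom $a$ adapted to a ball $B$ actually lies in $\txb$. Its support is contained in $\wh B\subset B\times(0,r_B]$, which is bounded in $\xt$, so only $\|a\|_{T_2^2(\cx)}<\fz$ needs checking. By Fubini's theorem and the doubling property \eqref{2.1},
\[
\int_\cx [\ca(a)(x)]^2\,d\mu(x)
=\iint_{\wh B}|a(y,t)|^2\lf(\int_{B(y,t)}\frac{d\mu(x)}{V(x,t)}\r)\frac{d\mu(y)\,dt}{t}
\ls\iint_{\wh B}|a(y,t)|^2\,\frac{d\mu(y)\,dt}{t},
\]
which is finite by condition (ii) in the definition of a $\tx$-atom. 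Hence each atom, and therefore each finite linear combination of atoms, belongs to $\txb$.

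Next, given $f\in\ttx$, fix an atomic representation $f=\sum_{j=1}^\fz\lz_j a_j$ in $(\txz)^*$ with $\sum_{j=1}^\fz|\lz_j|<\fz$, and set $S_N\ev\sum_{j=1}^N\lz_j a_j$ for $N\in\nn$. By the first step, $S_N\in\txb$. The tail $f-S_N=\sum_{j=N+1}^\fz\lz_j a_j$ is itself an admissible atomic decomposition of $f-S_N$ in $(\txz)^*$, so from the very definition of $\|\cdot\|_\ttx$ in Definition~\ref{d4.1} we obtain
\[
\|f-S_N\|_\ttx\le\sum_{j=N+1}^\fz|\lz_j|\longrightarrow 0\quad\text{as }N\to\fz,
\]
which gives the required density.

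The only mildly technical point is the first step, i.e., ensuring that atoms are bona fide elements of $T_2^2(\cx)$ with bounded support; this is dispatched by the short Fubini/doubling computation above. Everything else is bookkeeping with the norm from Definition~\ref{d4.1}, and no additional tools beyond those already established (in particular Lemma~\ref{l3.1} is not needed here, since $f\in\ttx$ already comes equipped with an atomic decomposition by definition).
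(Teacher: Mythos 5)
Your first step is where the argument breaks down. You claim that every $\tx$-atom $a$ adapted to a ball $B$ lies in $\txb$ because $\supp a\subset\wh B$ is ``bounded.'' But the tent $\wh B=\{(x,t):d(x,B^\com)\ge t\}$ contains points $(x,t)$ with $t$ arbitrarily close to $0$ (indeed, for any $x\in B$ one has $d(x,B^\com)>0$, so $(x,t)\in\wh B$ for all sufficiently small $t>0$). The space $\txb$, as it is used consistently throughout Section~4, requires the support to be bounded \emph{away from} $t=0$ as well as in $\cx$ and in $t$ from above: this is why the sets $O_k$ appearing in Lemma~\ref{l4.4} and in the proof of Theorem~\ref{t4.1} are defined with $t\in(1/k,k)$, and it is what makes the inequality $\|f\|_\txz\le C(K)\|f\|_{T_2^2(\cx)}$ (used in the proof of Theorem~\ref{t4.1} to apply Riesz representation on $L^2(O_k)$) valid. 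That inequality is false if $K$ is allowed to reach down to $t=0$, since $1/(\rho(\mu(B))[\mu(B)]^{1/2})\to\fz$ as $r_B\to0$. Thus $\tx$-atoms, and hence your partial sums $S_N$, need not lie in $\txb$, and the proposed density argument does not close.

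The paper sidesteps this by first invoking Lemma~\ref{l4.1} to reduce to approximating $g\in\tx$, and then truncating $g$ to $g_k\ev g\chi_{O_k}$ with $O_k\ev\{(x,t):\dist(x,x_0)<k,\,1/k<t<k\}$, which \emph{is} in $\txb$; convergence $\|g-g_k\|_\tx\to0$ follows from dominated convergence and the continuity of $\Phi$, and then $\|g-g_k\|_\ttx\ls\|g-g_k\|_\tx$ by Lemma~\ref{l4.1}. Your argument could be repaired along the same lines: replace $S_N$ by $S_N\chi_{O_k}$, note $S_N\chi_{O_k}\in\txb$, control $\|S_N-S_N\chi_{O_k}\|_\ttx\ls\|S_N-S_N\chi_{O_k}\|_\tx\to0$ (dominated convergence, using that $\ca(S_N)\in L^\Phi(\cx)$), and combine with your tail estimate $\|f-S_N\|_\ttx\le\sum_{j>N}|\lz_j|$ via a diagonal choice of $k=k_N$. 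The tail estimate itself and the $T_2^2$-finiteness computation for an individual atom are both fine; the missing ingredient is the truncation in $t$.
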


\begin{proof}
Since $\tx$ is dense in $\ttx$, to prove this lemma, it suffices to prove that
$\txb$ is dense in $\tx$ in the norm $\|\cdot\|_\ttx$.

Fix $x_0\in\cx$. For any $g\in\tx$ and $k\in\nn$, let $g_k\ev g\chi_{O_k}$, where
$$O_k\ev\{(x,t)\in\xt:\ \dist(x,x_0)<k,\ t\in(1/k,k)\}.$$
By the dominated convergence theorem and the continuity of $\Phi$,
we conclude that for any $\lz>0$,
$$\lim_{k\to\fz}\int_\cx\Phi\lf(\frac{\ca(g-g_k)(x)}\lz\r)\,d\mu(x)
=\int_\cx\lim_{k\to\fz}\Phi\lf(\frac{\ca(g-g_k)(x)}\lz\r)\,d\mu(x)=0,$$
which implies that $\lim_{k\to\fz}\|g-g_k\|_\ttx=0$. Then, by Lemma
\ref{l4.1}, we see that
$$\|g-g_k\|_\ttx\ls\|g-g_k\|_\tx\to0,$$
as $k\to\fz$, which completes the proof of Lemma \ref{l4.2}.
\end{proof}

\begin{lemma}\label{l4.3}
Let $\Phi$ satisfy Assumption $(\Phi)$. Then $(\ttx)^*=\txz$
via the pairing
$$\la f,g\ra\to\int_\xt f(y,t)g(y,t)\dyt$$
for all $f\in\ttx$ and $g\in\txz$.
\end{lemma}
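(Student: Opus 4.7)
The plan is to establish $(\ttx)^* = \txz$ via the natural pairing by proving the two inclusions $\txz\hookrightarrow(\ttx)^*$ and $(\ttx)^*\hookrightarrow\txz$ separately. Throughout, I would exploit the fact that $\ttx$ is defined as a subspace of $(\txz)^*$ (Definition \ref{d4.1}), which already makes the pairing $\la f,g\ra$ intrinsically defined for $f\in\ttx$ and $g\in\txz$.

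For the forward direction $\txz\hookrightarrow(\ttx)^*$, I would first establish the key atomic bound
$$\left|\int_\xt a(y,t)g(y,t)\dyt\right|\le\|g\|_\txz$$
for every $\tx$-atom $a$ supported in some $\wh B$. This would follow from Cauchy--Schwarz, combining the atomic size condition $\|a\|_{T_2^2(\cx)}\le[\mu(B)]^{-1/2}[\rho(\mu(B))]^{-1}$ with the bound $\|g\chi_{\wh B}\|_{T_2^2(\cx)}\le\|g\|_\txz\rho(\mu(B))[\mu(B)]^{1/2}$, which comes from evaluating $\cro(g)$ at any $x\in B$. Then for any $f=\sum_j\lz_ja_j\in\ttx$, summing term-by-term and taking the infimum over admissible atomic decompositions yields $|\la f,g\ra|\ls\|f\|_\ttx\|g\|_\txz$, so that $g$ induces a bounded linear functional on $\ttx$ of norm $\ls\|g\|_\txz$.

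For the reverse direction, given $\ell\in(\ttx)^*$, I would restrict $\ell$ to $\tx$. Since $\tx\subset\ttx$ continuously by Lemma \ref{l4.1}, this restriction lies in $(\tx)^*$ with norm $\ls\|\ell\|_{(\ttx)^*}$. Proposition \ref{p4.1} then produces $g\in\txz$ with $\ell(f)=\int_\xt f(y,t)g(y,t)\dyt$ for every $f\in\tx$, together with $\|g\|_\txz\ls\|\ell\|_{(\ttx)^*}$. To extend this representation to all of $\ttx$, I would invoke density of $\tx$ in $\ttx$ (Lemma \ref{l4.1}): given $f\in\ttx$, pick $\{f_n\}\subset\tx$ with $f_n\to f$ in $\ttx$, and pass to the limit, using continuity of $\ell$ on one side and the uniform bound from the forward direction on the other, to conclude $\ell(f)=\int_\xt fg\dyt$ on all of $\ttx$.

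The one point requiring care, rather than a hard obstacle, is ensuring that $\la f,g\ra$ is unambiguously defined for $f\in\ttx$ irrespective of the chosen atomic decomposition. This is taken care of automatically because Definition \ref{d4.1} places $\ttx$ inside $(\txz)^*$ from the outset; the forward atomic bound guarantees that any absolutely summable combination $\sum\lz_ja_j$ converges in $(\txz)^*$ to the same functional, whose action on $g\in\txz$ is realized precisely by the integral pairing. With this interpretation, the two inclusions above yield the claimed isomorphism $(\ttx)^*=\txz$ with equivalent norms.
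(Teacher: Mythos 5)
Your proof is correct and follows essentially the same route as the paper's: one direction uses the atomic decomposition of $f\in\ttx$ together with H\"older/Cauchy--Schwarz on each $\wh B_j$ to get $|\la f,g\ra|\ls\|g\|_\txz\sum_j|\lz_j|$, and the other direction uses the continuous dense inclusion $\tx\subset\ttx$ (Lemma \ref{l4.1}) and Proposition \ref{p4.1} to represent the restricted functional and then pass to $\ttx$. The paper states the second direction more tersely (simply that $(\tx)^*=\txz$ and $\tx\subset\ttx$ give $(\ttx)^*\subset\txz$), implicitly relying on the same density argument you spell out; your version is just more explicit.
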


\begin{proof}
By Proposition \ref{p4.1} and the definition of $\ttx$, we see that
$(\tx)^*=\txz$ and $\tx\st\ttx$, which further implies that $(\ttx)^*\st\txz$.

Conversely, let $g\in\txz$. Then for any $f\in\ttx$, choose a sequence
$\tx$-atoms $\{a_j\}_{j=1}^\fz$ and $\{\lz_j\}_{j=1}^\fz\subset\cc$
such that $f=\sum_j\lz_ja_j$ in $(\txz)^*$ and $\sum_j|\lz_j|\ls \|f\|_\ttx$.
Thus, by H\"older's inequality, we obtain
\begin{eqnarray*}
|\la f,g\ra|&&\le\sum_j\int_\xt|a_j(x,t)g(x,t)|\dxt\\
&&\le \|g\|_\txz\sum_j|\lz_j|\ls\|g\|_\txz\|f\|_\ttx,
\end{eqnarray*}
which implies that $g\in(\ttx)^*$, and hence completes the proof of
Lemma \ref{l4.3}.
\end{proof}

\begin{lemma}\label{l4.4}
Let $\Phi$ satisfy Assumption $(\Phi)$. If $f\in\ttx$, then
\begin{equation}\label{4.1}
\|f\|_\ttx=\sup_{g\in\txb,\,\|g\|_\txz\le1}\lf|\int_\xt f(x,t)g(x,t)\dxt\r|.
\end{equation}
\end{lemma}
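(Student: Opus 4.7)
The plan is to prove \eqref{4.1} by establishing two matching inequalities, with the harder direction being an approximation argument that replaces an arbitrary testing element of $\txz$ by one of bounded support in $\txb$.

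For the inequality ``$\ge$'', I would fix $g\in\txb$ with $\|g\|_\txz\le1$ and use an arbitrary atomic representation $f=\sum_j\lz_ja_j$ from Definition \ref{d4.1}. Applying H\"older's inequality to each $\tx$-atom $a_j$ supported in $\wh B_j$, combined with the size condition on $a_j$ and the defining inequality of $\txz$, yields
\begin{equation*}
\lf|\iint_\xt a_jg\dxt\r|\le\lf(\iint_{\wh B_j}|a_j|^2\dxt\r)^{1/2}\lf(\iint_{\wh B_j}|g|^2\dxt\r)^{1/2}\le\|g\|_\txz,
\end{equation*}
so term-by-term summation and infimisation over decompositions deliver $|\int_\xt fg\dxt|\le\|g\|_\txz\|f\|_\ttx\le\|f\|_\ttx$. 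This is essentially the H\"older step already appearing in the proof of Lemma \ref{l4.3}.

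For the reverse inequality ``$\le$'', I would first upgrade Lemma \ref{l4.3} to an isometric identification $(\ttx)^*=\txz$. One direction, $\|\ell_g\|_{(\ttx)^*}\le\|g\|_\txz$, is exactly the estimate just proved. For the other, $\|\ell_g\|_{(\ttx)^*}\ge\|g\|_\txz$, I would test against the renormalized $\tx$-atom
\begin{equation*}
a_B\ev\frac{\ov{g}\chi_{\wh B}}{\mu(B)^{1/2}\rho(\mu(B))\lf(\iint_{\wh B}|g|^2\dxt\r)^{1/2}}
\end{equation*}
attached to each ball $B$: it is admissible as a $\tx$-atom, so $\|a_B\|_\ttx\le 1$, while a direct computation gives $\ell_g(a_B)=(\iint_{\wh B}|g|^2\dxt)^{1/2}/(\mu(B)^{1/2}\rho(\mu(B)))$, whose supremum over $B$ recovers $\|g\|_\txz$. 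The Hahn-Banach theorem then yields
\begin{equation*}
\|f\|_\ttx=\sup_{g\in\txz,\,\|g\|_\txz\le1}\lf|\int_\xt fg\dxt\r|.
\end{equation*}

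Given $\ez>0$, I would next pick such a $g\in\txz$ with $|\int fg|>\|f\|_\ttx-\ez$ and truncate it along the lines of the proof of Lemma \ref{l4.2}, setting $g_k\ev g\chi_{O_k}$ with $O_k\ev\{(x,t)\in\xt:\,\dist(x,x_0)<k,\,t\in(1/k,k)\}$. Since $|g_k|\le|g|$, monotonicity of the $\txz$ norm gives $\|g_k\|_\txz\le\|g\|_\txz\le1$, and a Fubini computation using $O_k\subset\wh B(x_0,2k)$ shows $\ca(g_k)\in L^2(\cx)$, hence $g_k\in\txb$. The main obstacle is then establishing $\int fg_k\to\int fg$ as $k\to\fz$: expanding $f=\sum_j\lz_ja_j$ with $\sum_j|\lz_j|<\fz$ and invoking continuity of $\ell_{g_k}$ and $\ell_g$ on $\ttx$, one has $\int f(g-g_k)=\sum_j\lz_j\int a_j(g-g_k)$; for each fixed $j$, dominated convergence on $\wh B_j$ with integrable majorant $|a_jg|$ produced by H\"older gives $\int a_j(g-g_k)\to 0$, and a second dominated convergence on the sum, controlled by the summable majorant $2\|g\|_\txz\sum_j|\lz_j|$, yields the full limit. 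For $k$ large enough this produces $g_k\in\txb$ with $\|g_k\|_\txz\le 1$ and $|\int fg_k|>\|f\|_\ttx-2\ez$, and letting $\ez\to 0$ closes the reverse inequality and hence \eqref{4.1}.
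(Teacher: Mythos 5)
Your argument is correct and follows the same overall route as the paper: reduce to showing $\|f\|_\ttx$ equals the supremum over all $g\in\txz$ with $\|g\|_\txz\le1$, then truncate a near-extremal $g$ to $g\chi_{O_k}\in\txb$ and pass to the limit. The one place you go beyond the paper is in establishing the \emph{exact} duality $\|\ell_g\|_{(\ttx)^*}=\|g\|_\txz$: the paper simply cites Lemma \ref{l4.3} (whose stated proof only yields $\|\ell_g\|_{(\ttx)^*}\ls\|g\|_\txz$), whereas you supply the missing lower bound by testing $\ell_g$ against the renormalized $\tx$-atom $a_B\propto\ov g\chi_{\wh B}$, which has $\|a_B\|_\ttx\le1$ by Definition \ref{d4.1} and realizes $(\iint_{\wh B}|g|^2\,d\mu\,dt/t)^{1/2}/(\mu(B)^{1/2}\rho(\mu(B)))$. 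That extra step is exactly what is needed for the stated equality (rather than an equivalence) in \eqref{4.1}, so your version is a welcome tightening. Your double dominated-convergence argument for $\int fg_k\to\int fg$ is also fine and is an explicit unpacking of the paper's terser remark that $fg\in L^1(\xt)$ (which again rests on the same estimate $\sum_j|\lz_j|\int|a_jg|\le\|g\|_\txz\sum_j|\lz_j|<\fz$).
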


\begin{proof}
Let $f\in\ttx$. From Lemma \ref{4.2}, we deduce that
$$\|f\|_\ttx=\sup_{\|g\|_\txz\le1}\lf|\int_\xt f(x,t)g(x,t)\dxt\r|.$$
Thus, for any $\bz>0$, there exists $g\in\txz$
such that $\|g\|_\txb\le1$ and
$$\lf|\int_\xt f(x,t)g(x,t)\dxt\r|\ge\|f\|_\ttx-\frac\bz2.$$
Observe here that $fg\in L^1(\xt)$. Fix $x_0\in\cx$. Let
$$O_k\ev\{(x,t)\in\xt:\ \dist(x,x_0)<k,\ 1/k<t<k\}.$$
Then there exists $k\in\nn$ such that
$$\lf|\int_\xt f(x,t)g(x,t)\chi_{O_k}\dxt\r|\ge\|f\|_\ttx-\bz.$$
Obviously, $g\chi_{O_k}\in\txb$. Thus, \eqref{4.1} holds, which
completes the proof of Lemma \ref{l4.4}.
\end{proof}

The following lemma is a slight modification of \cite[Lemma 4.2]{cw77};
see also \cite{jya}. We omit the details here.

\begin{lemma}\label{l4.5}
Let $\Phi$ satisfy Assumption $(\Phi)$. Suppose that $\{f_k\}_{k=1}^\fz$
is a bounded family of functions in $\ttx$. Then there exist $f\in\ttx$ and a
subsequence $\{f_{k_j}\}_{j=1}^\fz$
of $\{f_{k}\}_{k=1}^\fz$ such that for all $g\in\txb$,
$$\lim_{j\to\fz}\int_\xt f_{k_j}(x,t)g(x,t)\dxt=\int_\xt f(x,t)g(x,t)\dxt.$$
\end{lemma}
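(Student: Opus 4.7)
The plan is to mirror the classical argument behind \cite[Lemma 4.2]{cw77}: extract the subsequence by a diagonal procedure against a countable dense family of test functions in $\txb$, and then realize the resulting limiting functional as an element of $\ttx$.

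Set $M:=\sup_k\|f_k\|_\ttx<\fz$. By Lemma \ref{l4.3}, every $f_k$ acts on $\txz$ via
$$\Bigl|\int_\xt f_k(x,t)g(x,t)\dxt\Bigr|\le\|f_k\|_\ttx\|g\|_\txz\le M\|g\|_\txz.$$
Exploiting the separability of $(\cx,d,\mu)$ implicit in the doubling condition \eqref{2.1}, together with the structure of $\txb$ (bounded support in $\xt$ and membership in $T_2^2(\cx)$), I would construct a countable subset $\{g_m\}_{m=1}^\fz\subset\txb$ that is dense in $\txb$ with respect to $\|\cdot\|_\txz$. For each fixed $m$ the scalar sequence $\{\int f_k g_m\dxt\}_k$ is bounded, so a Cantor diagonal procedure provides a subsequence $\{f_{k_j}\}_{j=1}^\fz$ along which $T(g_m):=\lim_{j\to\fz}\int f_{k_j}g_m\dxt$ exists for every $m$.

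To extend $T$ to all of $\txb$, for arbitrary $g\in\txb$ I would pick $g_m$ close to $g$ in $\|\cdot\|_\txz$ and use
$$\Bigl|\int(f_{k_j}-f_{k_{j'}})g\dxt\Bigr|\le\Bigl|\int(f_{k_j}-f_{k_{j'}})g_m\dxt\Bigr|+2M\|g-g_m\|_\txz$$
to see that $\{\int f_{k_j}g\dxt\}_j$ is Cauchy; defining $T(g)$ as its limit yields a linear functional on $\txb$ with $|T(g)|\le M\|g\|_\txz$.

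The heart of the argument, and the step I expect to be the main obstacle, is to realize $T$ as pairing against some $f\in\ttx$ rather than merely as a bounded linear functional on $(\txb,\|\cdot\|_\txz)$. My preferred route is via atomic decompositions: write $f_{k_j}=\sum_i\lz_{k_j,i}a_{k_j,i}$ with $\sum_i|\lz_{k_j,i}|\le M+1/j$, and use the separability of the family of tents over balls in $\cx$ (centers in a countable dense set, rational radii) to perform a further diagonal extraction on the atomic coefficients. This should produce a limiting decomposition $f=\sum_i\mu_i a_i$ with $\sum_i|\mu_i|\le M$, placing $f\in\ttx$ by Definition \ref{d4.1}. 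An alternative is to apply the Banach--Alaoglu theorem in $(\txz)^*$ to extract a weak-$*$ limit $f\in(\txz)^*$ with $\|f\|_{(\txz)^*}\le M$, and then invoke Lemma \ref{l4.4} together with the density of $\txb$ to conclude that $f$ is in fact atomically representable. The genuine difficulty in either approach is guaranteeing that the atomic structure persists in the limit, since $\ttx$ sits inside $(\txz)^*$ only via such representability; this is where the separability of balls in $\cx$ and a careful regrouping of atoms across the sequence become essential.
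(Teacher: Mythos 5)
The overall strategy you outline — diagonal extraction against a countable test family, then promoting the limiting functional to an element of $\ttx$ — is indeed the Coifman--Weiss-style argument the paper gestures at, and your steps 1--4 are sound (a slightly cleaner variant extracts the subsequence via weak $L^2(O_m)$-compactness on the nested bounded sets $O_m=B(x_0,m)\times(1/m,m)$, which gives a concrete $f\in L^2_{\mathrm{loc}}(\xt)$ rather than an abstract functional; note that $\|f_k\chi_{O_m}\|_{L^2(O_m,\frac{d\mu\,dt}{t})}\le C(O_m)\|f_k\|_\ttx$ because $\chi_{O_m}\ov{f_k}/\|f_k\chi_{O_m}\|_{T_2^2(\cx)}$ is an admissible element of $\txb$). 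You correctly identify that the whole weight of the lemma falls on the last step, and this is where the proposal has a genuine gap rather than a finished argument.

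Route (b) as stated does not close that gap: Lemma \ref{l4.4} is a norm \emph{formula} valid for $f$ already known to lie in $\ttx$; it is not a membership criterion. From $\sup_{g\in\txb,\ \|g\|_\txz\le1}|\int fg|\le M$ you only get that $f$ is a bounded functional on $\txv$, i.e.\ $f\in(\txv)^*$, and the statement that every such functional is atomically representable is precisely the content of Theorem \ref{t4.1} — whose proof in the paper invokes Lemma \ref{l4.5}. So route (b) is circular. (A norm-closed subspace of a dual need not be weak-* closed — think of $c_0\subset\ell^\infty$ — so one cannot argue abstractly either.) Route (a) is the right kind of argument, but ``diagonal extraction on the atomic coefficients'' needs real work: the balls $B_{k_j,i}$ underlying the atoms can shrink to a point, escape to infinity, or grow without bound, and one must show that each of these regimes contributes nothing in the limit against a fixed $g\in\txb$ (escaping or growing tents eventually miss $\supp g$; huge balls kill the $T_2^2$ normalization since $\mu(B)[\rho(\mu(B))]^2\to\fz$), while the ``bounded'' atoms converge weakly in $L^2$ after a further extraction. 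You flag this difficulty yourself, but the proof as written stops exactly there, so the central claim — that the weak limit lies in $\ttx$ and not merely in $(\txz)^*$ — remains unproved.
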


\begin{theorem}\label{t4.1}
Let $\Phi$ satisfy Assumption $(\Phi)$. Then $(\txv)^*$, the
dual space of the space $\txv$, coincides with $\ttx$ in the following
sense:

For any $g\in\ttx$, define the linear function $\ell$ by setting,
for all $f\in\txz$,
\begin{equation}\label{4.2}
\ell(f)\ev\int_\xt f(x,t)g(x,t)\dxt.
\end{equation}
Then there exists a positive constant $C$, independent of $g$, such that
$$\|\ell\|_{(\txz)^*}\le C\|g\|_\ttx.$$

Conversely, for any $\ell\in(\txz)^*$, there exists $g\in\ttx$ such that
\eqref{4.2} holds for all $f\in\txz$ and $\|g\|_\ttx\le C\|\ell\|_{(\txz)^*}$,
where $C$ is a positive constant independent of $\ell$.
\end{theorem}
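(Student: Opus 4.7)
The plan is to prove the two directions separately, with the forward direction being a direct estimate and the converse requiring a truncation/compactness argument analogous to the BMO-to-$H^1$ duality proof in \cite{cw77}.

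For the forward direction, I would take $g\in\ttx$ with atomic decomposition $g=\sum_{j}\lz_ja_j$ in $(\txz)^*$ (so that $\sum_j|\lz_j|$ is comparable to $\|g\|_\ttx$), where each $a_j$ is a $\tx$-atom supported in $\wh B_j$. For any $f\in\txv$, H\"older's inequality together with the size estimate of atoms and the definition of the $\txz$-norm gives
\begin{eqnarray*}
\lf|\int_\xt f(x,t)a_j(x,t)\dxt\r|
&&\le\|a_j\|_{T_2^2(\cx)}\lf\{\iint_{\wh B_j}|f(x,t)|^2\dxt\r\}^{1/2}\\
&&\le[\mu(B_j)]^{-1/2}[\rho(\mu(B_j))]^{-1}\cdot[\mu(B_j)]^{1/2}\rho(\mu(B_j))\|f\|_\txz=\|f\|_\txz,
\end{eqnarray*}
so that $|\ell(f)|\le\sum_j|\lz_j|\,\|f\|_\txz$, and taking infimum over admissible decompositions yields $|\ell(f)|\le C\|g\|_\ttx\|f\|_\txz$. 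The interchange of sum and integral is justified because the partial sums converge to $g$ in $(\txz)^*$ and $f\in\txz$.

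For the converse, fix $x_0\in\cx$ and, for each $k\in\nn$, let
$$O_k\ev\{(x,t)\in\xt:\,d(x,x_0)<k,\,1/k<t<k\}.$$
I would first observe that any $f\in L^2(O_k,\,d\mu(x)\,dt/t)$ (extended by zero) lies in $\txb\subset\txv$ with norm controlled by a constant $C_k$ (depending on $k$) times the $L^2$-norm, since the relevant balls $B$ with $\wh B$ intersecting $O_k$ have radius bounded below by $1/k$ and centers within a bounded distance of $x_0$, so that $\mu(B)$ and $\rho(\mu(B))$ are uniformly bounded away from $0$. Restricting the bounded functional $\ell$ to this Hilbert space and invoking the Riesz representation theorem produces $g_k\in L^2(O_k,\,d\mu(x)\,dt/t)$ with $\ell(f)=\int_\xt f(x,t)g_k(x,t)\dxt$ for every such $f$. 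The $g_k$'s are consistent by construction (uniqueness of the Riesz representative), and together define a locally integrable function $g$ on $\xt$.

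The crucial step, and the one I expect to be the main obstacle, is to show that $\{g\chi_{O_k}\}_{k=1}^\fz$ is uniformly bounded in $\ttx$ by $C\|\ell\|_{(\txv)^*}$; once this is done, Lemma \ref{l4.5} supplies a subsequence and an element $\wz g\in\ttx$ with $\|\wz g\|_\ttx\le C\|\ell\|_{(\txv)^*}$ that represents the desired pairing against all test functions in $\txb$. Since $g\chi_{O_k}\in\tx\subset\ttx$, I would invoke the duality characterization in Lemma \ref{l4.4} to write
$$\|g\chi_{O_k}\|_\ttx=\sup_{h\in\txb,\,\|h\|_\txz\le1}\lf|\int_\xt g(x,t)h(x,t)\chi_{O_k}(x,t)\dxt\r|=\sup_{h\in\txb,\,\|h\|_\txz\le1}|\ell(h\chi_{O_k})|,$$
where the second equality uses $h\chi_{O_k}\in L^2(O_k,\,d\mu\,dt/t)$ and the definition of $g_k$. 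Since $h\chi_{O_k}\in\txb\subset\txv$ and $\|h\chi_{O_k}\|_\txz\le\|h\|_\txz\le1$ trivially from Definition \ref{d4.1}, the right-hand side is bounded by $\|\ell\|_{(\txv)^*}$, establishing the uniform bound. Finally, to identify the weak-$*$ limit $\wz g$ with $g$ on all of $\xt$, and hence to upgrade \eqref{4.2} from $\txb$ to all of $\txv$, I would use Lemma \ref{l4.2} (which guarantees $\txb$ is dense in $\ttx$) and the density machinery used in the proof of that lemma, together with the continuity of $\ell$ on $\txv$, to extend the pairing by approximation and conclude.
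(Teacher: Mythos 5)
Your proposal is correct and follows essentially the same route as the paper's proof: the converse direction uses the same truncations $O_k$, Riesz representation on $L^2(O_k)$, Lemma~\ref{l4.4} to get the uniform bound $\|g\chi_{O_k}\|_\ttx\le\|\ell\|_{(\txv)^*}$, and Lemma~\ref{l4.5} for weak-$*$ compactness, followed by identification of the limit and a density argument. The only cosmetic difference is in the forward direction, where you redo the atomic H\"older estimate by hand while the paper simply cites the chain of inclusions $\ttx\subset(\txz)^*\subset(\txv)^*$ already established in Lemmas~\ref{l4.1}--\ref{l4.3}; also note your citation ``trivially from Definition~\ref{d4.1}'' for the monotonicity $\|h\chi_{O_k}\|_\txz\le\|h\|_\txz$ should point to the definition of the $\txz$ norm, not Definition~\ref{d4.1}.
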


\begin{proof}
From Lemma \ref{4.2}, we infer that $\txv\st\txz=(\ttx)^*$, which further
implies that $\ttx\st(\ttx)^*\st(\txv)^*$.

Conversely, let $\ell\in(\txv)^*$. Notice that for any $f\in\txb$,
without loss of generality,
we may assume that $\supp f\st K$, where $K$ is a bounded set in $\xt$.
Then we have
$\|f\|_\txv=\|f\|_\txz\le C(K)\|f\|_\txb.$
Thus, $\ell$ induces a bounded linear functional on $\txb$. Let $O_k$ be
as in the proof of Lemma \ref{l4.4}.
By the Riesz representation theorem, there exists
a unique $g_k\in L^2(O_k)$ such that for all $f\in L^2(O_k)$,
$$\ell(f)=\int_\xt f(x,t)g_k(x,t)\dxt.$$
Obviously, $g_{k+1}O_k=g_k$ for all $k\in\nn$. Let $g\ev g_1\chi_{O_1}
+\sum_{k=2}^\fz g_k\chi_{O_k\bh O_{k-1}}$. Then $g\in L_{\rm loc}^2(\xt)$
and for any $f\in\txb$, we have
$$\ell(f)=\int_\xt f(y,t)g(y,t)\dyt.$$

Set $\wz g_k\ev g\chi_{O_k}$. Then for each $k\in\nn$, obviously, we
see that $\wz g_k\in\txb\st\tx\st\ttx$. Then from Lemma \ref{l4.4},
it follows that
\begin{eqnarray*}
\|\wz g_k\|_\ttx&&=\sup_{f\in\txb,\,\|f\|_\txz\le1}
\lf|\int_\xt f(x,t)g(x,t)\chi_{O_k}(x,t)\dxt\r|\\
&&=\sup_{f\in\txb,\,\|f\|_\txz\le1}\lf|\ell(f\chi_{O_k})\r|\\
&&\le\sup_{f\in\txb,\,\|f\|_\txz\le1}\|\ell\|_{(\txv)^*}\|f\|_\txz
\le \|\ell\|_{(\txv)^*}.
\end{eqnarray*}
Thus, by Lemma \ref{l4.5}, there exist $\wz g\in\ttx$ and
$\{\wz g_{k_j}\}_{j=1}^\fz\st\{\wz g_k\}_{k=1}^\fz$ such that for all $f\in\txb$,
$$\lim_{j\to\fz}\int_\xt f(x,t)\wz g_{k_j}(x,t)\dxt=
\int_\xt f(x,t)\wz{g}(x,t)\dxt.$$
On the other hand, notice that for sufficient large $k_j$, we have
\begin{eqnarray*}
\ell(f)&&=\int_\xt f(x,t)g(x,t)\dxt\\
&&=\int_\xt f(x,t)\wz g_{k_j}(x,t)\dxt=\int_\xt f(x,t)\wz{g}(x,t)\dxt,
\end{eqnarray*}
which implies that $g=\wz g$ almost everywhere, and hence $g\in\ttx$.
By a density argument, we conclude that \eqref{4.2} also holds for
$g$ and all $f\in\txz$, which completes the proof of Theorem \ref{t4.1}.
\end{proof}

\begin{definition}\rm\label{d4.2}
Let $L$ satisfy Assumptions $(L)_1$ and $(L)_2$, $\Phi$ satisfy
Assumption $(\Phi)$, $M\in\nn$, $M>\frac n2(\frac1{p_\Phi^-}-\frac12)$
and $\ez\in(n(1/p_\Phi^-- 1/p_\Phi^+),\fz)$. An element
$f\in(\bmox)^*$ is said to be in the \emph{space} $\hmx$ if there exist
$\{\lz_j\}_{j=1}^\fz\st\cc$ and $(\Phi,\,M,\,\ez)_L$-molecules
$\{\az_j\}_{j=1}^\fz$ such that $f=\sum_{j=1}^\fz\lz_j\az_j$ in
$(\bmox)^*$ and
$$\blz(\{\lz_j\az_j\}_{j=1}^\fz)\ev\inf\lf\{\lz>0:\, \sum_{j=1}^\fz\mu(B_j)
\Phi\lf(\frac{|\lz_j|}{\lz \mu(B_j)\rho(\mu(B_j))}\r)\le1\r\}<\fz,$$
where for each $j$, $\az_j$ is adapted to the ball $B_j$.

If $f\in\hmx$, then its \emph{norm} is defined by $\|f\|_\hmx\ev\inf\{\blz
(\{\lz_j\az_j\}_{j=1}^\fz)\}$, where the infimum is taken over all the
possible decompositions of $f$ as above.
\end{definition}

By \cite[Theorem 5.1]{jy}, we see that for all $M>\mz$ and
$\ez\in(n(1/p_\Phi^-- 1/p_\Phi^+),\fz)$, the \emph{spaces $\hx$ and $\hmx$
coincide with equivalent norms}.

Let us introduce the Banach completion of the space $\hx$.

\begin{definition}\rm\label{d4.3}
Let $L$ satisfy Assumptions $(L)_1$ and $(L)_2$, $\Phi$ satisfy
Assumption $(\Phi)$, $\ez\in(n(1/p_\Phi^--1/p_\Phi^+),\fz)$ and
$M>\mz$. The \emph{space} $\bmx$ is defined to be the space of all
$f=\sum_{j=1}^\fz\lz_j\az_j$ in $(\bmox)^*$, where
$\{\lz_j\}_{j=1}^\fz\subset\cc$ with $\sum_{j=1}^\fz|\lz_j|<\fz$
and $\{\az_j\}_{j=1}^\fz$ are
$(\Phi,\,M,\,\ez)_L$-molecules. If $f\in\bmx$, define
$\|f\|_\bmx\ev\inf\{\sum_{j=1}^\fz|\lz_j|\}$, where the infimum is
taken over all the possible decomposition of $f$ as above.
\end{definition}

By \cite[Lemma 3.1]{hm09}, we know that $\bmx$ is a Banach space.
Moreover, from Definition \ref{d4.2}, it is easy to deduce that
$\hx$ is dense in $\bmx$. More precisely, we have the following
lemma.

\begin{lemma}\label{l4.6}
Let $L$ satisfy Assumptions $(L)_1$ and $(L)_2$, $\Phi$ satisfy
Assumption $(\Phi)$, $\ez\in(n(1/p_\Phi^--1/p_\Phi^+),\fz)$ and
$M>\mz$. Then

{\rm i)} $\hx\st\bmx$ and the inclusion is continuous.

{\rm ii)} For any $\ez_1\in(n(1/p_\Phi^--1/p_\Phi^+),\fz)$ and $M_1>\frac
n2(\frac1{p_\Phi^-}-\frac12)$, the spaces $\bmx$ and
$B_{\Phi,L}^{M_1,\ez_1}(\cx)$ coincide with equivalent norms.
\end{lemma}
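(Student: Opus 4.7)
The plan for part (i) is to factor $f\in\hx$ through the tent space and exploit the continuous embedding $\tx\hookrightarrow\ttx$ established in Lemma \ref{l4.1}. Concretely, I would introduce the lift
$$\pi_{L,M}(F)(x)\ev C(M)\int_0^\fz(t^2L)^Me^{-t^2L}(F(\cdot,t))(x)\dt,$$
and verify, using the Davies-Gaffney and square-function estimates of Lemmas \ref{l2.2} and \ref{l2.3} (the same type of computation that already appears in the proof of Theorem \ref{t3.1}), that $\pi_{L,M}$ sends every $\tx$-atom $A$ supported in $\wh B$ to a fixed constant multiple of a $(\Phi,M,\ez)_L$-molecule adapted to $B$, with the multiplicative constant depending only on $M$, $\ez$ and $n$. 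A term-by-term application of this to the $\ell^1$-atomic decomposition defining $\ttx$ then upgrades the operator to a bounded map $\pi_{L,M}\colon\ttx\to\bmx$. For $f\in\hx$, setting $F_f(x,t)\ev t^2Le^{-t^2L}f(x)$ gives $\ca(F_f)=\cs_Lf$ and hence $\|F_f\|_\tx=\|f\|_\hx$, while the Calder\'on reproducing formula $f=C(M)\int_0^\fz(t^2L)^{M+1}e^{-2t^2L}f\dt$ reorganises as $f=\pi_{L,M}(F_f)$. Chaining these bounds via Lemma \ref{l4.1} yields $\|f\|_\bmx\ls\|F_f\|_\ttx\ls\|F_f\|_\tx=\|f\|_\hx$.

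For part (ii), I would reduce the independence to (i) applied molecule by molecule. Every $(\Phi,M,\ez)_L$-molecule $\az$ belongs to $\hx$ with $\|\az\|_\hx\ls 1$ uniformly (a standard consequence of the molecular theory, cf.\ \cite[Corollary 3.8]{al11}). Applying (i) with the target parameters $(M_1,\ez_1)$ therefore produces, for each such $\az$, a decomposition $\az=\sum_i\mu_{\az,i}m_{\az,i}$ as an $\ell^1$-combination of $(\Phi,M_1,\ez_1)_L$-molecules with $\sum_i|\mu_{\az,i}|\ls 1$, the implicit constant being independent of $\az$. For a general $f=\sum_j\lz_j\az_j\in\bmx$ with $\sum_j|\lz_j|\le 2\|f\|_\bmx$, substituting these rewritings into the decomposition of $f$ and interchanging the absolutely convergent double sum inside $(\bmox)^*$ exhibits $f$ as an $\ell^1$-combination of $(\Phi,M_1,\ez_1)_L$-molecules with coefficient sum $\ls\|f\|_\bmx$, i.e.\ $f\in B_{\Phi,L}^{M_1,\ez_1}(\cx)$ continuously. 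Swapping the roles of $(M,\ez)$ and $(M_1,\ez_1)$ gives the reverse inclusion.

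The main obstacle is the first step of (i): showing that $\pi_{L,M}$ sends each tent atom to a multiple of a molecule with a \emph{uniform} constant. The argument follows the pattern of the estimate for $f_2$ in the proof of Theorem \ref{t3.1}, splitting each annulus $U_j(B)$ according to its relative position with respect to the support ball of the atom and invoking Lemma \ref{l2.2}; the restriction of the $t$-integration to $(0,r_B)$ coming from $\supp A\subset\wh B$ actually simplifies the bookkeeping. A secondary difficulty is that the Calder\'on identity $f=\pi_{L,M}(F_f)$ must be interpreted in $(\bmox)^*$, not merely in $L^2(\cx)$, so that the resulting molecular decomposition is admissible under Definition \ref{d4.3}; this is handled by first establishing the identity on the dense subspace $\hmfl\subset\hx$ and then passing to the limit.
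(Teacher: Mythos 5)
Your proof is correct and reaches the stated conclusion, but it is organized rather differently from the paper's. For part (i) the paper's one-line proof simply combines two already-available ingredients: the molecular characterization $\hx=\hmx$ (quoted just after Definition \ref{d4.2} from \cite[Theorem 5.1]{jy}), which gives $f=\sum_j\lambda_j\alpha_j$ with $\blz(\{\lambda_j\alpha_j\}_j)\lesssim\|f\|_\hx$, together with the concavity/upper-type-$1$ argument already displayed in the proof of Lemma \ref{l4.1}, which converts $\blz$-control into the $\ell^1$-control $\sum_j|\lambda_j|\lesssim\|f\|_\hx$ required by Definition \ref{d4.3}. Your plan instead re-derives that characterization from scratch: lift $f$ to $F_f=t^2Le^{-t^2L}f\in\tx$, pass to $\ttx$ via Lemma \ref{l4.1}, and push forward by $\plm$ using the atom-to-molecule estimate. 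That is a legitimate and self-contained alternative which has the merit of making the tent-space mechanism visible, but it duplicates work the paper has delegated to \cite{jy}; moreover it anticipates Proposition \ref{p4.2}(iii), which in the paper is deduced \emph{from} Lemma \ref{l4.6}, so one must be careful not to quote that proposition here as a black box — your plan avoids circularity only because you prove the atom-to-molecule bound directly, and that caveat deserves to be made explicit. For part (ii) your argument coincides with the paper's: decompose $f$ into $(\Phi,M,\ez)_L$-molecules, bound each in the $(M_1,\ez_1)$-space by $\lesssim 1$ using (i), and resum; the paper expresses the last step through completeness of $B_{\Phi,L}^{M_1,\ez_1}(\cx)$ rather than as an explicit interchange of an absolutely convergent double sum, but these are the same. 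One small slip: the uniform bound $\|\alpha\|_\hx\lesssim 1$ for $(\Phi,M,\ez)_L$-molecules follows from the molecular characterization (equivalence of $\hx$ and $\hmx$), not from \cite[Corollary 3.8]{al11}, which is a density statement.
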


\begin{proof}
From Definition \ref{d4.3} and the molecular characterization of $\hx$,
it is easy to deduce i).

Let us prove ii). By symmetry, it suffices to
show that $\bmx\st\byx$.
Let $f\in\bmx$. By Definition \ref{d4.3}, there exist $\pme$-molecules
$\{\az_j\}_{j=1}^\fz$ and $\{\lz_j\}_{j=1}^\fz\st\cc$ such that
$f=\sum_{j=1}^\fz\lz_j\az_j$ in $\bmoxx$ and $\sum_{j=1}^\fz|\lz_j|\ls
\|f\|_\bmx$.
By i), for each $j\in\nn$, we see that $\az_j\in\hx\st\byx$ and
$\|\az_j\|_\byx\ls\|\az_j\|_\hx\ls1$. Since $\byx$ is a Banach space, we
see that $f\in\byx$ and $\|f\|_\byx\le\sum_{j=1}^\fz|\lz_j|\|\az_j\|
_\byx\ls\|f\|_\bmx$. Thus, $\bmx\st\byx$, which completes the proof of
Lemma \ref{l4.6}.
\end{proof}

Since the spaces $\bmx$ coincide for all
$\ez\in(n(1/p_\Phi^--1/p_\Phi^+), \fz)$ and $M>\frac
n2(\frac1{p_\Phi^-}-\frac12)$, in what follows, we denote $\bmx$ simple
by $\bx$.

\begin{lemma}\label{l4.7}
Let $L$ satisfy Assumptions $(L)_1$ and $(L)_2$, and $\Phi$ satisfy
Assumption $(\Phi)$. Then $(\bx)^*=\bmox$.
\end{lemma}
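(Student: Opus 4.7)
The plan is to prove $(\bx)^*=\bmox$ by establishing two continuous inclusions, exploiting the ``symmetric'' analogue of Theorem \ref{t3.2} (obtained simply by interchanging the roles of $L$ and $L^*$), which yields $(\hx)^*=\bmox$, together with the structural description of $\bx$ in Definition \ref{d4.3} and the continuous embedding $\hx\hookrightarrow\bx$ from Lemma \ref{l4.6}(i).

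For the inclusion $\bmox\hookrightarrow(\bx)^*$, I would fix $f\in\bmox$ and, for each $g=\sum_{j=1}^\fz\lz_j\az_j\in\bx$ as in Definition \ref{d4.3}, set $L_f(g)\ev\sum_{j=1}^\fz\lz_j\la\az_j,f\ra$, where each pairing $\la\az_j,f\ra$ is the one furnished by the symmetric form of Theorem \ref{t3.2}. Since any $\pme$-molecule $\az_j$ satisfies $\|\az_j\|_\hx\ls1$, that theorem gives $|\la\az_j,f\ra|\le C\|f\|_\bmox$, so the series defining $L_f(g)$ converges absolutely and $|L_f(g)|\le C\|f\|_\bmox\sum_j|\lz_j|$. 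Taking the infimum over all admissible representations of $g$ then yields $|L_f(g)|\le C\|f\|_\bmox\|g\|_\bx$. Independence of $L_f(g)$ from the representation follows because the series $\sum_j\lz_j\az_j$ converges to $g$ in $\bmoxx=(\bmox)^*$, so evaluation at $f\in\bmox$ gives the intrinsic value $L_f(g)=g(f)$; this also confirms linearity.

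For the reverse inclusion, given $\ell\in(\bx)^*$, I would use Lemma \ref{l4.6}(i) to restrict $\ell$ to $\hx$, obtaining $\ell|_\hx\in(\hx)^*$ with $\|\ell|_\hx\|_{(\hx)^*}\ls\|\ell\|_{(\bx)^*}$. Applying Theorem \ref{t3.2} with $L$ and $L^*$ interchanged produces $f\in\bmox$ such that $\ell(\az)=\la f,\az\ra$ on every finite linear combination of molecules, with $\|f\|_\bmox\ls\|\ell\|_{(\bx)^*}$. To identify $\ell$ with $L_f$ globally, take any $g=\sum_{j=1}^\fz\lz_j\az_j\in\bx$; the partial sums $S_N\ev\sum_{j\le N}\lz_j\az_j$ satisfy $\|g-S_N\|_\bx\le\sum_{j>N}|\lz_j|\to0$, so by continuity of $\ell$ on $\bx$,
$$\ell(g)=\lim_{N\to\fz}\ell(S_N)=\sum_{j=1}^\fz\lz_j\ell(\az_j)=\sum_{j=1}^\fz\lz_j\la f,\az_j\ra=L_f(g),$$
which completes the duality identification.

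The main obstacle is really a conceptual one: ensuring that the pairing in Theorem \ref{t3.2} (which is defined by extension from the $L^2$ duality on finite molecular sums) is consistent with the natural $\bmoxx$--$\bmox$ duality used in Definition \ref{d4.3}. Since finite molecular combinations are dense in $\hx$ and the series defining elements of $\bx$ converge in $(\bmox)^*$, both pairings agree on the dense core and extend uniquely, so the two descriptions of $\la\az_j,f\ra$ coincide. Once this compatibility is in place, the proof is essentially an abstract soft-analysis argument using Lemmas \ref{l4.6} and the duality $(\hx)^*=\bmox$, and no new hard harmonic-analytic estimates are needed.
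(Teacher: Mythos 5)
Your proposal is correct and follows essentially the same two-direction argument as the paper: the forward inclusion $(\bx)^*\subset\bmox$ via restriction through the dense continuous embedding $\hx\hookrightarrow\bx$ and $(\hx)^*=\bmox$, and the reverse inclusion by pairing $f\in\bmox$ term-by-term against a molecular decomposition $g=\sum_j\lz_j\az_j$, using $\|\az_j\|_\hx\ls1$ and taking the infimum over representations. The paper is more terse (especially on the forward direction, which it compresses into a single sentence), but the underlying reasoning is identical; your added remarks on well-definedness of the pairing and convergence of $S_N\to g$ in $\bx$ are just explicit spellings-out of what the paper leaves implicit.
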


\begin{proof}
Since $(\hx)^*=\bmox$ and $\hx\st\bx$, by duality, we conclude that
$(\bx)^*\st\bmox$.

Conversely, let $\ez\in(n(1/p_\Phi^--1/p_\Phi^+),\fz)$, $M>\mz$ and
$f\in\bmox$. For any $g\in\bx$, by Definition \ref{d4.3}, there exist
$\pme$-molecules $\{\az_j\}_{j=1}^\fz$ and $\{\lz_j\}_{j=1}^\fz\st\cc$ such that
$g=\sum_{j=1}^\fz\lz_j\az_j$ in $\bmoxx$ and $\sum_{j=1}^\fz|\lz_j|\ls
\|g\|_\bx$. Thus,
\begin{eqnarray*}
|\la f,g\ra|&&\le\sum_{j=1}^\fz|\lz_j||\la f,\az_j\ra|
\ls\sum_{j=1}^\fz|\lz_j|\|f\|_\bmox\|\az_j\|_\hx\\
&&\ls\|f\|_\bmox\|g\|_\bx,
\end{eqnarray*}
which implies that $f\in(\bx)^*$, and hence completes the proof of
Lemma \ref{l4.7}.
\end{proof}

Let $M\in\nn$. For all $F\in L^2(\xt)$ with bounded support, define
\begin{equation}\label{4.3}
\plm F\ev C(M)\int_0^\fz\tml F(\cdot,t)\dt,
\end{equation}
where $C(M)$ is as in \eqref{3.5}.

\begin{proposition}\label{p4.2}
Let $L$ satisfy Assumptions $(L)_1$ and $(L)_2$, $\Phi$ satisfy
Assumption $(\Phi)$ and $M\in\nn$. Then the operator $\plm$,
initially defined on $\txb$, extends to a bounded linear operator

{\rm i)} from $T_2^2(\cx)$ to $L^2(\cx)$;

{\rm ii)} from $\tx$ to $\hx$, if $M>\frac n2(\frac1{p_\Phi^-}-\frac12)$;

{\rm iii)} from $\ttx$ to $\bx$, if $M>\frac n2(\frac1{p_\Phi^-}-\frac12)$;

{\rm iv)} from $\txv$ to $\vmo$.
\end{proposition}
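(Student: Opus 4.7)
The plan is to handle the four parts in order, with (i)--(iii) following a standard duality and atomic-decomposition scheme while (iv) is the substantive new step.

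For (i), I take $F\in\txb$ and $g\in L^2(\cx)$ and move the adjoint onto $g$:
\[
\la\plm F,g\ra=C(M)\int_0^\fz\!\int_\cx F(x,t)\ov{(t^2L^*)^Me^{-t^2L^*}g(x)}\dxt.
\]
Cauchy--Schwarz in the $(x,t)$-integral combined with the $L^2$-boundedness of $S_{L^*}^M$ from Lemma \ref{l2.3} (applied to $L^*$ via Remark \ref{r2.1}) yields $|\la\plm F,g\ra|\ls\|F\|_{T_2^2(\cx)}\|g\|_{L^2(\cx)}$, and density of $\txb$ in $T_2^2(\cx)$ finishes (i). For (ii), apply Lemma \ref{l3.1} to write $F=\sum_j\lz_j a_j$ with $\tx$-atoms $a_j$ supported in $\wh B_j$; set $b_j\ev C(M)\int_0^\fz t^{2M}e^{-t^2L}a_j(\cdot,t)\dt$ so that $\plm a_j=L^M b_j$. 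The task is then to check that each $\plm a_j$ is a constant multiple of a $\pme$-molecule adapted to $B_j$; the required estimates on $\|(r_{B_j}^2L)^k b_j\|_{L^2(U_l(B_j))}$ split according to whether $l$ is close to $0$ or large, and both cases are handled by the Davies--Gaffney bound of Lemma \ref{l2.2} for $(t^2L)^{k+M}e^{-t^2L}$ together with the support and $L^2$-size of $a_j$. The molecular characterization of $\hx$ via $\hmx$ then gives $\|\plm F\|_\hx\ls\|F\|_\tx$. For (iii), any $F\in\ttx$ admits an $\ell^1$-atomic decomposition by Definition \ref{d4.1}, and (ii) converts it into a series of $\pme$-molecules that represents $\plm F$ in $\bx$ by Definition \ref{d4.3}, yielding $\|\plm F\|_\bx\ls\|F\|_\ttx$.

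For (iv), use the characterization in Theorem \ref{t3.4}: it suffices to show $\plm F\in\bmo$ with $\|\plm F\|_\bmo\ls\|F\|_\txz$ and that $(t^2L)^{M_1}e^{-t^2L}\plm F\in\txv$ for some $M_1\ge M$ with $M_1>\mz$. The BMO bound follows from the $\rho$-Carleson criterion of Theorem \ref{t3.3}: for each ball $B$, decompose $F=F\chi_{\wh{4B}}+F\chi_{(\wh{4B})^\com}$, estimate the local piece by (i), and estimate the far piece by the Davies--Gaffney decay of Lemma \ref{l2.2}. For the second assertion, write
\[
(t^2L)^{M_1}e^{-t^2L}\plm F(\cdot)=C(M)\int_0^\fz(t^2L)^{M_1}e^{-t^2L}(s^2L)^Me^{-s^2L}F(\cdot,s)\ds
\]
and use the functional-calculus identity
\[
(t^2L)^{M_1}e^{-t^2L}(s^2L)^Me^{-s^2L}=\lf(\tfrac{t^2}{t^2+s^2}\r)^{M_1}\lf(\tfrac{s^2}{t^2+s^2}\r)^M\lf[(t^2+s^2)L\r]^{M_1+M}e^{-(t^2+s^2)L}
\]
to extract off-diagonal decay in the $s$-variable, combined with Lemma \ref{l2.2} applied to $[(t^2+s^2)L]^{M_1+M}e^{-(t^2+s^2)L}$.

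The main obstacle will be the vanishing-condition transfer in (iv). Although the Carleson-measure bound is a standard Schur estimate, transferring each of $\eta_1$, $\eta_2$, $\eta_3$ across the $s$-integral requires a delicate scale-by-scale split of $s$ versus the test radius $r_B$, so that each piece is dominated by one of the three suprema in Definition \ref{d3.4} and can be sent to zero via dominated convergence in the limits $r_B\to 0$, $r_B\to\fz$, and $B\subset[B(x_0,c)]^\com$ with $c\to\fz$. Once this scale-by-scale bookkeeping is set up, the three vanishing conditions of $F$ propagate to $(t^2L)^{M_1}e^{-t^2L}\plm F$, which by Theorem \ref{t3.4} places $\plm F$ in $\vmo$ with the desired norm control.
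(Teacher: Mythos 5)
Your outline for parts (i)--(iii) matches what the paper does: (i) by duality against $S_{L^*}^M$, (ii) by sending $\tx$-atoms to $\pme$-molecules (the paper simply cites \cite[Proposition 3.6]{al11}), and (iii) by pushing the $\ell^1$-atomic decomposition of $\ttx$ through (ii) and a duality argument against $\bmox$. No issues there.

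The gap is in (iv), and it lies in the choice of decomposition. The ``scale-by-scale split of $s$ versus $r_B$'' that you announce is not by itself what transfers the vanishing conditions. The mechanism in the paper is a \emph{spatial dyadic-annulus decomposition of $F$ in the tent space}: writing $V_0(B)=\wh B$, $V_k(B)=\wh{2^kB}\setminus\wh{2^{k-1}B}$ for $k\geq 1$, and $f_k\ev F\chi_{V_k(B)}$, one bounds the $\wh B$-average of $\tmy\plm f_k$ by a constant times $\|f_k\|_{T_2^2(\cx)}$ for $k\leq 2$, and by $2^{-2kM_1}\|f_k\|_{T_2^2(\cx)}$ for $k\geq 3$ (this is where your $s$-versus-$r_B$ comparison enters, as a secondary split of $V_k(B)$ into the tent-like region $V_{k,1}$ and the high-$t$ region $V_{k,2}$ where $t\sim 2^kr_B$). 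The net result is
$$\frac1\rb\lf[\iint_{\wh B}\lf|\tmy\plm F(x)\r|^2\dxt\r]^{1/2}\ls\sum_{k=0}^\fz 2^{-2k[M_1-\mzx]}\frac{\|f_k\|_{T_2^2(\cx)}}\rkb,$$
and now the geometric factor makes dominated convergence for series applicable, while for fixed $k$ the term $\rho(\mu(2^kB))^{-1}[\mu(2^kB)]^{-1/2}\|f_k\|_{T_2^2(\cx)}$ is dominated by the $\txv$-type quantity for $F$ over $\wh{2^kB}$, so each of the three limits $\eta_1,\eta_2,\eta_3$ vanishes. Your proposed coarse split $F=F\chi_{\wh{4B}}+F\chi_{(\wh{4B})^\com}$ would give the Carleson bound but not the vanishing: the far piece does not decompose into terms whose limits can be controlled by the vanishing of $F$, because you have no geometric ladder of scales in $k$ to feed into dominated convergence. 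You need the full dyadic family.

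A secondary point: showing $\plm F\in\bmo$ separately via Theorem \ref{t3.3} is redundant and slightly longer than needed. By Theorem \ref{t3.4}, once $\tmy\plm F\in\txv$ the BMO-norm estimate comes for free from $\|\tmy\plm F\|_\txz$; the only membership you still need is $\plm F\in\cm_{\Phi,L}^{M_1}(\cx)$, and for $F\in\txb$ this is immediate from $\plm F\in L^2(\cx)\subset\cm_{\Phi,L}^{M_1}(\cx)$, which is the route the paper takes. (Your Theorem \ref{t3.3} route would also require verifying \eqref{3.6}, which is automatic in $L^2$ but an unnecessary detour.)
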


\begin{proof}
i) and ii) were established in \cite[Proposition 3.6]{al11} (see also
\cite[Lemma 3.1]{jy}).

By Lemma \ref{l4.2}, we know that $\txb$ is dense in $\ttx$. Let $f\in\txb$.
From ii) and Lemma \ref{l4.6}, we deduce that $\plm f\in\hx\st\bx$.
Moreover, by Definition \ref{d4.1}, there exist $\tx$-atoms
$\{a_j\}_{j=1}^\fz$ and $\{\lz_j\}_{j=1}^\fz\st\cc$ such that
$f=\sum_{j=1}^\fz\lz_ja_j$ in $(\txz)^*$ and $\sum_j|\lz_j|\ls
\|f\|_\ttx$. In addition, for any $g\in\bmox$, we have
$\tmlx g\in\txz$. Thus, by $(\tx)^*=\txz$, we conclude that
\begin{eqnarray*}
\la\plm(f),g\ra&&=C(M)\int_\xt f(x,t) \ov{\tmlx g(x)}\dxt\\
&&=\sum_{j=1}^\fz\lz_jC(M)\int_\xt a_j(x,t) \ov{\tmlx g(x)}\dxt\\
&&=\sum_{j=1}^\fz\lz_j\la\plm(a_j),g\ra,
\end{eqnarray*}
which implies that $\plm(f)=\sum_{j=1}^\fz\lz_j\plm(a_j)$ in $(\bmox)^*$.
By ii), we further conclude that
\begin{eqnarray*}
\|\plm(f)\|_\bx&&\le\sum_{j=1}^\fz|\lz_j|\|\plm(a_j)\|_\bx\\
&&\ls\sum_{j=1}^\fz|\lz_j|\|\plm(a_j)\|_\hx\ls\|f\|_\ttx.
\end{eqnarray*}
Since $\txb$ is dense in $\ttx$, we see that $\plm$ extends to a
bounded linear operator from $\ttx$ to $\bx$, which completes the
proof of iii).

Let us now prove iv). From Lemma \ref{l3.3}, we infer that $\txb$ is
dense in $\txv$. Thus, to prove iv), it suffices to show that $\plm$
maps $\txb$ continuously into $\vmo$.

Let $f\in\txb$. By i), we see that $\plm f\in L^2(\cx)$. Notice that
\eqref{3.3} and \eqref{3.4} with $L$ and $L^*$ exchanged implies
that $L^2(\cx)\st\mly$, when $M_1\in\nn$ and $M_1>\mz$. Thus, $\plm f
\in\mly$. To show $\plm f\in\vmo$, by Theorem \ref{t3.4}, we still need
show that $\tmy\plm f\in\txv$.

For any ball $B\ev B(x_B,r_B)$, let $V_0(B)\ev\wh B$ and $V_k(B)\ev
(\wh{2^kB})\bh(\wh{2^{k-1}B})$ for any $k\in\nn$. For all $k\in\zz_+$,
let $f_k\ev f\chi_{V_k(B)}$. Thus, for $k\in\{0,1,2\}$, by Lemma \ref{l2.2}
and i), we see that
\begin{eqnarray*}
\lf[\iint_{\wh B}\lf|\tmy\plm f_k(x)\r|^2\dxt\r]^{1/2}\ls\|\plm f_k\|_{L^2(\cx)}
\ls\|f_k\|_{T_2^2(\cx)}.
\end{eqnarray*}
For $k\ge3$, let $\vy\ev(\wh{2^kB})\bh(2^{k-2}B\times(0,\fz))$ and
$\ve\ev V_k(B)\bh\vy$. We further write
$f_k=f_k\chi_\vy+f_k\chi_\ve\ev f_{k,1}+f_{k,2}$. From Minkowski's
inequality, Lemma \ref{l2.3} and H\"older's inequality, we deduce that
\begin{eqnarray*}
&&\lf[\iint_{\wh B}\lf|\tmy\plm f_{k,2}(x)\r|^2\dxt\r]^{1/2}\\
&&\hs\sim\lf[\iint_{\wh B}\lf|\int_{2^{k-2}r_B}^{2^kr_B}\tmy
(s^2L)^Me^{-s^2L}(f_{k,2}(\cdot,s))(x)\ds\r|^2\dxt\r]^{1/2}\\
&&\hs\ls\int_{2^{k-2}r_B}^{2^kr_B}\lf[\iint_{\wh B}\lf|t^{2M_1}
s^{2M}L^{M+M_1}e^{-(s^2+t^2)L}(f_{k,2}(\cdot,s))(x)\r|^2\dxt\r]^{1/2}\ds\\
&&\hs\ls\int_{2^{k-2}r_B}^{2^kr_B}\lf[\int_0^{r_B}\lf|\frac{t^{2M_1}
s^{2M}}{(s^2+t^2)^{M+M_1}}\r|^2\|f_{k,2}(\cdot,s)\|_{L^2(\cx)}^2\dt\r]^{1/2}\ds\\
&&\hs\ls2^{-2kM_1}\int_{2^{k-2}r_B}^{2^kr_B}\|f_{k,2}(\cdot,s)\|_{L^2(\cx)}\ds
\ls2^{-2kM_1}\|f_{k,2}\|_{T_2^2(\cx)}.
\end{eqnarray*}
Similarly, we have
$$\lf[\iint_{\wh B}\lf|\tmy\plm f_{k,1}(x)\r|^2\dxt\r]^{1/2}
\ls2^{-2kM_1}\|f_{k,1}\|_{T_2^2(\cx)}.$$

Let $\wz p_\Phi\in (0,p_\Phi^-)$
such that $M>\mzx$ and $M_1>\mzx$.
Combining the above estimates, since $\Phi$ is of lower type $\wz p_\Phi$,
we finally conclude that
\begin{eqnarray*}
&&\frac1\rb\lf[\iint_{\wh B}\lf|\tmy\plm f(x)\r|^2\dxt\r]^{1/2}\\
&&\hs\ls
\sum_{k=0}^2\frac1\rb\lf[\iint_{\wh B}\lf|\tmy\plm f_k(x)\r|^2\dxt\r]^{1/2}\\
&&\hs\hs+\sum_{k=3}^\fz\sum_{i=1}^2\frac1\rb\lf[\iint_{\wh B}
\lf|\tmy\plm f_{k,i}(x)\r|^2\dxt\r]^{1/2}\\
&&\hs\ls\sum_{k=0}^2\frac1\rb\|f_k\|_{T_2^2(\cx)}
+\sum_{k=3}^\fz\sum_{i=1}^2\frac{2^{-2kM_1}}\rb\|f_{k,i}\|_{T_2^2(\cx)}\\
&&\hs\ls\sum_{k=0}^\fz2^{-2k[M_1-\mzx]}\frac1\rkb\|f_k\|_{T_2^2(\cx)}.
\end{eqnarray*}
Since $f\in\txv\st\txz$, we have
$$\frac1\rkb\|f_k\|_{T_2^2(\cx)}\ls\|f\|_\txz$$
and, for all fixed $k\in\nn$,
\begin{eqnarray*}
\lim_{c\to0}\sup_{B:\,r_B\le c}\frac{\|f_k\|_{T_2^2(\cx)}}\rkb&&
=\lim_{c\to\fz}\sup_{B:\,r_B\ge c}\frac{\|f_k\|_{T_2^2(\cx)}}\rkb\\
&&=\lim_{c\to\fz}\sup_{B:\,B\st [B(0,c)]^\com}\frac{\|f_k\|_{T_2^2(\cx)}}\rkb=0.
\end{eqnarray*}
Thus, by the dominated convergence theorem for series, we further conclude that
\begin{eqnarray*}
&&\eta_1(\tmy\plm f)\\
&&\hs=\lim_{c\to0}\sup_{B:\,r_B\le c}\frac1\rb
\lf[\iint_{\wh B}\lf|\tmy\plm f(x)\r|^2\dxt\r]^{1/2}\\
&&\hs\ls\sum_{k=0}^\fz2^{-2k[M_1-\mzx]}
\lim_{c\to0}\sup_{B:\,r_B\le c}\frac{\|f_k\|_{T_2^2(\cx)}}\rkb=0.
\end{eqnarray*}

Similarly, we have $\eta_2(\tmy\plm f)=\eta_3(\tmy\plm f)=0$, and
hence $\tmy\plm f\in\txv$, which completes the proof of Proposition \ref{p4.2}.
\end{proof}

\begin{lemma}\label{l4.8}
Let $L$ satisfy Assumptions $(L)_1$ and $(L)_2$, and $\Phi$ satisfy
Assumption $(\Phi)$. Then $\vmo\cap L^2(\cx)$ is dense in $\vmo$.
\end{lemma}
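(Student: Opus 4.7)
The plan is to invoke the tent-space characterization of Theorem \ref{t3.4}, truncate the tent-space representative of $f$ so that it has bounded support in $\xt$, and then reconstruct via the operator $\plm$ of \eqref{4.3}; this converts the abstract density problem into a density statement inside the tent space, where it follows from Lemma \ref{l3.3}.

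Fix $M\in\nn$ with $M>\mz$. Given $f\in\vmo$, set $F(x,t)\ev(t^2L)^Me^{-t^2L}f(x)$ for all $(x,t)\in\xt$. By Theorem \ref{t3.4}, $F\in\txv$ with $\|F\|_\txz\sim\|f\|_\bmo$. A direct $H_\fz$-functional-calculus computation, using only the normalization \eqref{3.5}, produces a positive constant $c_M$, depending only on $M$, such that
$$\plm F=C(M)\int_0^\fz(t^2L)^{2M}e^{-2t^2L}f\dt=c_Mf\quad\text{in}\ L^2(\cx),$$
since the scalar $\int_0^\fz(t^2z)^{2M}e^{-2t^2z}\dt$ is independent of $z$. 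One next extends this reconstruction identity to $f\in\vmo$: both sides lie in $\vmo\st(\hxx)^*$ (the right-hand side by assumption and the left-hand side by Proposition \ref{p4.2}(iv)), so the identity reduces to showing that they produce the same pairing with every $\pmx$-molecule $\az$. Writing out the pairing via the defining duality of $\plm$ and using Fubini to transfer the factor $(t^2L^*)^{2M-1}e^{-2t^2L^*}$ onto $\az$ converts the double integral into $c_M\la f,\az\ra$, because that $t$-integral against $\az$ collapses to a constant multiple of $\az$ by the $H_\fz$-calculus; Fubini is legitimate because of the molecular decay of $\az$ (Definition \ref{d3.1}) combined with the off-diagonal bounds of Lemma \ref{l2.2}.

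By Lemma \ref{l3.3}, $\txv$ is the closure of $\txb$ in the $\txz$-norm, so there exists $\{F_k\}_{k=1}^\fz\st\txb$ with $\|F-F_k\|_\txz\to 0$ as $k\to\fz$. Set $f_k\ev c_M^{-1}\plm F_k$. Since $F_k\in\txb\st T_2^2(\cx)$, Proposition \ref{p4.2}(i) gives $\plm F_k\in L^2(\cx)$; since $F_k\in\txb\st\txv$, Proposition \ref{p4.2}(iv) gives $\plm F_k\in\vmo$. Thus each $f_k\in\vmo\cap L^2(\cx)$, and combining the reproducing identity above with the boundedness in Proposition \ref{p4.2}(iv) yields
$$\|f-f_k\|_\vmo=c_M^{-1}\|\plm(F-F_k)\|_\vmo\ls\|F-F_k\|_\txz\to 0,$$
which is the required density.

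The main obstacle is the extension of the identity $\plm F=c_Mf$ from $L^2(\cx)$ to all of $\vmo$. Since $f\in\vmo$ need not lie in $L^2(\cx)$, one cannot move $L^*$-derivatives freely across the pairing $\la f,\cdot\ra$; rather, one must express everything through the duality $(\hxx)^*\times\hxx$. Both the Fubini interchange and the functional-calculus identification $\int_0^\fz(t^2L^*)^{2M-1}e^{-2t^2L^*}\az\dt=c'\az$ have to be justified using the molecular estimates of Definition \ref{d3.1} together with the Davies--Gaffney-type decay of Lemma \ref{l2.2}, exactly in the spirit of the proof of the Calder\'on reproducing formula in Proposition \ref{p3.3}.
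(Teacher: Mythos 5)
Your proof follows the same strategy as the paper's: take the tent-space representative $h=\tml f\in\txv$ (Theorem \ref{t3.4}), truncate it to $\txb$-approximants via Lemma \ref{l3.3}, reconstruct $f_k\in L^2(\cx)\cap\vmo$ using Proposition \ref{p4.2}(i),(iv), establish the reproducing identity by pairing against molecules, and conclude via the continuity in Proposition \ref{p4.2}(iv). The one substantive deviation is your choice of $\plm=\pi_{L,M}$ as the reconstruction operator; the paper uses $\ply=\pi_{L,1}$. That choice is not arbitrary: Lemma \ref{l3.2} has exactly $\tlx=t^2L^*e^{-t^2L^*}$ (one power) on the dual side, so after Fubini over the $\txb$-approximants and passing to the limit in $(\tx)^*\times\tx$ and $(\hxx)^*\times\hxx$, one obtains $\la f,\az\ra=\tfrac{C(M)}{C(1)}\la\ply h,\az\ra$ directly, with no new reproducing lemma. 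Your choice of $\pi_{L,M}$ requires instead a Calder\'on formula with $(t^2L^*)^Me^{-t^2L^*}$ on the dual side (normalized by $\int_0^\fz t^{4M}e^{-2t^2}\,dt/t$), which the paper does not record; as you correctly acknowledge, one can derive it by re-running the proof of Proposition \ref{p3.3} with the combined exponent $2M$ in place of $M+1$, but that is extra work the paper avoids. Two minor points: the exponent in your scalar identity should be $2M$, not $2M-1$, since $\plm(\tml f)=C(M)\int_0^\fz(t^2L)^{2M}e^{-2t^2L}f\,dt/t$; and you should verify that $\tmlx\az\in\tx$ for $\pmx$-molecules $\az$ with $\wz M$ large (this plays the role of the paper's appeal to $\tlx\az\in\tx$ from the definition of $\hxx$), which is needed to justify the limit $F_k\to F$ inside the pairing.
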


\begin{proof}
Let $f\in\vmo$ and $M>\mz$. Then by Theorem \ref{t3.4}, we have
$h\ev\tml f \in\txv$. Similarly to the proof of Proposition \ref{p4.2},
by Lemma \ref{l3.3}, there exist $\{h_k\}_{k\in\nn}\st\txb\st\txv$
such that $\|h-h_k\|_\txz\to0$, as $k\to\fz$. Thus, by i) and iv)
of Proposition \ref{p4.2}, we see that $\ply h_k\in L^2(\cx)\cap\vmo$ and
\begin{equation}\label{4.4}
\|\ply(h-h_k)\|_\bmo\ls\|h-h_k\|_\txz\to0,
\end{equation}
as $k\to\fz$.

Let $\az$ be a $\pme$-molecule. Then by the definition of $\hx$, we
know that $e^{-t^2L}\az\in\tx$, which, together with Lemma \ref{l3.2},
the fact that $(\tx)^*=\txz$ and $(\hx)^*=\bmo$, further implies that
\begin{eqnarray*}
\la f,\az\ra&&=C(M)\iint_\xt\tml f(x) \ov{t^2L^*e^{-t^2L^*}\az(x)}\dxt\\
&&=\lim_{k\to\fz} C(M)\iint_\xt h_k(x) \ov{t^2L^*e^{-t^2L^*}\az(x)}\dxt\\
&&=\frac{C(M)}{C_1}\lim_{k\to\fz}\int_\cx(\ply h_k(x)) \ov{\az(x)}\,d\mu(x)
=\frac{C(M)}{C_1}\la\ply h,\az\ra.
\end{eqnarray*}
Since the set of finite combinations of molecules is dense in $\hx$,
we then see that $f=\frac{C(M)}{C_1}\ply h$ in $\bmo$.

Now, for each $k\in\nn$, let $f_k\ev\frac{C(M)}{C_1}\ply h_k$. Then $f_k
\in\vmo\cap{L^2(\cx)}$ and, moreover, by \eqref{4.4},
we have $\|f-f_k\|_\bmo\to0$,
as $k\to\fz$, which completes the proof of Lemma \ref{l4.8}.
\end{proof}

In what follows, the \emph{symbol} $\la\cdot,\cdot\ra$ in the following
theorem means the duality between the space $\bmo$ and the space
$\bxx$ in the sense of Lemma \ref{l4.7} with $L$ and $L^*$ exchanged.

\begin{theorem}\label{t4.2}
Let $L$ satisfy Assumptions $(L)_1$ and $(L)_2$, and $\Phi$ satisfy
Assumption $(\Phi)$. Then the dual space of $\vmo$, $(\vmo)^*$,
coincides with the space $\bxx$ in the following sense:

For any $g\in\bxx$, define the linear functional $\ell$ by setting,
for all $f\in\vmo$,
\begin{equation}\label{4.5}
\ell(f)\ev\la f,g\ra.
\end{equation}
Then there exists a positive constant $C$ independent of $g$ such that
$$\|\ell\|_\vmox\le C\|g\|_\bxx.$$

Conversely, for any $\ell\in\vmox$, there exist $g\in\bxx$ such
that \eqref{4.5} holds and a positive constant $C$,
independent of $\ell$, such that
$$\|g\|_\bxx\le C\|\ell\|_\vmox.$$
\end{theorem}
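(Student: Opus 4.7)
The argument proceeds in two directions, the second requiring the apparatus built up in Sections \ref{s3} and \ref{s4}.

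\emph{The easy direction.} Given $g\in\bxx$, Lemma \ref{l4.7} with the roles of $L$ and $L^*$ interchanged identifies $(\bxx)^*$ with $\bmo$, so $f\mapsto\la f,g\ra$ defines a bounded linear functional on $\bmo$ with norm at most a constant multiple of $\|g\|_\bxx$. Since $\vmo\subset\bmo$ with $\|\cdot\|_\vmo=\|\cdot\|_\bmo$ by Definition \ref{d3.3}, the functional $\ell$ in \eqref{4.5} is bounded on $\vmo$ and satisfies $\|\ell\|_\vmox\ls\|g\|_\bxx$.

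\emph{Constructing $g$ from $\ell$.} Fix $M\in\nn$ with $M>\mz$ and put $\Lambda f\ev\tml f$. Theorem \ref{t3.4} shows that $\Lambda$ embeds $\vmo$ into $\txv$ with $\|\Lambda f\|_\txz\sim\|f\|_\bmo$; injectivity follows from Lemma \ref{l3.2}, so the rule $\wz\ell(\Lambda f)\ev\ell(f)$ unambiguously defines a bounded linear functional on the subspace $\Lambda(\vmo)\st\txv$ of norm at most $C\|\ell\|_\vmox$. Extend $\wz\ell$ to all of $\txv$ by the Hahn--Banach theorem. Theorem \ref{t4.1} then yields $h\in\ttx$ with $\|h\|_\ttx\ls\|\ell\|_\vmox$ such that
$$\wz\ell(F)=\int_\xt F(x,t)\,h(x,t)\dxt\qquad\text{for every }F\in\txv.$$
Applying Proposition \ref{p4.2}(iii) with $L^*$ in place of $L$, we define $g\ev C(M)^{-1}\pi_{L^*,M}(h)\in\bxx$, which obeys $\|g\|_\bxx\ls\|\ell\|_\vmox$.

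\emph{Matching the two pairings.} Using Definition \ref{d4.1}, decompose $h=\sum_j\lz_j a_j$ with $\tx$-atoms $a_j$, each supported in a bounded set $\wh{B_j}\st\xt$, and $\sum_j|\lz_j|\ls\|h\|_\ttx$; then $g=C(M)^{-1}\sum_j\lz_j\pi_{L^*,M}(a_j)$ in $\bxx$, where each $\pi_{L^*,M}(a_j)$ is a constant multiple of a $\pmx$-molecule by Proposition \ref{p4.2}(ii). For $f\in\vmo\cap L^2(\cx)$, the bounded support of $a_j$ permits a Fubini-type exchange combined with the $L^2$ duality $\la f,(t^2L^*)^Me^{-t^2L^*}\cdot\ra=\la\tml f,\cdot\ra$, which yields
$$\la f,\pi_{L^*,M}(a_j)\ra=C(M)\iint_\xt\tml f(x)\,a_j(x,t)\dxt.$$
Summing in $j$, justified by the convergence of $\sum_j\lz_j a_j$ to $h$ in $(\txz)^*$ and the fact that $\Lambda f\in\txz$, we obtain $\la f,g\ra=\int_\xt\Lambda f\cdot h\dxt=\wz\ell(\Lambda f)=\ell(f)$. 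For general $f\in\vmo$, Lemma \ref{l4.8} supplies $\{f_k\}\st\vmo\cap L^2(\cx)$ with $\|f-f_k\|_\bmo\to 0$, and the continuity of both $\ell$ and $\la\cdot,g\ra$ on $\bmo$ (the latter by the easy direction) lets us pass to the limit to conclude $\ell(f)=\la f,g\ra$ on all of $\vmo$.

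\emph{Main obstacle.} The crux is the pairing identification: since $g$ is given only as an element of $\bxx$ through the molecular series inherited from the atomic expansion of $h$, justifying $\la f,g\ra=\int_\xt\Lambda f\cdot h\dxt$ requires commuting the sum with the $\bxx$--$\bmo$ pairing and the $dt/t$ integral defining $\pi_{L^*,M}$ with the $L^2$ pairing on $\cx$. Both commutations rest on the bounded support of $\tx$-atoms together with Proposition \ref{p4.2}(ii), after which the density step via Lemma \ref{l4.8} is routine.
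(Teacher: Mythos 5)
Your proof is correct, but the route you take differs from the paper's in two substantive places; both approaches work, so the comparison is worth recording. The paper does not define a functional only on the image $\Lambda(\vmo)$ and extend by Hahn--Banach. Instead, it observes (Proposition \ref{p4.2}(iv)) that $\ply$ is already a bounded linear operator from all of $\txv$ into $\vmo$, so that $\ell\circ\ply$ is directly a bounded functional on $\txv$, no extension needed; the Calder\'on reproducing formula $f=\frac{C(M)}{C_1}\ply(\tml f)$ from the proof of Lemma \ref{l4.8} then plays the role your injectivity argument plays, recovering $\ell(f)$ from $(\ell\circ\ply)(\tml f)$. This is cleaner in that it avoids the (harmless but nonconstructive) Hahn--Banach step and the attendant nonuniqueness of the extension, and it automatically locates the functional on $\txv$ in a canonical way. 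Your version has the modest advantage of being more self-contained: it exhibits the embedding $\vmo\hookrightarrow\txv$ as the central object and does not require the full strength of Proposition \ref{p4.2}(iv). Second, for the identification $\la f,g\ra=\iint\tml f\cdot h$, the paper approximates $h$ by elements $g_k\in\txb$ (dense in $\ttx$ by Lemma \ref{l4.2}), passes to the limit on each side using dominated convergence and the continuity of $\pi_{L^*,M}:\ttx\to\bxx$ from Proposition \ref{p4.2}(iii), whereas you go term by term through the atomic expansion of $h$ and justify the interchange via the bounded support of individual $\tx$-atoms plus the $(\txz)^*$-convergence of the partial sums tested against $\Lambda f\in\txz$. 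These are essentially the same analytic manoeuvre packaged differently; both hinge on the same two ingredients (bounded support permitting Fubini, and the $\ttx$-boundedness of $\pi_{L^*,M}$). One small cosmetic remark: your final density step should invoke continuity of $\ell$ on $\vmo$ (not on $\bmo$); since $\|\cdot\|_{\vmo}=\|\cdot\|_{\bmo}$ and $f_k\to f$ in $\vmo$, this is exactly what you use, but the phrasing is slightly off.
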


\begin{proof}
By Lemma \ref{l4.7}, we have $(\bxx)^*=\bmo$. Definition \ref{d3.3}
implies that $\vmo\st\bmo$, which further implies that $\bxx\st(\vmo)^*$.

Conversely, let $M>\mz$ and $\ell\in(\vmo)^*$. By Proposition
\ref{p4.2}, $\ply$ is bounded from $\txv$ to $\vmo$, which implies
that $\ell\circ\ply$ is a bounded linear functional on $\txv$. Thus,
by Theorem \ref{t4.1}, there exists $g\in\ttx$ such that for all
$g\in\txv$, $\ell\circ\ply(f)=\la f,g\ra$.

Now, suppose that $f\in\vmo\cap L^2(\cx)$. By Theorem \ref{t3.4}, we conclude
that $\tml f\in\txv$. Moreover, from the proof of Lemma \ref{l4.8}, we deduce
that $f=\frac{C(M)}{C_1}\ply(\tml f)$ in $\bmo$. Thus
\begin{eqnarray}\label{4.6}
\ell(f)&&=\frac{C(M)}{C_1}\ell\circ\ply(\tml f)\\
&&=\frac{C(M)}{C_1}\iint_\xt\tml f(x)g(x,t)\dxt.\noz
\end{eqnarray}
By Lemma \ref{l4.2}, $\txb$ is dense in $\ttx$. Since $g\in\ttx$, we choose
$\{g_k\}_{k\in\nn}\st\txb$ such that $g_k\to g$ in $\ttx$. By iii) of Proposition
\ref{p4.2}, we see that $\pi_{L^*,M}(g)$, $\pi_{L^*,M}(g_k)\in
B_{\Phi,L^*}(\cx)$ and
$$\|\pi_{L^*,M}(g-g_k)\|_{B_{\Phi,L^*}(\cx)}\ls\|g-g_k\|_\ttx\to0,$$
as $k\to\fz$. This, together with \eqref{4.6}, Theorem \ref{t4.1},
the dominated convergence theorem and Lemma \ref{l4.7}, implies that
\begin{eqnarray}\label{4.7}
\ell(f)&&=\frac{C(M)}{C_1}\lim_{k\to\fz}\iint_\xt\tml f(x)g_k(x,t)\dxt\\
&&=\frac{C(M)}{C_1}\lim_{k\to\fz}\int_\cx f(x)\int_0^\fz\tmlx
(g_k(\cdot,t))(x)\dt\,d\mu(x)\noz\\
&&=
\frac1{C_1}\lim_{k\to\fz}\la f,\pi_{L^*,M}(g_k)\ra
=\frac1{C_1}\la f,\pi_{L^*,M}(g)\ra.\noz
\end{eqnarray}
Since $\vmo\cap{L^2(\cx)}$ is dense in $\vmo$, we finally conclude
that \eqref{4.7} holds for all $f\in\vmo$, and
$\|\ell\|_\vmox=\frac1{C_1}\|\pi_{L^*,M}g\|_\bxx$. In this sense, we
have $\vmox\st\bxx$, which completes the proof of Theorem
\ref{t4.2}.
\end{proof}

\noindent{\bf Acknowledgements.} The authors would like to
thank Dr. Bui The Anh and Dr. Renjin Jiang
for some helpful discussions on the subject of this paper
and, especially, for Dr. Bui The Anh to give us his preprint \cite{a10}.
The authors would also like to thank the referee for his/her several
valuable remarks which made this article more readable.

\bigskip

Yiyu Liang, Dachun Yang  and Wen Yuan (Corresponding author)

\medskip

School of Mathematical Sciences, Beijing Normal University,
Laboratory of Mathematics and Complex Systems, Ministry of
Education, Beijing 100875, People's Republic of China

\smallskip

{\it E-mails}: \texttt{yyliang@mail.bnu.edu.cn} (Y. Liang)

\hspace{1.55cm}\texttt{dcyang@bnu.edu.cn} (D. Yang)

\hspace{1.55cm}\texttt{wenyuan@bnu.edu.cn} (W. Yuan)

\begin{thebibliography}{99}

\bibitem{a10} B. T. Anh, Functions of vanishing mean oscillation
associated to non-negative self-adjoint operators satisfying
Davies-Gaffney estimates, Submitted.

\vspace{-0.3cm}

\bibitem{al11} B. T. Anh and J. Li,
Orlicz-Hardy Spaces associated to operators satisfying bounded
$H_{\fz}$ functional calculus and Davies-Gaffney estimates, J. Math.
Anal. Appl. 373 (2011), 485-501.

\vspace{-0.3cm}

\bibitem{ao} T. Aoki,
Locally bounded linear topological space,
Proc. Imp. Acad. Tokyo 18 (1942), 588-594.

\vspace{-0.3cm}

\bibitem{adm} P. Auscher, X. T. Duong and A. McIntosh,
Boundedness of Banach space valued singular integral operators and
Hardy spaces, Unpublished manuscript, 2005.

\vspace{-0.3cm}

\bibitem{b02} G. Bourdaud, Remarques sur certains sous-espaces de
${\mathrm {BMO}}(\mathbb R^n)$ et de ${\mathrm {bmo}}(\mathbb R^n)$, Ann. Inst.
Fourier (Grenoble) 52 (2002), 1187-1218.

\vspace{-0.3cm}

\bibitem{cms85} R. R. Coifman, Y. Meyer and E. M. Stein,
Some new function spaces and their applications  to harmonic
analysis, J. Funct. Anal. 62 (1985), 304-335.

\vspace{-0.3cm}

\bibitem{cw71} R. R. Coifman and G. Weiss, Analyse harmonique
non-commutative sur certains espaces homog\'enes,
Lecture Notes in Math. 242, Springer, Berlin, 1971.

\vspace{-0.3cm}

\bibitem{cw77} R. R. Coifman and G. Weiss, Extensions of Hardy spaces
and their use in analysis, Bull. Amer. Math. Soc. 83 (1977),
569-645.

\vspace{-0.3cm}

\bibitem{ddsty} D. Deng, X. T. Duong, L. Song, C. Tan and L. Yan,
Functions of vanishing mean osillation associated with operators
and applications, Michigan Math. J. 56 (2008), 529-550.

\vspace{-0.3cm}

\bibitem{dl} X. T. Duong and J. Li, Hardy spaces associated to operators satisfying
bounded $H_{\fz}$ functional calculus and Davis-Gaffney estimates,
Preprint.

\vspace{-0.3cm}

\bibitem{dxy07} X. T. Duong, J. Xiao and L. Yan, Old and new Morrey spaces
with heat kernel bounds, J. Fourier Anal. Appl. 12 (2007), 87-111.

\vspace{-0.3cm}

\bibitem{dy05a} X. T. Duong and L. Yan, Duality of Hardy and BMO spaces
associated with operators with heat kernel bounds, J. Amer. Math.
Soc. 18 (2005), 943-973.

\vspace{-0.3cm}

\bibitem{dy05b} X. T. Duong and L. Yan, New function spaces of BMO type, the
John-Nirenberg inequality, interpolation, and applications, Comm.
Pure Appl. Math. 58 (2005), 1375-1420.

\vspace{-0.3cm}

\bibitem{fs72} C. Fefferman and E. M. Stein, $H^p$ spaces of several
variables, Acta Math. 129 (1972), 137-193.

\vspace{-0.3cm}

\bibitem{hlmmy} S. Hofmann, G. Lu, D. Mitrea, M. Mitrea and L. Yan,
Hardy spaces associated to non-negative self-adjoint operators
satisfying Davies-Gaffney estimates, Mem. Amer. Math. Soc. (to
appear).

\vspace{-0.3cm}

\bibitem{hm09} S. Hofmann and S. Mayboroda,
Hardy and BMO spaces associated to divergence form elliptic
operators, Math. Ann. 344 (2009), 37-116.

\vspace{-0.3cm}

\bibitem{hm09c} S. Hofmann and S. Mayboroda,
Correction to ``Hardy and BMO spaces associated to divergence
form elliptic operators", arXiv: 0907.0129.

\vspace{-0.3cm}

\bibitem{j80} S. Janson, Generalizations of Lipschitz spaces and
an application to Hardy spaces and bounded mean oscillation, Duke
Math. J. 47 (1980), 959-982.

\vspace{-0.3cm}

\bibitem{jy10} R. Jiang and D. Yang, New Orlicz-Hardy spaces associated
 with divergence form elliptic operators, J. Funct.
 Anal. 258 (2010), 1167-1224.

\vspace{-0.3cm}

\bibitem{jy10a}R. Jiang and D. Yang, Generalized vanishing mean oscillation spaces
associated with divergence form elliptic operators, Integral
Equations Operator Theory 67 (2010), 123-149.

\vspace{-0.3cm}

\bibitem{jy} R. Jiang and D. Yang, Orlicz-Hardy spaces associated with operators
satisfying Davies-Gaffney estimates, Commun. Contemp. Math. 13 (2011), 331-373.

\vspace{-0.3cm}

\bibitem{jya} R. Jiang and D. Yang,
Predual spaces of Banach completions of Orlicz-Hardy spaces
associated with operators, J. Fourier Anal. Appl.
17 (2011), 1-35.

\vspace{-0.3cm}

\bibitem{jyz09} R. Jiang, D. Yang and Y. Zhou, Orlicz-Hardy spaces
associated with operators, Sci. China Ser. A 52 (2009), 1042-1080.

\vspace{-0.3cm}

\bibitem{jn61} F. John and L. Nirenberg, On functions of bounded mean
oscillation, Commun. Pure Appl. Math. 14 (1961), 415-426.

\vspace{-0.3cm}

\bibitem{m86} A. McIntosh, Operators which have an $H_\infty$ functional
calculus, Miniconference on operator theory and partial differential
equations (Canberra), Proc. Centre Math. Anal. Austral. Nat. Univ.,
Vol. 14, Austral. Nat. Univ., Canberra, 1986, pp. 210-231.

\vspace{-0.3cm}

\bibitem{Ro57}
S. Rolewicz, On a certain class of linear metric spaces,
Bull. Acad. Polon. Sci. 5 (1957), 471-473.

\vspace{-0.3cm}

\bibitem{r07} E. Russ, The atomic decomposition for tent spaces
on spaces of homogeneous type, CMA/AMSI Research
Symposium ``Asymptotic Geometric Analysis,
Harmonic Analysis, and Related Topics'', 125-135,
Proc. Centre Math. Appl. Austral. Nat. Univ., 42,
Austral. Nat. Univ., Canberra, 2007.

\vspace{-0.3cm}

\bibitem{s75} D. Sarason, Functions of vanishing mean oscillation,
Trans. Amer. Math. Soc. 207 (1975), 391-405

\vspace{-0.3cm}

\bibitem{sx} L. Song and M. Xu, $\mathop\mathrm{VMO}$ spaces associated
with divergence form elliptic operators, Math. Z.
(DOI 10.1007/s00209-010-0774-6).

\vspace{-0.3cm}

\bibitem{vi87} B. E. Viviani, An atomic decomposition
of the predual of ${\mathrm {BMO}}(\rho)$, Rev. Mat. Iberoamericana 3
(1987), 401-425.

\end{thebibliography}
\end{document}